\providecommand{\leftsquigarrow}{%
  \mathrel{\mathpalette\reflect@squig\relax}%
}
\newcommand{\reflect@squig}[2]{%
  \reflectbox{$\m@th#1\rightsquigarrow$}%
}
\newcommand{\ydcolor}{\color{blue}}
\newcommand{\ydprefix}{YD: }
\newcommand{\ydnote}[1]{{\ydcolor \ydprefix #1 }}
\newcommand{\val}[1]{[\![{#1}]\!]}
\newcommand{\descr}[1]{(\![{#1}]\!)}
\newcommand{\atprop}{\mathcal{V}}
\begin{document}

\begin{frontmatter}
  \title{Toward the van Benthem Characterization Theorem for Non-Distributive Modal Logic}\thanks{This project has received funding from the European Union’s Horizon 2020 research and innovation programme under the Marie Skłodowska-Curie grant agreement No 101007627. The research of Krishna Manoorkar is supported by the NWO grant KIVI.2019.001. Yiwen Ding and Ruoding Wang are supported by the China Scholarship Council.}
  \author{Yiwen Ding$^{\,a}$}
  \author{Krishna Manoorkar$^{\,a}$}
  \author{Mattia Panettiere$^{\,a}$}
  \author{Ruoding Wang$^{\,a, \,b}$}
  \address[1]{Vrije Universiteit Amsterdam, The Netherlands}
  \address[3]{Xiamen University, China}

  \begin{abstract}
 In this paper, we introduce the simulations and bisimulations on polarity-based semantics for non-distributive modal logic, which are natural generalizations of those notions on Kripke semantics for modal logic. We also generalize other important model-theoretic notions about Kripke semantics such as image-finite models, modally-saturated models, ultrafilter extension and ultrapower extension to the non-distributive setting. By using these generalizations, we prove the Hennessy-Milner theorem and the van Benthem characterization theorem for non-distributive modal logic based on polarity-based semantics.
  \end{abstract}

  \begin{keyword}
 Polarity-based semantics, Non-distributive modal logics, Simulations, Bisimulations, Hennessy-Milner theorem, van Benthem characterization theorem.
  \end{keyword}
 \end{frontmatter}

\section{Introduction}\label{sec:introduction}

This paper focuses on advancing the model theory of polarity-based semantics for {\em normal non-distributive modal logics} \cite{frittella2020toward,conradie2021rough,conradie2020non}. These logics, also known as {\em LE-logics} (lattice expansion logics), are characterized by algebraic semantics provided by arbitrary lattices expanded with normal operators. Leveraging the interpretation of polarities as formal contexts in Formal Concept Analysis \cite{ganter1997applied}, non-distributive modal logic is conceptualized as a logic of formal concepts or categories enriched with modal operators, such as approximation or knowledge operators \cite{conradie2021rough,conradie2017toward}. Several model theoretic notions for polarity-based semantics like  {\em disjoint unions}, {\em p-morphisms}, and {\em ultrafilter extensions} were introduced in prior work \cite{conradie2018goldblatt}. These concepts were  used in \cite{conradie2018goldblatt} to prove  {\em Goldblatt-Thomason theorem} for LE-logics. Simulations on polarity-based models were defined  and  employed to demonstrate the preservation of lattice-based modal $\mu$-calculus (an extension of basic LE-logic with the least and the greatest fixed point operators) under simulations \cite[Section 6]{ding2023game}. In this  paper,  we delve more  into    the problem of generalizing simulations and  bisimulations from Kripke semantics to polarity-based semantics, and also prove fundamental results like Hennessy-Milner theorem and van Benthem characterization theorem for polarity-based semantics.

Bisimulations were initially defined by van Benthem \cite{van2014modal} as p-relations on Kripke frames for classical modal logic. They aim to characterize points in Kripke models with equivalent behavior concerning modal logic formulas. Since their inception, bisimulations have played an important role in modal logic theory and its applications in computer science \cite{van2014modal,ponse1995modal,hennessy1995symbolic,goranko20075}. Hennessy and Milner \cite{hennessy1985algebraic} showed that on image-finite Kripke models, the semantic equivalence relation for modal logic forms a bisimulation relation, a result later extended to modally-saturated Kripke models \cite{hollenberg1994hennessy}. Van Benthem \cite{van2014modal} established the characterization theorem for modal logic, showing that classical modal logic is the bisimulation-invariant fragment of first-order logic.

Bisimulations have been extensively defined and studied across various semantic models for different logics. For instance, in topological semantics for (S4) modal logic, they are referred to as topo-bisimulations \cite{aiello2003reasoning}, where the Hennessy-Milner property for a class of topological models was established \cite{DAVOREN2009349}. Similarly, in Kripke semantics for intuitionistic modal logic, they are known as asimulations, and results like the Hennessy-Milner theorem and the van Benthem theorem have been demonstrated \cite{olkhovikov2013model}. Bezhanishvilli and Henke \cite{bezhanishvili2020model} proved the van Benthem theorem for models based on general descriptive frames using {\em Vietoris bisimulations} \cite{bezhanishvili2010vietoris}. Similar studies have been carried out for  different semantics of graded modal logics, dynamic modal logics, fuzzy modal logics, monotone modal logics, etc. \cite{de2000note,stankovic2023simulations,fuzzy_bisimul_character,dynamic_bisimulation,hansen2009neighbourhood}. De Groot \cite{de2024non} established a van Benthem theorem for the meet-semilattice semantics of non-distributive (positive) logic using the notion of meet simulations. In this work, we adopt a similar approach to prove the van Benthem theorem for the polarity-based semantics of normal non-distributive modal logic by defining appropriate notion of simulations. Along the way, we introduce essential model-theoretic notions like image-finite models, modally-saturated models, and $\omega$-saturated models for this semantics, and prove significant results like the Hennessy-Milner theorem and the Hennessy-Milner property for the class of modally-saturated models.

{\em Structure of the paper.} In Section \ref{sec:prelim}, we collect the required  preliminaries. In Section \ref{sec:bisimulations}, we define the simulations and bisimulations on the polarity-based semantics and prove the Hennessy-Milner theorem. In Section \ref{sec:Modal Saturation via Filter-Ideal Extensions}, we define the modally-saturated polarity-based models, filter-ideal extensions of polarity-based models and show that two pointed polarity-based models are modally equivalent if and only if their filter-ideal extensions are bisimilar. In Section \ref{sec:Toward van Benthem Characterization Theorem}, we define the standard translation and prove the van Benthem characterization theorem for polarity-based semantics of non-distributive modal logic. In Section \ref{sec:Conclusion}, we summarize the main findings of the paper and suggest some possible future works.

\section{Preliminary}\label{sec:prelim} 
In this section, we gather  useful preliminaries about normal non-distributive modal logic and its polarity-based semantics based on \cite{conradie2020non} and \cite{conradie2017toward}. We assume readers are familiar with the basic concepts from the model theory of classical  modal logic such as simulations, bisimulations, ultrafilter extensions, standard translations, Hennessy-Milner theorem, and van Benthem characterization theorem. For a detailed discussion on the model theory of modal logic, we refer to \cite[Section 2]{Blackburn_Rijke_Venema_2001}. 

\subsection{Non-Distributive Modal Logic}\label{ssec:Non-distributive modal logic}
Let $\atprop$ be a countable set of propositional variables. The language $\mathcal{L}$ (i.e. set of formulas) is defined as follows:

{{
\centering
$\varphi ::= p \mid  \bot \mid \top \mid \varphi \wedge \varphi \mid \varphi \vee \varphi \mid \Box \varphi \mid \Diamond \varphi$,
\par
}}
   
\noindent
where $p\in \atprop$. Given any $\varphi,\psi\in\mathcal{L}$, $\varphi\vdash\psi$ is an {\em $\mathcal{L}$-sequent}. A {\em normal non-distributive modal logic} is a set of $\mathcal{L}$-sequents containing the following axioms:

{{\centering 
$p \vdash p,
\ \
p \vdash \top
\ \
\bot \vdash p,
\ \
p \vdash p \vee q,
\ \
q \vdash p \vee q,
\ \
p \wedge q \vdash p,
\ \
p \wedge q \vdash q,
\ $
\\
$\top \vdash \Box\top,
\ \
\Diamond\bot \vdash \bot,
\ \
\Box p \wedge \Box q \vdash \Box(p \wedge q),
\ \
\Diamond(p \vee q) \vdash \Diamond p \vee \Diamond q
\ $
\par}}
\smallskip

\noindent and closed under the following inference rules:

\smallskip
{{
\centering
$\frac{\varphi\vdash \chi\quad \chi\vdash \psi}{\varphi\vdash \psi}
\ \ 
\frac{\varphi\vdash \psi}{\varphi\left(\chi/p\right)\vdash\psi\left(\chi/p\right)}
\ \ 
\frac{\chi\vdash\varphi\quad \chi\vdash\psi}{\chi\vdash \varphi\wedge\psi}
\ \ 
\frac{\varphi\vdash\chi\quad \psi\vdash\chi}{\varphi\vee\psi\vdash\chi}
\ \ 
\frac{\varphi\vdash\psi}{\Box \varphi\vdash \Box \psi}
\ \ 
\frac{\varphi\vdash\psi}{\Diamond \varphi\vdash \Diamond \psi}$
\par
}}
\smallskip 

The smallest such set $\mathbb{L}$ is called the {\em basic} normal non-distributive modal logic. For any sequent $\varphi\vdash\psi$ in $\mathbb{L}$ we denote it as $\varphi\vdash_{\mathbb{L}}\psi$.

\subsection{Polarity-Based Semantics}

Given any relation $R\subseteq A \times X$ and any $B \subseteq A$, $Y \subseteq X$, the maps $R^{(1)}: \mathcal{P}(A)\rightarrow\mathcal{P}(X)$ and $R^{(0)}:\mathcal{P}(X)\rightarrow\mathcal{P}(A)$ are defined as $R^{(1)}[B] := \{x \in X \mid \forall b (b \in B \Rightarrow b R x)\}$ and $R^{(0)}[Y] := \{a \in A \mid \forall y (y \in Y \Rightarrow a R y)\}$.

\begin{definition}
A {\em polarity} is a tuple $\mathfrak{P}=(A,X,I)$ such that $A$ and $X$ are sets and $I\subseteq A\times X$ is a binary relation. Given a polarity $\mathfrak{P}=(A,X,I)$, the maps $(\cdot)^\uparrow :\mathcal{P}(A)\rightarrow\mathcal{P}(X)$ and $(\cdot)^\downarrow :\mathcal{P}(X)\rightarrow\mathcal{P}(A)$ are defined as follows:

{{
\centering
 $B^\uparrow := I^{(1)}[B]$ and $Y^\downarrow := I^{(0)}[Y] $, where $B\subseteq A$ and $Y\subseteq X$.
 \par
}}
\noindent
 The maps $(\cdot)^\uparrow$ and $(\cdot)^\downarrow$ form a {\em Galois connection} between posets $(\mathcal{P}(A),\subseteq)$ and $(\mathcal{P}(X),\subseteq)$, that is $Y\subseteq B^\uparrow$ iff $B\subseteq Y^\downarrow$ for all $B\in \mathcal{P}(A)$ and $Y\in \mathcal{P}(X)$.
\end{definition}

\begin{definition}
    A {\em formal concept} of the polarity $\mathfrak{P}=(A,X,I)$ is a pair $(\val{c},\descr{c})$ such that $\val{c}\subseteq A$, $\descr{c}\subseteq X$, and $\val{c}^\uparrow = \descr{c}$, $\descr{c}^\downarrow = \val{c}$. It follows that $\val{c}$ and $\descr{c}$ are {\em Galois-stable}, i.e.\ $\val{c}^{\uparrow\downarrow} = \val{c}$ and $\descr{c}^{\downarrow\uparrow} = \descr{c}$. The set $\mathcal{C}(\mathfrak{P})$ of all the formal concepts of $\mathfrak{P}$ can be partially ordered as follows: for any $c,d\in \mathcal{C}(\mathfrak{P})$, $c\leq d$ iff $\val{c}\subseteq \val{d}$ iff $\descr{d}\subseteq \descr{c}$.
    The poset $\mathfrak{P}^+=(\mathcal{C}(\mathfrak{P}), \leq)$ is a complete lattice -- called the {\em concept lattice} of $\mathfrak{P}$ -- such that for any $\mathrm{K}\subseteq \mathcal{C}(\mathfrak{P})$,
    
    {{
    \centering
    $\bigwedge\mathrm{K} = (\bigcap\{\val{c}\mid c\in\mathrm{K}\},(\bigcap\{\val{c}\mid c\in\mathrm{K}\})^\uparrow)$,
    
    $\bigvee\mathrm{K} = ((\bigcap\{\descr{c}\mid c\in\mathrm{K}\})^\downarrow,\bigcap\{\descr{c}\mid c\in\mathrm{K}\})$.
    \par 
    }}
\end{definition}

\begin{proposition}\label{prop:concept lattice}
For any polarity $\mathfrak{P} = (A,X,I)$, the concept lattice $\mathfrak{P}^+$ is completely join-generated by the set $\{\textbf{a} := (a^{\uparrow\downarrow},a^\uparrow)\mid a\in A\}$, and is completely meet-generated by the set $\{\textbf{y} := (y^\downarrow,y^{\downarrow\uparrow})\mid y\in Y\}$.
\end{proposition}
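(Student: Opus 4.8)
The plan is to derive the statement from the standard Galois-connection identities together with the explicit join and meet formulas for $\mathfrak{P}^+$ already recorded in the definition of the concept lattice. First I would collect the auxiliary facts used throughout: the maps $(\cdot)^\uparrow$ and $(\cdot)^\downarrow$ are order-reversing, satisfy $B\subseteq B^{\uparrow\downarrow}$ and $Y\subseteq Y^{\downarrow\uparrow}$, and, being the two halves of a Galois connection, send unions to intersections, i.e.\ $\big(\bigcup_i B_i\big)^\uparrow=\bigcap_i B_i^\uparrow$ and dually for $(\cdot)^\downarrow$; one also has the stability identities $B^{\uparrow\downarrow\uparrow}=B^\uparrow$ and $Y^{\downarrow\uparrow\downarrow}=Y^\downarrow$. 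All of these follow immediately from the definitions of $I^{(1)},I^{(0)}$ and the Galois connection already stated for a polarity.

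The second step is to check that the proposed generators are genuinely formal concepts. For $a\in A$ one must verify $(a^{\uparrow\downarrow})^\uparrow=a^\uparrow$ and $(a^\uparrow)^\downarrow=a^{\uparrow\downarrow}$: the first is exactly the stability identity $\{a\}^{\uparrow\downarrow\uparrow}=\{a\}^\uparrow$, and the second is the definition of $a^{\uparrow\downarrow}$, so $\textbf{a}\in\mathcal{C}(\mathfrak{P})$; the verification that $\textbf{y}\in\mathcal{C}(\mathfrak{P})$ for $y\in X$ is entirely dual.

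The heart of the argument is to show that an arbitrary concept $c=(\val{c},\descr{c})$ is recovered as the join of the generators lying below it, which is precisely what complete join-generation asserts. I would first identify those generators: since $a\in a^{\uparrow\downarrow}$, since $\val{c}$ is Galois-stable, and by monotonicity of $(\cdot)^{\uparrow\downarrow}$, one gets $\textbf{a}\le c\iff a^{\uparrow\downarrow}\subseteq\val{c}\iff a\in\val{c}$. Applying the join formula to $\mathrm{K}=\{\textbf{a}\mid a\in\val{c}\}$, whose elements have description $a^\uparrow=\{a\}^\uparrow$, the union-to-intersection identity gives
\[
\bigcap\{\descr{\textbf{a}}\mid a\in\val{c}\}=\bigcap_{a\in\val{c}}\{a\}^\uparrow=\Big(\bigcup_{a\in\val{c}}\{a\}\Big)^\uparrow=\val{c}^\uparrow=\descr{c}.
\]
Substituting into the join formula yields $\bigvee\mathrm{K}=(\descr{c}^{\downarrow},\descr{c})=(\val{c},\descr{c})=c$, using $\descr{c}^\downarrow=\val{c}$. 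The meet-generation claim is the order-dual computation: $\textbf{y}\ge c\iff y^{\downarrow\uparrow}\subseteq\descr{c}\iff y\in\descr{c}$, and feeding $\mathrm{K}=\{\textbf{y}\mid y\in\descr{c}\}$ into the meet formula gives $\bigcap_{y\in\descr{c}}y^\downarrow=\descr{c}^\downarrow=\val{c}$, whence $\bigwedge\mathrm{K}=(\val{c},\val{c}^\uparrow)=c$.

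I do not expect a serious obstacle here, as the result is essentially the basic theorem of Formal Concept Analysis; the only point requiring care is the correct use of the identity converting unions of singletons into the Galois map of the union, and reading off the right coordinate of the join/meet formulas, so that the description coordinate of $\bigvee\mathrm{K}$ (resp.\ the value coordinate of $\bigwedge\mathrm{K}$) is exactly $\descr{c}$ (resp.\ $\val{c}$).
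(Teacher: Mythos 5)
Your proof is correct, and it is the canonical argument: identify the generators below (resp.\ above) a concept $c$ via $\textbf{a}\le c\iff a\in\val{c}$ and $c\le\textbf{y}\iff y\in\descr{c}$, then feed these into the explicit join/meet formulas and use the union-to-intersection identity of the Galois maps. The paper itself states this proposition without proof, treating it as the standard basic theorem of Formal Concept Analysis, so there is nothing to diverge from; the only point worth noting is that you implicitly corrected the paper's typo by taking $y\in X$ rather than the undefined $y\in Y$ in the meet-generating set.
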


\begin{theorem}[Birkhoff's representation theorem]
Any complete lattice $\mathbf{L}$ is isomorphic to the concept lattice $\mathfrak{P}^+$ of some polarity $\mathfrak{P}$.
\end{theorem}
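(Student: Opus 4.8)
The plan is to exhibit a concrete polarity built from $\mathbf{L}$ itself and to show that the canonical map of Proposition~\ref{prop:concept lattice} into its concept lattice is an isomorphism. Write $L$ for the underlying set of $\mathbf{L}$ and take $\mathfrak{P} = (L, L, \leq)$, i.e.\ both sides of the polarity are copies of $L$ and the relation $I$ is the lattice order $\leq$. First I would unwind the induced Galois connection on this polarity: for $a \in L$ one checks directly that $a^\uparrow = \{x \mid a \leq x\} = {\uparrow}a$ is the principal up-set and $a^{\uparrow\downarrow} = {\downarrow}a$ is the principal down-set, since $a \in {\uparrow}a$ forces every lower bound of ${\uparrow}a$ to lie below $a$, while conversely anything below $a$ is below every upper bound of $a$. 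Hence the join-generator $\mathbf{a}$ from Proposition~\ref{prop:concept lattice} is the pair $({\downarrow}a, {\uparrow}a)$, and in particular each $\mathbf{a}$ is a well-defined formal concept of $\mathfrak{P}$.

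Next I would define $\phi : \mathbf{L} \to \mathfrak{P}^+$ by $\phi(a) = \mathbf{a} = ({\downarrow}a, {\uparrow}a)$ and argue that it is an order-embedding. By the definition of the order on $\mathcal{C}(\mathfrak{P})$ we have $\phi(a) \leq \phi(b)$ iff $\val{\mathbf{a}} \subseteq \val{\mathbf{b}}$, that is iff ${\downarrow}a \subseteq {\downarrow}b$, which holds precisely when $a \leq b$. Thus $\phi$ is both order-preserving and order-reflecting, and in particular injective.

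The crux is surjectivity, and this is exactly where the completeness of $\mathbf{L}$ enters. Given an arbitrary formal concept $c = (\val{c}, \descr{c})$, I would set $a := \bigvee \val{c}$, which exists because $\mathbf{L}$ is complete. By definition $\descr{c} = \val{c}^\uparrow$ is the set of upper bounds of $\val{c}$, whose least element is $a$; consequently the lower bounds of $\descr{c}$ are exactly the elements below $a$, giving $\val{c} = \val{c}^{\uparrow\downarrow} = {\downarrow}a$ (using that $c$ is Galois-stable). Hence $c = \mathbf{a} = \phi(a)$, so $\phi$ is onto. Since an order-isomorphism between complete lattices automatically preserves all joins and meets, $\phi$ is a lattice isomorphism and $\mathbf{L} \cong \mathfrak{P}^+$.

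I expect the surjectivity step to be the main obstacle, since it requires correctly identifying the Galois-stable subsets of $L$ with the principal down-sets. This identification is precisely what fails without completeness: a non-complete lattice can carry a stable set admitting no join, so no element $a$ would witness $c = \mathbf{a}$. This is the structural reason the theorem is stated for \emph{complete} lattices. Everything else reduces to routine verification within the Galois-connection calculus introduced above.
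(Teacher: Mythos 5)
Your proof is correct, and it is exactly the standard argument: the paper states this theorem without proof (it is the classical Birkhoff--Wille representation result underlying Formal Concept Analysis), and the canonical proof proceeds precisely as you do, via the polarity $(L, L, \leq)$ and the map $a \mapsto ({\downarrow}a, {\uparrow}a)$, with completeness entering only at the surjectivity step through $a := \bigvee \val{c}$. All of your intermediate verifications (that $\mathbf{a} = ({\downarrow}a,{\uparrow}a)$ is a concept, that the map is an order-embedding, and that Galois stability forces $\val{c} = {\downarrow}\bigvee\val{c}$) are accurate.
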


\begin{definition}
    An {\em $\mathrm{LE}$-frame} is a tuple $\mathfrak{F} = (\mathfrak{P}, R_\Diamond, R_\Box)$, where $\mathfrak{P}=(A,X,I)$ is a polarity, and $R_\Diamond\subseteq X\times A$ and  $R_\Box\subseteq A\times X$ are {\em I-compatible} relations, i.e., for all $a\in A$ and $x\in X$, $R^{(0)}_\Diamond [a]$ (resp. $R^{(0)}_\Box [x]$) and $R^{(1)}_\Box [a]$ (resp. $R^{(1)}_\Diamond [x]$) are Galois-stable.
\end{definition}

\begin{definition}
    For any $\mathrm{LE}$-frame $\mathfrak{F}=(\mathfrak{P}, R_\Diamond, R_\Box)$, the {\em complex algebra} of $\mathfrak{F}$ is $\mathfrak{F}^+:=(\mathfrak{P}^+, \Diamond^{\mathfrak{P}^+}, \Box^{\mathfrak{P}^+})$ where $\mathfrak{P}^+$ is the concept lattice of $\mathfrak{P}$, $\Diamond^{\mathfrak{P}^+}$ and $\Box^{\mathfrak{P}^+}$ are unary operators on $\mathfrak{P}^+$ defined as follows: for every $c\in \mathfrak{P}^+$,
    \smallskip
    
    {{\centering
    $\Diamond^{\mathfrak{P}^+}(c) := (R^{(0)}_\Diamond [\val{c}]^\downarrow, R^{(0)}_\Diamond [\val{c}])$\quad and \quad
    $\Box^{\mathfrak{P}^+}(c) := (R^{(0)}_\Box [\descr{c}], R^{(0)}_\Box [\descr{c}]^\uparrow)$.
    \par}}
    \smallskip
    
\noindent Note that $\Diamond^{\mathfrak{P}^+}$ (resp.\ $\Box^{\mathfrak{P}^+}$) is completely join (resp.\ meet) preserving.
\end{definition}
\begin{definition}
A {\em valuation} on an $\mathrm{LE}$-frame $\mathfrak{F} = (\mathfrak{P}, R_{\Diamond}, R_{\Box})$ is a map $V: \atprop \rightarrow\mathfrak{P}^+$. For each $p\in\atprop$, we let $\val{p}: = \val{V(p)}$ (resp.~$\descr{p}: = \descr{V(p)}$) denote the extension (resp.~intension) of the interpretation of $p$ under $V$. A valuation can be homomorphically extended to an unique valuation $\overline V: \mathcal{L} \rightarrow\mathfrak{P}^+$ on all the $\mathcal{L}$-formulas. An {\em $\mathrm{LE}$-model} is a pair $(\mathfrak{F}, V)$, where $\mathfrak{F}$ is an $\mathrm{LE}$-frame, and $V$ is a valuation on it.

\end{definition}

\begin{definition}
For any $\mathrm{LE}$-model $\mathfrak{M} = (\mathfrak{F}, V)$, the {\em modal satisfaction} relations $\Vdash$ and $\succ$ are defined inductively as follows:

\smallskip
{{\centering 
\footnotesize{
\begin{tabular}{l@{\hspace{1em}}l@{\hspace{1.2em}}l@{\hspace{1em}}l}
$\mathfrak{M}, a \Vdash p$ & iff $a\in \val{p}_{\mathfrak{M}}$ &
$\mathfrak{M}, x \succ p$ & iff $x\in \descr{p}_{\mathfrak{M}}$ \\
$\mathfrak{M}, a \Vdash\top$ & always &
$\mathfrak{M}, x \succ \top$ & iff   $(\forall a\in A) aIx$\\
$\mathfrak{M}, x \succ  \bot$ & always &
$\mathfrak{M}, a \Vdash \bot $ & iff $(\forall x\in X) aIx$ \\
$\mathfrak{M}, a \Vdash \varphi\wedge \psi$ & iff $\mathfrak{M}, a \Vdash \varphi$ and $\mathfrak{M}, a \Vdash  \psi$ & 
$\mathfrak{M}, x \succ \varphi\wedge \psi$ & iff $(\forall a\in A)$ $(\mathfrak{M}, a \Vdash \varphi\wedge \psi \Rightarrow a I x)$
\\
$\mathfrak{M}, x \succ \varphi\vee \psi$ & iff  $\mathfrak{M}, x \succ \varphi$ and $\mathfrak{M}, x \succ  \psi$ & 
$\mathfrak{M}, a \Vdash \varphi\vee \psi$ & iff $(\forall x\in X)$ $(\mathfrak{M}, x \succ \varphi\vee \psi \Rightarrow a I x)$\\
$\mathfrak{M}, a \Vdash \Box\varphi$ &  iff $(\forall x\in X)(\mathfrak{M}, x \succ \varphi \Rightarrow a R_\Box x)$ &
$\mathfrak{M}, x \succ \Box\varphi$ &  iff $(\forall a\in A)(\mathfrak{M}, a \Vdash \Box\varphi \Rightarrow a I x)$\\
$\mathfrak{M}, x \succ \Diamond\varphi$ &  iff $(\forall a\in A)(\mathfrak{M}, a \Vdash \varphi\Rightarrow xR_\Diamond a)$ &
$\mathfrak{M}, a \Vdash \Diamond\varphi$ & iff $(\forall x\in X)(\mathfrak{M}, x \succ \Diamond\varphi \Rightarrow a I x)$ \\
\end{tabular}
}
\par}}
\smallskip
Note that unlike the classical modal logic, the modal operators $\Box$ and $\Diamond$ are not inter-definable in LE-logic. 
For any $\mathcal{L}$-sequent $\varphi \vdash \psi$, and an $\mathrm{LE}$-model $\mathfrak{M} = (\mathfrak{F},V)$, $\mathfrak{M} \vDash \varphi \vdash \psi$ iff $\val{V(\varphi)} \subseteq \val{V(\psi)} $ iff $\descr{V(\psi)} \subseteq \descr{V(\varphi)}$. 

\end{definition}
\noindent The following theorem states that the basic normal non-distributive modal logic defined in Section \ref{ssec:Non-distributive modal logic} is sound and complete w.r.t. the class of $\mathrm{LE}$-models.

\begin{theorem}[Proposition 3, \cite{conradie2017toward}]
For any unprovable $\mathcal{L}$-sequent $\varphi\vdash\psi$ in the non-distributive modal logic, there is an $\mathrm{LE}$-model $\mathfrak{M}$ such that $\mathfrak{M}\nvDash\varphi\vdash\psi$.
\end{theorem}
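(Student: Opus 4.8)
The plan is to reduce the statement to algebraic completeness together with the dual representation of normal lattice expansions as complex algebras of $\mathrm{LE}$-frames, using Birkhoff's representation theorem as the lattice-theoretic backbone.

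First I would read off from the axioms and rules of Section \ref{ssec:Non-distributive modal logic} that $\mathbb{L}$ presents exactly the (in)equational theory of \emph{normal lattice expansions}: bounded lattices $(L,\wedge,\vee,\bot,\top)$ carrying a finite-meet-preserving operator $\Box$ (so $\Box\top=\top$ and $\Box(a\wedge b)=\Box a\wedge\Box b$, using the monotonicity rule for $\Box$ together with $\Box p\wedge\Box q\vdash\Box(p\wedge q)$) and a finite-join-preserving operator $\Diamond$ (so $\Diamond\bot=\bot$ and $\Diamond(a\vee b)=\Diamond a\vee\Diamond b$). I then form the Lindenbaum--Tarski algebra $\mathbf{A}$ of $\mathbb{L}$, whose universe is $\mathcal{L}$ quotiented by provable equivalence ($\varphi\sim\psi$ iff $\varphi\vdash_{\mathbb{L}}\psi$ and $\psi\vdash_{\mathbb{L}}\varphi$), equipped with the induced operations. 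Since the rules show $\sim$ is a congruence and $\vdash_{\mathbb{L}}$ descends to the lattice order, the assumption that $\varphi\vdash\psi$ is unprovable gives $[\varphi]\not\leq[\psi]$ in $\mathbf{A}$.

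Next I would embed $\mathbf{A}$ into the complex algebra of an $\mathrm{LE}$-frame via its canonical extension $\mathbf{A}^\delta$. As a complete lattice, $\mathbf{A}^\delta$ is realized (in the style of Birkhoff's theorem) as the concept lattice $\mathfrak{P}^+$ of the polarity $\mathfrak{P}=(\mathrm{Filt}(\mathbf{A}),\mathrm{Idl}(\mathbf{A}),I)$ with $F\,I\,J$ iff $F\cap J\neq\varnothing$; by Proposition \ref{prop:concept lattice} it is completely join-generated by the filter-concepts and meet-generated by the ideal-concepts. The operators $\Box,\Diamond$ extend canonically to $\mathbf{A}^\delta$ and are induced by two $I$-compatible relations $R_\Box\subseteq A\times X$ and $R_\Diamond\subseteq X\times A$ defined directly from $\Box$ and $\Diamond$, turning $\mathfrak{F}=(\mathfrak{P},R_\Diamond,R_\Box)$ into an $\mathrm{LE}$-frame whose operators $\Box^{\mathfrak{P}^+},\Diamond^{\mathfrak{P}^+}$ coincide with the canonical extensions of $\Box,\Diamond$. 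The canonical map $e\colon\mathbf{A}\to\mathfrak{F}^+$ sending $a$ to the concept generated by its principal filter is then an $\mathrm{LE}$-homomorphism that is an order-embedding. Setting $V(p):=e([p])$, a routine induction on formula complexity gives $\overline{V}(\chi)=e([\chi])$ for all $\chi\in\mathcal{L}$; since $e$ is an order-embedding and $[\varphi]\not\leq[\psi]$, we get $\val{\overline{V}(\varphi)}\not\subseteq\val{\overline{V}(\psi)}$, i.e.\ $\mathfrak{M}=(\mathfrak{F},V)\nvDash\varphi\vdash\psi$.

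The hard part will be the middle step: verifying that the relations $R_\Box,R_\Diamond$ built from the operators are genuinely $I$-compatible and that the operators they induce on $\mathfrak{P}^+$ agree, under $e$, with the algebraic $\Box,\Diamond$. In the distributive case one could restrict to prime filters and lean on Stone/Priestley duality, but non-distributivity forces us to carry \emph{all} filters and ideals, so the interaction of $\Box$ with meets of concepts and of $\Diamond$ with joins of concepts must be checked directly, exploiting that $\Box$ preserves finite meets and $\Diamond$ finite joins. This is precisely the content of the duality for normal lattice expansions; once it is in place, the reduction to $[\varphi]\not\leq[\psi]$ and the final refutation are bookkeeping.
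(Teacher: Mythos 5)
Your proposal is correct and takes essentially the same route as the proof this statement rests on: the paper itself does not prove it but imports it from \cite{conradie2017toward} (Proposition 3), whose argument is exactly what you sketch --- Lindenbaum--Tarski algebra, so unprovability yields $[\varphi]\not\leq[\psi]$, followed by the filter-ideal polarity $(\mathrm{Filt}(\mathbf{A}),\mathrm{Idl}(\mathbf{A}),I)$ with $F\,I\,J$ iff $F\cap J\neq\emptyset$ and relations $R_\Box,R_\Diamond$ read off from the operators (the same construction the paper recalls from \cite{conradie2018goldblatt} in Section \ref{sec:Modal Saturation via Filter-Ideal Extensions}), and a truth lemma. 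The $I$-compatibility and operator-agreement verification you flag as the hard part is precisely the known discrete duality for normal lattice expansions in those references, so there is no gap.
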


\subsection{Two Sorted First-Order Logic}\label{Two Sorted First-Order Logic}
  Given a countable set of propositional variables $\atprop$, we let $\mathcal{L}^1$ be the two sorted first-order language with equality built over two disjoint countable sets of variables $G$ and $M$, three binary predicates $I$, $R_\Box$, $R_\Diamond$, and two unary predicates $P_A$ and $P_X$ for every $p\in\atprop$. A {\em domain} for $\mathcal{L}^1$ is a pair of disjoint sets $A$ and $X$. An {\em interpretation} $\mathrm{I}$ of  $\mathcal{L}^1$ over domain $(A,X)$ assigns binary predicates $I$, $R_\Box$, and $R_\Diamond$ to  relations  $I^\mathrm{I} \subseteq A \times X$, ${R^\mathrm{I}}_\Box \subseteq A \times X$,  and ${R^\mathrm{I}}_\Diamond \subseteq X \times A$, such that ${R^\mathrm{I}}_\Box $ and ${R^\mathrm{I}}_\Diamond $ are $I^\mathrm{I}$-compatible. For every propositional variable $p\in\atprop$, the unary  predicates 
$P_A$ and $P_X$ are assigned to sets $P^{\mathrm{I}}_A \subseteq A$, $P^{\mathrm{I}}_X \subseteq X$, such that, $(I^\mathrm{I})^{(1)}[P^{\mathrm{I}}_A]=P^{\mathrm{I}}_X$, and $(I^\mathrm{I})^{(0)}[P^{\mathrm{I}}_X]=P^{\mathrm{I}}_A$, respectively. An $\mathcal{L}^1$-{\em structure} is a tuple $\mathfrak{M}=(A, X, \mathrm{I})$, where $\mathrm{I}$ is an interpretation of $\mathcal{L}^1$ over domain $A$ and $X$. A {\em valuation} on an $\mathcal{L}^1$- structure $\mathfrak{M}$ is a map $v$ which assigns every variable $g\in G$ (resp.~$m\in M$) to an element $a\in A$ (resp.~$x\in X$). The {\em satisfaction relation} for any $\mathcal{L}^1$-term w.r.t.~any $\mathcal{L}^1$- structure $\mathfrak{M}$ and valuation $v$ on it is defined as follows: 

\smallskip
{{
\centering
\small{
\begin{tabular}{ll}
     1.~$\mathfrak{M}, v\vDash g_1=g_2$ iff $v(g_1)=v(g_2)$ & 2.~$\mathfrak{M},v\vDash m_1=m_2$ iff $v(m_1)=v(m_2)$\\
      3.~$\mathfrak{M}, v\vDash P_A(g_1)$ iff $v(g_1)\in P_A^{\mathrm{I}} $ & 4.~$\mathfrak{M},v\vDash P_X(m_1)$ iff $v(m_1)\in P_X^{\mathrm{I}}$ \\
      5.~$\mathfrak{M},v\vDash g_1Im_1$ iff $v(g_1)I^{\mathrm{I}} v(m_1)$ & 6.~$\mathfrak{M}, v\vDash g_1 R_\Box m_1$ iff $v(g_1)R_\Box^{\mathrm{I}} v(m_1)$ \\
      7.~$\mathfrak{M},v \vDash m_1 R_\Diamond g_1$ iff $v(m_1)R_\Diamond^{\mathrm{I}} v(g_1)$. 
      \end{tabular}
\par
}
}}
\smallskip

Any $\mathrm{LE}$-model $\mathfrak{M}=(A, X, I^{\mathfrak{M}}, R^{\mathfrak{M}}_\Box, R^{\mathfrak{M}}_\Diamond, V)$ can be seen as an $\mathcal{L}^1$-structure  (and vice versa), where  the domain of interpretation is formed by sets $A$ and $X$ from the polarity. The binary predicate symbols  $I$, $R_\Box$, $R_\Diamond$ are interpreted by the corresponding relations on the polarity, and unary predicate $P_A$ (resp.~$P_X$) is interpreted as the set $\val{V(p)}$ (resp.~$\descr{V(p)}$), for any  $p \in \atprop$.  For any $\mathrm{LE}$-model $\mathfrak{M}$, we use the same name $\mathfrak{M}$ for the two sorted first-order structure when they are clearly distinguishable from the context. 

\section{Simulations and Bisimulations on \texorpdfstring{$\mathrm{LE}$}{LE}-Models}\label{sec:bisimulations}
In this section, we generalize simulations and bisimulations from  Kripke models to $\mathrm{LE}$-models and show invariance of $\mathcal{L}$-formulas under bisimulations. We give an example which shows that the  Hennesy-Milner theorem does not hold with respect to this definition of bisimulation. Hence, we give a more general definition of bisimilarity which is based on simulations, and  prove Hennesy-Milner theorem for polarity-based semantics of non-distributive modal logic with respect to this generalized notion of bisimilarity.



\begin{definition}\label{def: modally equivalent}
Let $\mathfrak{M}_1=(\mathfrak{F}_1, V_1)$ and $\mathfrak{M}_2=(\mathfrak{F}_2, V_2)$ be any $\mathrm{LE}$-models, $a_1, x_1$ in $\mathfrak{M}_1$ and $a_2, x_2$ in $\mathfrak{M}_1$, then

1.~$\mathfrak{M}_1, a_1\rightsquigarrow_A \mathfrak{M}_2, a_2$ if for any $\varphi\in\mathcal{L}$, $\mathfrak{M}_1, a_1\Vdash \varphi$ implies $\mathfrak{M}_2, a_2\Vdash \varphi$.

 2.~$\mathfrak{M}_1, x_1\rightsquigarrow_X \mathfrak{M}_2, x_2$ if for any $\varphi\in\mathcal{L}$, $\mathfrak{M}_1, x_1\succ \varphi$ implies $\mathfrak{M}_2, x_2\succ \varphi$.
 
3.~$\mathfrak{M}_1,a_1\leftsquigarrow_A \mathfrak{M}_2,a_2$ if $\mathfrak{M}_2, a_2\rightsquigarrow_A \mathfrak{M}_1, a_1$. $\mathfrak{M}_1, x_1\leftsquigarrow_X \mathfrak{M}_2, x_2$ if $\mathfrak{M}_2, x_2\rightsquigarrow_X \mathfrak{M}_1, x_1$.

4.~$\mathfrak{M}_1, a_1\leftrightsquigarrow_A \mathfrak{M}_2, a_2$ if $\mathfrak{M}_1, a_1\rightsquigarrow_A \mathfrak{M}_2, a_2$ and $\mathfrak{M}_1,a_1\leftsquigarrow_A \mathfrak{M}_2,a_2$. 
    
5.~$\mathfrak{M}_1, x_1\leftrightsquigarrow_X \mathfrak{M}_2, x_2$ if $\mathfrak{M}_1, x_1\rightsquigarrow_X \mathfrak{M}_2, x_2$ and $\mathfrak{M}_1, x_1\leftsquigarrow_X \mathfrak{M}_2, x_2$.

\end{definition}

For ease of notation, we will use $a_1\rightsquigarrow_A a_2$ instead of $\mathfrak{M}_1, a_1\rightsquigarrow_A \mathfrak{M}_2, a_2$ when $\mathfrak{M}_1$ and $\mathfrak{M}_2$ are clear from the context.  We use similar shorthands for the other relations  defined in Definition \ref{def: modally equivalent}. The following definition generalizes simulations and  bisimulations on Kripke models to $\mathrm{LE}$-models. 
\begin{definition}\label{def:simulation and bisimulation}
Let $\mathfrak{M}_1=(\mathfrak{F}_1, V_1)$ and $\mathfrak{M}_2=(\mathfrak{F}_2, V_2)$ be any $\mathrm{LE}$-models where $\mathfrak{F}_1=(A_1,X_1,I_1,{R_\Box}_1, {R_{\Diamond}}_1)$ and $\mathfrak{F}_2=(A_2,X_2,I_2,{R_\Box}_2, {R_{\Diamond}}_2)$. A {\em simulation} from $\mathfrak{M}_1$ to $\mathfrak{M}_2$ is a pair of relations $(S,T)$ such that $S \subseteq A_1 \times A_2$ and $T \subseteq X_1 \times X_2$ satisfying the following conditions: for any $a_1\in A_1$, $x_1\in X_1$, $a_2\in A_2$ and $x_2\in X_2$, 

1.~If $a_1 S a_2$, then $\forall p\in\atprop$, if $ a_1 \in \val{V_1 (p) }$, then   $ a_2 \in \val{V_2 (p)}$.

2.~If $x_1 T x_2$, then $\forall p\in\atprop$, if $ x_2 \in \descr{V_2 (p) }$, then   $ x_1 \in \descr{V_1 (p) }$.

3.~If $a_1 S a_2$ and $a_2 I^c_2 x_2$, then $\exists x_1 \in X_1$ such that $a_1 I^c_1 x_1$ and  $x_1 T x_2$.

4.~If $x_1 T x_2$ and $a_1 I^c_1 x_1$, then $\exists a_2 \in A_2$ such that $a_2 I^c_2 x_2$ and  $a_1 S a_2$.

5.~If $a_1 S a_2$ and $a_2 R^c_{\Box_2} x_2$, then $\exists x_1 \in X_1$ such that  $a_1 R^c_{\Box_1} x_1$ and  $x_1 T x_2$.

6.~If $x_1 T x_2$ and $x_1 R^c_{\Diamond_1} a_1$, then $\exists a_2 \in A_2$ such that  $x_2 R^c_{\Diamond_2} a_2$ and  $a_1 S a_2$.

We write $\mathfrak{M}_1, a_1\rightrightarrows\mathfrak{M}_2, a_2$ (resp.~$\mathfrak{M}_1, x_1\rightrightarrows\mathfrak{M}_2, x_2$) if there exists a simulation $(S, T)$ from $\mathfrak{M}_1$ to $\mathfrak{M}_2$ such that $a_1Sa_2$ (resp.~$x_1Tx_2$), and $\mathfrak{M}_1, a_1\leftleftarrows\mathfrak{M}_2, a_2$ (resp.~$\mathfrak{M}_1, x_1\leftleftarrows\mathfrak{M}_2, x_2$) if $\mathfrak{M}_2, a_2\rightrightarrows\mathfrak{M}_1, a_1$ (resp.~$\mathfrak{M}_2, x_2\rightrightarrows\mathfrak{M}_1, x_1$). A {\em bisimulation} between $\mathfrak{M}_1$ and $\mathfrak{M}_2$ is simulation $(S,T)$ from $\mathfrak{M}_1$ to $\mathfrak{M}_2$, such that $(S^{-1},T^{-1})$ is a simulation from $\mathfrak{M}_2$ to $\mathfrak{M}_1$.
\end{definition}

 This definition naturally generalizes sumulations and bisimulations  on the Kripke models in the following sense. In \cite[Section 3.3]{conradie2021rough}, it was showed that $\mathrm{LE}$-models can be seen as a generalization of Kripke models by associating any Kripke model $\mathcal{M}=(W, R, V)$ with an $\mathrm{LE}$-model $\mathrm{LE}(\mathcal{M})=(W_A, W_X, \neq, R_{\Box}, R_{\Diamond}, V')$, where $W_A=W=W_B$ and $R_{\Box}=R_{\Diamond}\subseteq W\times W$ such that $w_1 R_\Box w_2$ iff $w_1 R^c w_2$ iff $w_1 R_{\Diamond} w_2$, and for any $p\in\atprop$, $V'(p) = (V(p), V(p)^c)$. This construction ensures that for any points in two Kripke models $\mathcal{M}$ and $\mathcal{M}'$  are modally equivalent, their liftings in the $\mathrm{LE}$-models $\mathrm{LE}(\mathcal{M})$ and $\mathrm{LE}(\mathcal{M}')$ are modally equivalent w.r.t.~the language $\mathcal{L}$.  For more details, see \cite{conradie2021rough}. The following lemma shows that any simulation (resp.~bisimulation) on a Kripke model $\mathcal{M}$ naturally induces a simulation (resp.~bisimulation) on $\mathrm{LE}(\mathcal{M})$. 


\begin{lemma}\label{lemma: generalize bisimulation on Kripke models}
Let $\mathcal{M}_1=(W_1, R_1, V_1)$ and $\mathcal{M}_2=(W_2, 
R_2, V_2)$ be any Kripke models, $\mathrm{LE}(\mathcal{M}_1)=(W_{A_1}, W_{X_1}, \neq, R_{\Box_1}, R_{\Diamond_1}, V_1')$ and $\mathrm{LE}(\mathcal{M}_2)=(W_{A_2}, W_{X_2}, \neq, R_{\Box_2}, R_{\Diamond_2}, V_2')$ be their corresponding $\mathrm{LE}$-models. A relation $Z \subseteq W_1 \times W_2$ between the Kripke models $\mathcal{M}_1$ and $\mathcal{M}_2$ is a  simulation (resp.~bisimulation) iff the tuple $(Z_A, Z_X)$, where $Z_A=Z= Z_X\subseteq W_{A_1} \times  W_{A_2}=W_{X_1}\times  W_{X_2}$ is a simulation (resp.~bisimulation) between the $\mathrm{LE}$-models $\mathrm{LE}(\mathcal{M}_1)$ and $\mathrm{LE}(\mathcal{M}_2)$.
\end{lemma}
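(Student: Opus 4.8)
The plan is to prove the biconditional by directly unfolding Definition \ref{def:simulation and bisimulation} for the two specific $\mathrm{LE}$-models $\mathrm{LE}(\mathcal{M}_1)$ and $\mathrm{LE}(\mathcal{M}_2)$ and checking that, under the identification $Z_A = Z = Z_X$, the six clauses collapse exactly onto the defining clauses of a Kripke (bi)simulation. First I would record the translation dictionary induced by the construction $\mathrm{LE}(\mathcal{M})$: since $I = \,\neq$, the complement $I^c$ is the equality (diagonal) relation on $W$; since $R_\Box = R_\Diamond = R^c$, their complements satisfy $R^c_\Box = R^c_\Diamond = R$; and from $V'(p) = (V(p), V(p)^c)$ we read off $\val{V'(p)} = V(p)$ and $\descr{V'(p)} = V(p)^c$. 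These identities are what drive the whole reduction, so I would state them up front.

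With this dictionary in hand, I would translate the six clauses one by one. Clauses 1 and 2 both become the single atomic condition of a Kripke simulation: clause 1 reads ``$w_1 Z w_2$ and $w_1 \in V_1(p)$ imply $w_2 \in V_2(p)$'', and clause 2, after taking the contrapositive of ``$x_2 \in V_2(p)^c \Rightarrow x_1 \in V_1(p)^c$'', yields the same statement (this is where $\descr{V'(p)} = V(p)^c$ and $Z_A = Z_X$ are used). Clauses 3 and 4 become vacuously true: since $I^c$ is equality, the hypothesis $a_2 I^c_2 x_2$ forces $a_2 = x_2$, and choosing the witness $x_1 := a_1$ (resp.\ $a_2 := x_2$) discharges the clause using only $Z_A = Z_X$. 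Finally, using $R^c_\Box = R^c_\Diamond = R$, clause 5 becomes exactly the \emph{back} condition and clause 6 becomes exactly the \emph{forth} condition of a Kripke (bi)simulation between $\mathcal{M}_1$ and $\mathcal{M}_2$.

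Putting these translations together yields the simulation case: clauses 1--6 for $(Z,Z)$ hold if and only if $Z$ preserves atoms forward and satisfies both the forth and the back conditions, which is the notion of Kripke simulation appropriate to the bimodal language $\mathcal{L}$ where $\Box$ and $\Diamond$ are independent primitives (forth transfers $\Diamond$-formulas, back transfers $\Box$-formulas). For the bisimulation case I would additionally unfold the requirement that $(Z^{-1}, Z^{-1})$ be a simulation from $\mathrm{LE}(\mathcal{M}_2)$ to $\mathrm{LE}(\mathcal{M}_1)$. The key observation is that the forth condition for $Z^{-1}$ coincides with the back condition for $Z$ and vice versa, so the modal clauses contribute nothing new; the only fresh content is the reverse atomic clause ``$w_1 Z w_2$ and $w_2 \in V_2(p)$ imply $w_1 \in V_1(p)$''. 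Combined with the forward atomic clause this gives atomic equivalence in both directions, so $(Z,Z)$ is an $\mathrm{LE}$-bisimulation iff $Z$ is a Kripke bisimulation.

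The main obstacle — really the only place where care is needed — is the bookkeeping of complements and quantifier directions: because $I$ and the accessibility relations in $\mathrm{LE}(\mathcal{M})$ are defined through complementation, one must track each $(\cdot)^c$ so as not to confuse $R$ with $R^c$, and one must verify that clause 5 genuinely produces \emph{back} while clause 6 produces \emph{forth} (and not the reverse), reflecting that the full language $\mathcal{L}$ contains both modalities. Establishing the vacuity of clauses 3 and 4 also requires explicitly invoking $Z_A = Z_X$ when transporting the chosen witness between the $A$-sort and the $X$-sort. Everything else is routine substitution.
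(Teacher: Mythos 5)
Your proposal is correct and takes essentially the same route as the paper, whose entire proof is ``follows straight-forwardly from the definitions'': you have simply carried out that unfolding explicitly, and your dictionary ($I^c$ is equality, $R^c_\Box = R^c_\Diamond = R$, $\val{V'(p)} = V(p)$, $\descr{V'(p)} = V(p)^c$), the identification of clauses 5 and 6 with the back and forth conditions, and the observation that the reverse simulation only adds the reverse atomic clause are all accurate. Two minor quibbles: clauses 3 and 4 are not \emph{vacuous} but rather trivially discharged by the explicit witness you supply, and your reading of ``Kripke simulation'' as the directed notion (atoms preserved forward, plus both forth and back) is indeed the one the lemma requires, since $\mathcal{L}$ has $\Box$ and $\Diamond$ as independent primitives without negation.
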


\begin{proof}
The proof follows straight-forwardly from the definitions of simulations and bisimulations on Kripke models and $\mathrm{LE}$-models.
\end{proof}


The following theorem shows that the $\mathcal{L}$-formulas are preserved and reflected by simulations w.r.t.~modal satisfaction relations $\Vdash$ and $\succ$, respectively.

\begin{theorem}\label{thm:simulation invariance}
Let $\mathfrak{M}_1=(\mathfrak{F}_1, V_1)$ and $\mathfrak{M}_2=(\mathfrak{F}_2, V_2)$ be any $\mathrm{LE}$-models, $a_1, x_1$ in $\mathfrak{M}_1$ and $a_2, x_2$ in $\mathfrak{M}_2$, then


1.~If $\mathfrak{M}_1, a_1\rightrightarrows\mathfrak{M}_2, a_2$, then $\mathfrak{M}_1, a_1\rightsquigarrow_A\mathfrak{M}_2, a_2$. 

2.~If $\mathfrak{M}_1, x_1\rightrightarrows\mathfrak{M}_2, x_2$, then $\mathfrak{M}_1, x_1\leftsquigarrow_X\mathfrak{M}_2, x_2$. 

\end{theorem}

\begin{proof}
    See Appendix \ref{Proof of thm:simulation invariance}.
\end{proof}

As a corollary, we get that the $\mathcal{L}$-formulas are bisimulation-invariant. 

\begin{corollary}
    
\label{thm:bisimulation invariance for the first definition}
Let $\mathfrak{M}_1=(\mathfrak{F}_1, V_1)$ and $\mathfrak{M}_2=(\mathfrak{F}_2, V_2)$ be any $\mathrm{LE}$-models, where $\mathfrak{F}_1=(A_1,X_1,I_1,{R_\Box}_1, {R_{\Diamond}}_1)$ and $\mathfrak{F}_2=(A_2,X_2,I_2,{R_\Box}_2, {R_{\Diamond}}_2)$, and $(S,T)$ be a bisimulation between them. Then 

1.~For any $a_1\in A_1$ and $a_2\in A_2$, $a_1 S a_2$ implies $\mathfrak{M}_1, a_1\leftrightsquigarrow_A \mathfrak{M}_2, a_2$.

2.~For any $x_1\in X_1$ and $x_2\in X_2$, $x_1 T x_2$ implies $ \mathfrak{M}_1, x_1\leftrightsquigarrow_X \mathfrak{M}_2, x_2$.
\end{corollary}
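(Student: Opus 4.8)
The plan is to derive the corollary directly from Theorem \ref{thm:simulation invariance} by exploiting the symmetry built into the definition of a bisimulation. Recall that a bisimulation $(S,T)$ between $\mathfrak{M}_1$ and $\mathfrak{M}_2$ is, by Definition \ref{def:simulation and bisimulation}, a simulation from $\mathfrak{M}_1$ to $\mathfrak{M}_2$ whose converse $(S^{-1},T^{-1})$ is a simulation from $\mathfrak{M}_2$ to $\mathfrak{M}_1$. So the single hypothesis ``$(S,T)$ is a bisimulation'' packages together two simulations pointing in opposite directions, and each of the four conjuncts $\rightsquigarrow_A$, $\leftsquigarrow_A$ (in part 1) and $\rightsquigarrow_X$, $\leftsquigarrow_X$ (in part 2) will come from applying Theorem \ref{thm:simulation invariance} to one of these two simulations.

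For part 1, suppose $a_1 S a_2$. Since $(S,T)$ is a simulation from $\mathfrak{M}_1$ to $\mathfrak{M}_2$ and $a_1 S a_2$, we have $\mathfrak{M}_1, a_1 \rightrightarrows \mathfrak{M}_2, a_2$ by definition, so Theorem \ref{thm:simulation invariance}.1 gives $\mathfrak{M}_1, a_1 \rightsquigarrow_A \mathfrak{M}_2, a_2$. For the reverse direction, note that $a_1 S a_2$ means $a_2 S^{-1} a_1$; since $(S^{-1},T^{-1})$ is a simulation from $\mathfrak{M}_2$ to $\mathfrak{M}_1$, this witnesses $\mathfrak{M}_2, a_2 \rightrightarrows \mathfrak{M}_1, a_1$, and Theorem \ref{thm:simulation invariance}.1 applied in this direction yields $\mathfrak{M}_2, a_2 \rightsquigarrow_A \mathfrak{M}_1, a_1$, which is exactly $\mathfrak{M}_1, a_1 \leftsquigarrow_A \mathfrak{M}_2, a_2$ by clause 3 of Definition \ref{def: modally equivalent}. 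Combining the two gives $\mathfrak{M}_1, a_1 \leftrightsquigarrow_A \mathfrak{M}_2, a_2$ by clause 4 of that definition.

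Part 2 is entirely parallel, but one must be careful to track which part of Theorem \ref{thm:simulation invariance} to use, since on the $X$-side simulations reflect $\succ$-formulas rather than preserving them. Suppose $x_1 T x_2$. Then $\mathfrak{M}_1, x_1 \rightrightarrows \mathfrak{M}_2, x_2$ via $(S,T)$, so Theorem \ref{thm:simulation invariance}.2 gives $\mathfrak{M}_1, x_1 \leftsquigarrow_X \mathfrak{M}_2, x_2$. For the other conjunct, $x_1 T x_2$ gives $x_2 T^{-1} x_1$, so $\mathfrak{M}_2, x_2 \rightrightarrows \mathfrak{M}_1, x_1$ via $(S^{-1}, T^{-1})$, and Theorem \ref{thm:simulation invariance}.2 now yields $\mathfrak{M}_2, x_2 \leftsquigarrow_X \mathfrak{M}_1, x_1$; unwinding the definition of $\leftsquigarrow_X$ this says $\mathfrak{M}_1, x_1 \rightsquigarrow_X \mathfrak{M}_2, x_2$. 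Together these give $\mathfrak{M}_1, x_1 \leftrightsquigarrow_X \mathfrak{M}_2, x_2$ by clause 5 of Definition \ref{def: modally equivalent}.

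Since Theorem \ref{thm:simulation invariance} is already available, this is a short bookkeeping argument rather than a genuinely new proof, and I do not expect a serious obstacle. The only real care needed is to keep the directions of the arrows straight: the asymmetry between the $A$-sort (where simulations \emph{preserve} $\Vdash$, giving $\rightsquigarrow_A$) and the $X$-sort (where simulations \emph{reflect} $\succ$, giving $\leftsquigarrow_X$) means the two parts apply the theorem to the same pair of simulations but end up combining them through the two different halves of the biconditionals encoded in clauses 4 and 5 of Definition \ref{def: modally equivalent}. Getting the converse-simulation and the unwinding of $\leftsquigarrow$ exactly right at each of the four invocations is the one place where a sign error could creep in.
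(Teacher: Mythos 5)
Your proposal is correct and is essentially the paper's own proof: the paper likewise observes that a bisimulation gives two simulations $(S,T)$ and $(S^{-1},T^{-1})$ and then invokes Theorem \ref{thm:simulation invariance} in both directions. Your version simply spells out the direction-tracking (including the correct unwinding of $\leftsquigarrow_X$ on the $X$-sort) that the paper leaves implicit.
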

\begin{proof}
Note that by definition for  any  bisimulation $(S,T)$,  both $(S,T)$ and $(S^{-1}, T^{-1})$ are simulations. The  proof follows immediately  by Theorem \ref{thm:simulation invariance}. 
\end{proof}

We now try to prove the Hennessy-Milner theorem for the polarity-based semantics of non-distributive modal logic. We begin by generalizing the notion of image-finite models from Kripke semantics to polarity-based semantics.

\begin{definition}\label{def:image finite}
    An $\mathrm{LE}$-model $\mathfrak{M}=(A, X, I, R_{\Box}, R_{\Diamond}, V)$ is {\em image-finite} if for any $a\in A$ and $x\in X$, the sets $\{a'\mid a'I^{c}x\}, \{x'\mid aI^{c} x'\}, \{a'\mid a'R^{c}_{\Box}x\}$ and $\{x'\mid x'R^{c}_{\Diamond} a\}$ are all finite.
\end{definition}
Note that a Kripke model $\mathcal{M}$  is image-finite iff  $\mathrm{LE}(\mathcal{M})$  is image-finite. Thus, this definition can be seen as a generalization of image-finite models to polarity-based setting.  However, as the language of non-distributive modal logic ($\mathcal{L}$) does not contain negation, the proof strategy for the classical Hennessy-Milner theorem  cannot be adopted in the non-distributive setting. In fact, as  the following  example shows,  Hennessy-Milner theorem does not hold for the polarity-based semantics of non-distributive modal logic for the notion of bisimulation given by Definition \ref{def:simulation and bisimulation}.

\begin{example}
Let $\mathfrak{M}_i = (A_i, X_i, I_i, {R_\Box}_i, {R_\Diamond}_i, V_i)$ for $i\in \{1,2\}$ be two image-finite models
    such that $A_1=\{a_1, b_1\}$, $X_1 =\{x_1, y_1\}$, $I_1= \{(b_1, x_1)\}$, $A_2=\{a_2\}$, $X_2=\{x_2\}$ and $I_2= {R_\Box}_i= {R_\Diamond}_i=\emptyset$ for $i\in \{1,2\}$. Let $p$, $q$ be  any two fixed propositions. Suppose $V_1$ and $V_2$ are  such that, $V_1(q)=(\{b_1\}, \{x_1\})$, $p\in\atprop$, $V_1(p)=(\{a_1, b_1\}, \emptyset)$, $V_2(q)=(\emptyset,\{x_2\})$ and $V_2(p)=(\{a_2\},\emptyset)$. These models are depicted in Figure \ref{fig:counter-example}. It is  easy to check that $a_1\leftrightsquigarrow_A a_2$.

    Suppose there exists a bisimulation $(S, T)$ between $\mathfrak{M}_1$ and $\mathfrak{M}_2$, such that, $a_1Sa_2$. Then, by item 3 of Definition \ref{def:simulation and bisimulation} for simulation $(S^{-1}, T^{-1})$
    and $a_1I^c_1y_1$, we must have   $y_1Tx_2$. However, $\mathfrak{M}_1, y_1\nsucc q$ and $\mathfrak{M}_2, x_2\succ q$, which contradicts item 2 of Definition \ref{def:simulation and bisimulation} for simulation $(S,T)$. Hence, there can not be a bisimulation between  $a_1$ and $a_2$.
\end{example}

\begin{figure}
    \centering
        \includegraphics[scale=0.23]{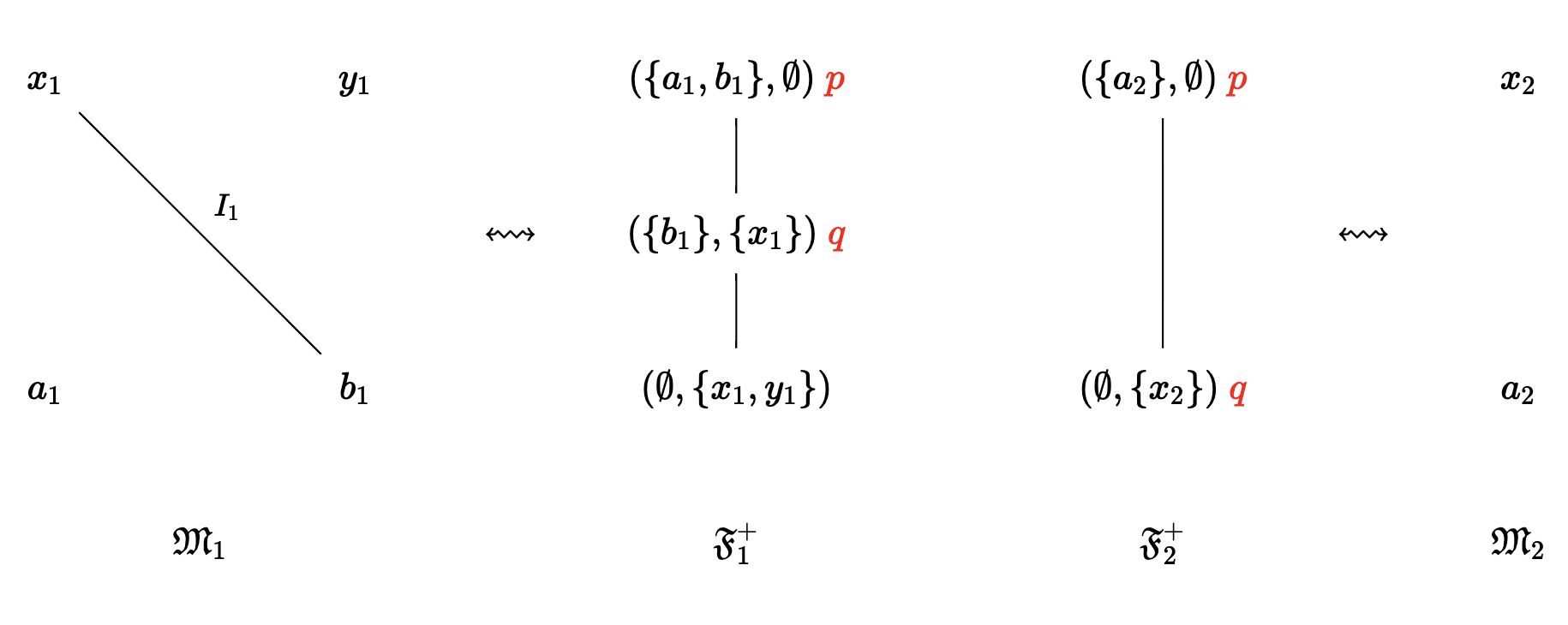}
    \caption{Models $\mathfrak{M}_1$ and  $\mathfrak{M}_2$ and their complex algebras}
    \label{fig:counter-example}
\end{figure}

Therefore, to obtain the Hennessy-Milner theorem for polarity-based semantics, we introduce the notion of {\em bisimilarity}, which is founded on the definition of simulations between $\mathrm{LE}$-models. This notion naturally generalizes the definition of bisimulation outlined in Definition \ref{def:simulation and bisimulation}.

\begin{definition} \label{def:bisimilar}
Let $\mathfrak{M}_1=(\mathfrak{F}_1, V_1)$ and $\mathfrak{M}_2=(\mathfrak{F}_2, V_2)$ be any $\mathrm{LE}$-models, where $\mathfrak{F}_1=(A_1,X_1,I_1,{R_\Box}_1, {R_{\Diamond}}_1)$, $\mathfrak{F}_2=(A_2,X_2,I_2,{R_\Box}_2, {R_{\Diamond}}_2)$. For any $a_1, x_1$ in $\mathfrak{M}_1$ and $a_2, x_2$ in $\mathfrak{M}_2$, we say that $a_1$ and $a_2$ (resp.~$x_1$ and $x_2$) are {\em bisimilar}, and denote it by $\mathfrak{M}_1, a_1\rightleftarrows\mathfrak{M}_2, a_2$ (resp.~$\mathfrak{M}_1, x_1\rightleftarrows\mathfrak{M}_2, x_2$), if both $\mathfrak{M}_1, a_1\rightrightarrows\mathfrak{M}_2, a_2$ and $\mathfrak{M}_1, a_1\leftleftarrows\mathfrak{M}_2, a_2$ (resp.~$\mathfrak{M}_1, x_1\rightrightarrows\mathfrak{M}_2, x_2$ and $\mathfrak{M}_1, x_1\leftleftarrows\mathfrak{M}_2, x_2$).

\end{definition}
Henceforth, when we say that two elements are bisimilar, we refer to Definition \ref{def:bisimilar}, unless stated otherwise. We now show that even though Hennessy-Milner theorem for bisimulations does not generalize to the polarity-based setting,  its counterpart for simulations does so.

\begin{theorem}[Hennesy-Milner Theorem]\label{thm:finite image implies bisimilar}
Let $\mathfrak{M}_1=(\mathfrak{F}_1, V_1)$ and $\mathfrak{M}_2=(\mathfrak{F}_2, V_2)$ be any image-finite $\mathrm{LE}$-models, $a_1, x_1$ in $\mathfrak{M}_1$ and $a_2, x_2$ in $\mathfrak{M}_2$, then

1.~$\mathfrak{M}_1, a_1\rightsquigarrow_A \mathfrak{M}_2, a_2$ if and only if $\mathfrak{M}_1, a_1\rightrightarrows\mathfrak{M}_2, a_2$. 

2.~$\mathfrak{M}_1, x_1\rightsquigarrow_X \mathfrak{M}_2, x_2$ if and only if $\mathfrak{M}_1, x_1\leftleftarrows\mathfrak{M}_2, x_2$. 

\end{theorem}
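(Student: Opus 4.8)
The plan is to treat the two ``if'' directions as immediate corollaries of Theorem~\ref{thm:simulation invariance} and to concentrate on the two ``only if'' directions, which adapt the classical Hennessy--Milner argument to the two-sorted, negation-free setting; image-finiteness is used only in the latter. For the right-to-left implications: in item~1, $\mathfrak{M}_1, a_1 \rightrightarrows \mathfrak{M}_2, a_2$ gives $\mathfrak{M}_1, a_1 \rightsquigarrow_A \mathfrak{M}_2, a_2$ directly by Theorem~\ref{thm:simulation invariance}(1); in item~2, $\mathfrak{M}_1, x_1 \leftleftarrows \mathfrak{M}_2, x_2$ unfolds to $\mathfrak{M}_2, x_2 \rightrightarrows \mathfrak{M}_1, x_1$, whence Theorem~\ref{thm:simulation invariance}(2) yields $\mathfrak{M}_2, x_2 \leftsquigarrow_X \mathfrak{M}_1, x_1$, i.e.\ $\mathfrak{M}_1, x_1 \rightsquigarrow_X \mathfrak{M}_2, x_2$.

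For the forward implication of item~1, I would exhibit a single simulation $(S,T)$ from $\mathfrak{M}_1$ to $\mathfrak{M}_2$ witnessing $a_1 S a_2$, taking the modal-equivalence pair $S := \{(a_1,a_2) : \mathfrak{M}_1, a_1 \rightsquigarrow_A \mathfrak{M}_2, a_2\}$ and $T := \{(x_1,x_2) : \mathfrak{M}_2, x_2 \rightsquigarrow_X \mathfrak{M}_1, x_1\}$; then $a_1 S a_2$ holds by hypothesis, and it remains to check the six clauses of Definition~\ref{def:simulation and bisimulation}. Clauses~1 and~2 follow by instantiating the defining implications at atoms $p$. Clauses~3 and~4 are the forth/back conditions for the incidence $I$: for clause~3, assuming $a_1 S a_2$ and $a_2 I_2^c x_2$ but that no witness exists, each $x_1$ in the finite set $\{x_1 : a_1 I_1^c x_1\}$ (finite by Definition~\ref{def:image finite}) fails $x_1 T x_2$ and so carries a formula $\psi_{x_1}$ with $\mathfrak{M}_2, x_2 \succ \psi_{x_1}$ but $\mathfrak{M}_1, x_1 \not\succ \psi_{x_1}$; setting $\psi := \bigvee \psi_{x_1}$ (reading the empty disjunction as $\bot$) and using the $\succ$- and $\Vdash$-clauses for $\vee$, one gets $\mathfrak{M}_1, a_1 \Vdash \psi$ yet $\mathfrak{M}_2, a_2 \not\Vdash \psi$, contradicting $a_1 \rightsquigarrow_A a_2$. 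Clause~4 is dual, using a finite conjunction $\bigwedge \chi_{a_2}$ over $\{a_2 : a_2 I_2^c x_2\}$ (empty conjunction $\top$) and the $\wedge$-clauses, contradicting $x_2 \rightsquigarrow_X x_1$.

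Clauses~5 and~6, the modal back-and-forth conditions, are where I expect the main obstacle and where image-finiteness is essential. The mechanism is the same finite-(dis)junction trick, but the separating formula must now be guarded by a modality, and one must isolate the correct finite successor set. For clause~5, from $a_2 R_{\Box_2}^c x_2$ and the $\Box$-clause one first observes that $\mathfrak{M}_2, a_2 \not\Vdash \Box\varphi$ whenever $\mathfrak{M}_2, x_2 \succ \varphi$; assuming no witness, I collect over the (image-finitely many) $x_1$ with $a_1 R_{\Box_1}^c x_1$ formulas $\psi_{x_1}$ separating $x_2$ from $x_1$, and show $\mathfrak{M}_1, a_1 \Vdash \Box\bigvee\psi_{x_1}$ while $\mathfrak{M}_2, a_2 \not\Vdash \Box\bigvee\psi_{x_1}$, contradicting $a_1 \rightsquigarrow_A a_2$. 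Clause~6 is the $\Diamond$-dual: from $x_1 R_{\Diamond_1}^c a_1$ one gets $\mathfrak{M}_2, x_2 \not\succ \Diamond\varphi$ for every $\varphi$ with $a_1 \Vdash \varphi$, and a finite conjunction $\Diamond\bigwedge\chi_{a_2}$ over the relevant finite set of $R_{\Diamond_2}$-successors yields a contradiction with $x_2 \rightsquigarrow_X x_1$.

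Two delicate points, both forced by the absence of negation, deserve care in the write-up: separation is only one-directional (an inclusion of theories, not an equivalence), so the choice between $\bigvee$/$\bigwedge$ and the empty-case conventions $\bot$/$\top$ must be aligned with the direction of $\rightsquigarrow$; and in clauses~5 and~6 one must invoke precisely the finiteness clause of Definition~\ref{def:image finite} matching the successor set actually quantified over, checking that the coordinate of the modal relation being fixed is the intended one. Finally, item~2 follows by running exactly the same construction for a simulation from $\mathfrak{M}_2$ to $\mathfrak{M}_1$, now recording the hypothesis $\mathfrak{M}_1, x_1 \rightsquigarrow_X \mathfrak{M}_2, x_2$ as the pair $(x_2,x_1)$ in the $X$-component, so that $\mathfrak{M}_1, x_1 \leftleftarrows \mathfrak{M}_2, x_2$.
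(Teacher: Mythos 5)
Your proposal is correct and takes essentially the same route as the paper's own proof: the backward directions are quoted from Theorem \ref{thm:simulation invariance}, and the forward directions show that the pair of modal-transfer relations $(\rightsquigarrow_A,\leftsquigarrow_X)$ is itself a simulation, using image-finiteness to build finite separating disjunctions under $\vee$ and $\Box$ (dually, conjunctions for the $I$- and $\Diamond$-clauses), exactly as the paper does for its clauses 3 and 5 with your empty-disjunction/conjunction conventions replacing the paper's preliminary $\bot$/$\Box\bot$ non-emptiness step. Your flagged delicate point about coordinates is apt: the paper's proof quantifies over $\{x'\mid a_1 R^c_{\Box_1}x'\}$ and $\{a'\mid x_2 R^c_{\Diamond_2}a'\}$, which match the finiteness clauses of Definition \ref{def:image finite} only after swapping the fixed coordinate, but this does not affect the substance of either argument.
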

\begin{proof}
The right to left implication for both items follows from Theorem \ref{thm:simulation invariance}. 
We prove the left to right implication only for the first item, as the proof for the second is analogous.

For the left to right implication of the first item, we claim that $(\rightsquigarrow_A,\leftsquigarrow_X)$ is a simulation from $\mathfrak{M}_{1}$ to $\mathfrak{M}_{2}$, where $(a, a')\in\rightsquigarrow_A$ (denoted as $a\rightsquigarrow_A a'$) iff $\mathfrak{M}_1, a\rightsquigarrow_A \mathfrak{M}_2, a'$ and $(x, x')\in\leftsquigarrow_X$ (denoted as $x\leftsquigarrow_X x'$) iff $\mathfrak{M}_1, x\leftsquigarrow_X \mathfrak{M}_2, x'$. Assume $a_{1}\rightsquigarrow_A a_{2}$ and $x_{1}\leftsquigarrow_X x_{2}$, the items 1 and 2 of Definition \ref{def:simulation and bisimulation} are satisfied immediately.

For item 3, assume $a_{1} \rightsquigarrow_A a_{2}$ and $a_{2}I^{c}_{2}x_{2}$. Therefore, $\mathfrak{M}_2, a_2\nVdash\bot$. By definition of $\rightsquigarrow_A $, we have $\mathfrak{M}_{1}, a_{1}\nVdash\bot$, which means that there exists $x_{1}$ such that $a_{1}I^{c}_{1}x_{1}$. Therefore, the set $Y':=\{x\mid a_{1}I^{c}_{1}x\}$ is a non-empty finite (by image-finiteness) set . Let $Y'$ be the set $\{x_{1}',\cdots ,x'_{n}\}$. Assume for any $x\in Y'$, it is not the case that $x\leftsquigarrow_X x_{2}$. So, there exist finitely many formulas $\psi_{1}, \cdots ,\psi_{n}$ such that for each $i$, $\mathfrak{M}_{2},x_2\succ\psi_{i}$ and $\mathfrak{M}_{1}, x'_{i}\nsucc\psi_{i}$.  As $a_2I^c_2 x_2$, there is $\mathfrak{M}_2, a_2\nVdash\psi_{1}\vee\cdots\vee\psi_{n}$. On the other hand, let $x'$ be any element in $ X_1$. Thus, $\mathfrak{M}_{1}, x'\succ\psi_{1}\vee \cdots \vee\psi_{n}$ implies $M_{1}, x'\succ\psi_{i}$ for each $i$, which means that $x'\notin Y'$, so $a_{1}I_{1}x'$. Hence, there is $\mathfrak{M}_{1}, a_{1}\Vdash\psi_{1}\vee...\vee\psi_{n}$, which contradicts $a_{1}\rightsquigarrow_A a_{2}$. So, there must exist $x_{1}$ such that $a_{1}I^{c}_{1}x_{1}$ and $x_{1}\leftsquigarrow_X x_{2}$. 

Item  4 can be proved similarly to item 3.


For item 5, assume $a_{1} \rightsquigarrow_A a_{2}$ and $a_{2}R^{c}_{\Box_{2}}x_{2}$. Therefore, $\mathfrak{M}_{2}, a_{2}\nVdash\Box\bot$, which implies $\mathfrak{M}_{1}, a_{1}\nVdash\Box\bot$. Therefore, there  exists $x_{1}\in X_1$ such that $a_{1}R^{c}_{\Box_{1}}x_{1}$. Thus, the set $Y':=\{x\mid a_{1}R^{c}_{\Box_{1}}x\}$ is a non-empty finite (by image-finiteness) set. Let $Y'$ be the set  $\{x'_{1},\cdots,x'_{n}\}$. Assume for any $x'\in Y'$, it is not the case that $x'\leftsquigarrow_X x_{2}$. Therefore, there exist finitely many formulas $\psi_{1},\cdots,\psi_{n}$ such that for each $i$, $\mathfrak{M}_{2}, x_{2}\succ\psi_{i}$ and $\mathfrak{M}_{1}, x'_{i}\nsucc\psi_{i}$. Therefore,  $\mathfrak{M}_{2}, x_{2}\succ\psi_{1}\vee \cdots \vee\psi_{n}$, which implies $\mathfrak{M}_{2}, a_{2}\nVdash\Box(\psi_{1}\vee \cdots \vee\psi_{n})$. On the other hand, let $y'$ be any element in $X_1$. $\mathfrak{M}_{1}, y'\succ\psi_{1}\vee \cdots \vee\psi_{n}$ implies $\mathfrak{M}_{1}, y'\Vdash\psi_{i}$ for each $i$, which means $y'\notin Y'$, so $a_{1}R_{\Box_{1}}y'$. Therefore, $\mathfrak{M}_{1}, a_{1}\Vdash \Box(\psi_{1}\vee \cdots \vee\psi_{n})$, which contradicts $a_{1}\rightsquigarrow_A a_{2}$. So, there must exist $x_{1}$ such that $a_{1}R^{c}_{\Box_{1}}x_{1}$ and $x_{1}\leftsquigarrow_X x_{2}$. 

Item 6 can be proved similarly to item 5.

\end{proof}
The above theorem can be seen as a generalization of Henessey-Milner theorem to non-distributive setting as it relates invariance under formulas to simulations. The following corollary is immediate from the above theorem.

\begin{corollary}\label{Cor:Hennesy-Milner}
Let $\mathfrak{M}_1=(\mathfrak{F}_1, V_1)$ and $\mathfrak{M}_2=(\mathfrak{F}_2, V_2)$ be any image-finite $\mathrm{LE}$-models, $a_1, x_1$ in $\mathfrak{M}_1$ and $a_2, x_2$ in $\mathfrak{M}_2$, then
 
1.~$\mathfrak{M}_1, a_1 \leftrightsquigarrow_A \mathfrak{M}_2, a_2$ if and only if $\mathfrak{M}_1, a_1\rightleftarrows\mathfrak{M}_2, a_2$.

2.~$\mathfrak{M}_1, x_1 \leftrightsquigarrow_X \mathfrak{M}_2, x_2$ if and only if $\mathfrak{M}_1, x_1\rightleftarrows\mathfrak{M}_2, x_2$.

\end{corollary}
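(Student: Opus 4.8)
The plan is to derive this corollary directly from Theorem \ref{thm:finite image implies bisimilar} by combining its two implications with the definitions of $\leftrightsquigarrow$ and $\rightleftarrows$. Recall that $\leftrightsquigarrow_A$ bundles together $\rightsquigarrow_A$ and $\leftsquigarrow_A$ (Definition \ref{def: modally equivalent}, items 4--5), and that $\rightleftarrows$ bundles together $\rightrightarrows$ and $\leftleftarrows$ (Definition \ref{def:bisimilar}). So the whole corollary is a matter of unpacking these conjunctions and applying the Hennessy--Milner theorem to each conjunct separately.

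For item 1, I would argue as follows. By definition, $\mathfrak{M}_1, a_1 \leftrightsquigarrow_A \mathfrak{M}_2, a_2$ means $\mathfrak{M}_1, a_1 \rightsquigarrow_A \mathfrak{M}_2, a_2$ and $\mathfrak{M}_1, a_1 \leftsquigarrow_A \mathfrak{M}_2, a_2$. The first conjunct, by Theorem \ref{thm:finite image implies bisimilar}(1), is equivalent to $\mathfrak{M}_1, a_1 \rightrightarrows \mathfrak{M}_2, a_2$. For the second conjunct, note that $\mathfrak{M}_1, a_1 \leftsquigarrow_A \mathfrak{M}_2, a_2$ is by definition $\mathfrak{M}_2, a_2 \rightsquigarrow_A \mathfrak{M}_1, a_1$, which by Theorem \ref{thm:finite image implies bisimilar}(1) applied in the reversed direction (both models are image-finite) is equivalent to $\mathfrak{M}_2, a_2 \rightrightarrows \mathfrak{M}_1, a_1$, i.e.\ $\mathfrak{M}_1, a_1 \leftleftarrows \mathfrak{M}_2, a_2$. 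Conjoining the two equivalences and using Definition \ref{def:bisimilar} yields exactly $\mathfrak{M}_1, a_1 \rightleftarrows \mathfrak{M}_2, a_2$. Item 2 is entirely analogous, using Theorem \ref{thm:finite image implies bisimilar}(2) for each conjunct: the first conjunct $\mathfrak{M}_1, x_1 \rightsquigarrow_X \mathfrak{M}_2, x_2$ gives $\mathfrak{M}_1, x_1 \leftleftarrows \mathfrak{M}_2, x_2$, and the second conjunct $\mathfrak{M}_1, x_1 \leftsquigarrow_X \mathfrak{M}_2, x_2 = \mathfrak{M}_2, x_2 \rightsquigarrow_X \mathfrak{M}_1, x_1$ gives $\mathfrak{M}_2, x_2 \leftleftarrows \mathfrak{M}_1, x_1$, i.e.\ $\mathfrak{M}_1, x_1 \rightrightarrows \mathfrak{M}_2, x_2$.

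There is essentially no real obstacle here, since the corollary is a formal consequence of the theorem; the only point requiring a little care is the \emph{symmetry bookkeeping} in item 2. Because Theorem \ref{thm:finite image implies bisimilar} pairs $\rightsquigarrow_A$ with the simulation arrow $\rightrightarrows$ but pairs $\rightsquigarrow_X$ with the \emph{reversed} simulation arrow $\leftleftarrows$ (reflecting that simulations preserve $\Vdash$-formulas forward on the $A$-side but reflect $\succ$-formulas on the $X$-side, as in Theorem \ref{thm:simulation invariance}), one must be careful to invoke the correct item when translating each $\leftsquigarrow$ conjunct into its arrow form. I would state both directions of each biconditional explicitly to make sure the forward/backward pairing is tracked correctly, and then simply note that conjoining the relevant equivalences and reading off Definition \ref{def:bisimilar} completes the proof.
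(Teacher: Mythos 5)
Your proposal is correct and is exactly the argument the paper intends: the paper states the corollary as "immediate from the above theorem," and your unpacking of $\leftrightsquigarrow$ and $\rightleftarrows$ into their conjuncts, followed by componentwise application of Theorem \ref{thm:finite image implies bisimilar} (with the models swapped for the $\leftsquigarrow$ conjuncts), is the intended derivation. Your attention to the pairing of $\rightsquigarrow_X$ with $\leftleftarrows$ rather than $\rightrightarrows$ on the $X$-side is precisely the one point of bookkeeping the paper leaves implicit.
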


As the simulations on $\mathrm{LE}$-models can be seen as a generalizations of simulations on Kripke models, the converse of Theorem \ref{thm:simulation invariance} does not always hold. That is, image-finiteness condition in  Theorem \ref{thm:finite image implies bisimilar} can not be dropped. 
\begin{figure}
    \centering
        \includegraphics[scale=0.1]{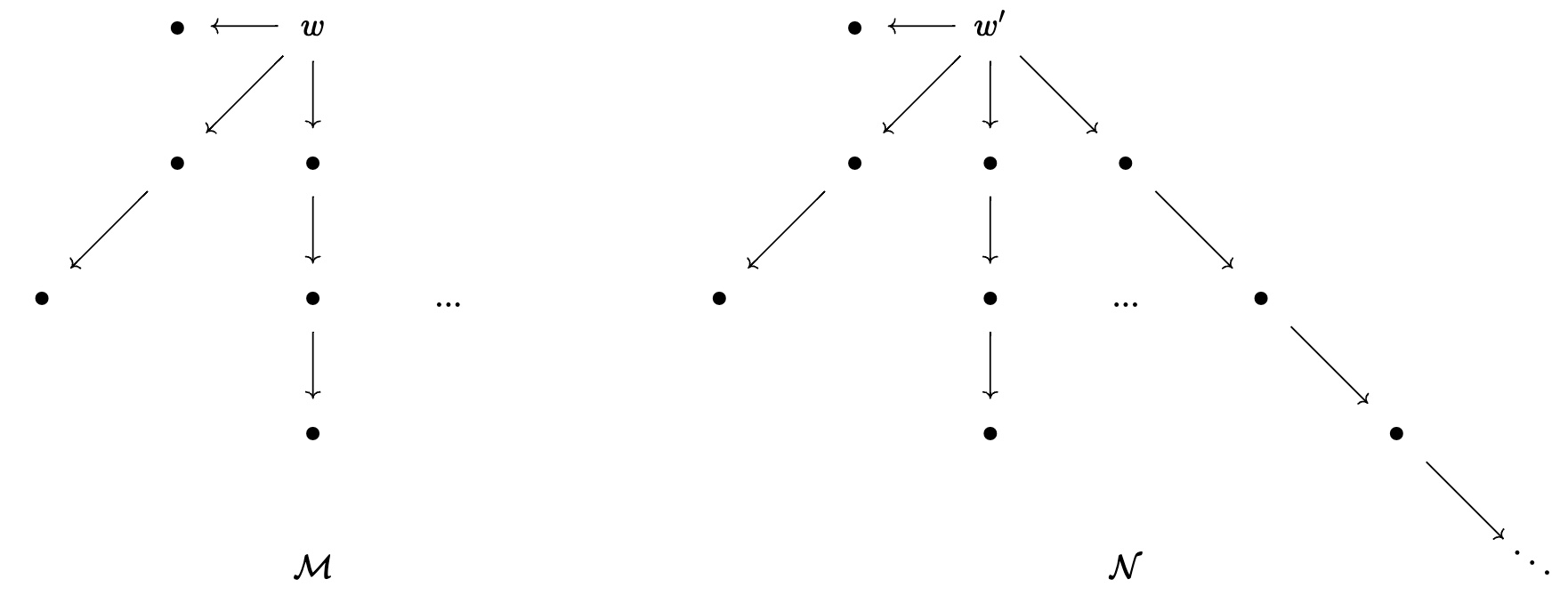}
    \caption{Kripke models $\mathcal{M}$ and $\mathcal{N}$}
    \label{fig:counter-example 2}
\end{figure}

\begin{example}
Consider two Kripke models $\mathcal{M}$ and $\mathcal{N}$ in Fig \ref{fig:counter-example 2}, where any $p\in\atprop$ is false at all the  points. It is easy to check that $w$ and $w'$ are modally equivalent in $\mathcal{M}$ and $\mathcal{N}$. Therefore ${w'}_A\rightsquigarrow_A w_A$ on their corresponding $\mathrm{LE}$-models $\mathrm{LE}(\mathcal{N})$ and $\mathrm{LE}(\mathcal{M})$. Assume that there exists a simulation $(S, T)$ from $\mathrm{LE}(\mathcal{N})$ to $\mathrm{LE}(\mathcal{M})$ such that ${w'}_ASw_A$. Then, $S$ must be a simulation from $\mathcal{N}$ to $\mathcal{M}$. However, it is not possible to have simulations from $\mathcal{N}$ to $\mathcal{M}$, so there is a contradiction. Therefore, there is no simulation from $\mathrm{LE}(\mathcal{N})$ to $\mathrm{LE}(\mathcal{M})$ which can link ${w'}_A$ and $w_A$.  This  shows that converse of item 1 in Theorem \ref{thm:finite image implies bisimilar} is not true in general. We can similarly show that converse of item 2 in Theorem \ref{thm:finite image implies bisimilar} need not hold in general. 
\end{example}

\section{Modal Saturation via Filter-Ideal Extensions}\label{sec:Modal Saturation via Filter-Ideal Extensions}

In this section, we continue to investigate the relation between modal equivalence and bisimilarity. We first introduce the notion of modally-saturated ($\mathrm{M}$-saturated) $\mathrm{LE}$-models and the Hennessy-Milner property. Then, we show that the class of $\mathrm{M}$-saturated $\mathrm{LE}$-models has the Hennessy-Milner property. After that, we introduce the notion of filter-ideal extension of $\mathrm{LE}$-models and show that any points in $\mathrm{LE}$-models are modally equivalent if and only if they are bisimilar on their filter-ideal extensions.

\subsection{Modally-saturated \texorpdfstring{$\mathrm{LE}$}{LE}-models and Hennessy-Milner Property}\label{sec:Modal Saturation via Ultrafilter Extensions}
\begin{definition}\label{def: HMP}
    Let $\mathrm{K}$ be a class of $\mathrm{LE}$-models. $\mathrm{K}$ is a {\em Hennessy-Milner class} or has the {\em Hennessy-Milner property} if for any $\mathrm{LE}$-models $\mathfrak{M}_1, \mathfrak{M}_2\in\mathrm{K}$, and $a_1$, $x_1$ in $\mathfrak{M}_1$, and $a_2$, $x_2$ in $\mathfrak{M}_2$,

1.~$\mathfrak{M}_1, a_1\rightsquigarrow_X \mathfrak{M}_2, a_2$ implies $\mathfrak{M}_1, a_1\rightrightarrows\mathfrak{M}_2, a_2$, and

2.~$\mathfrak{M}_1, x_1\rightsquigarrow_A \mathfrak{M}_2, x_2$ implies $\mathfrak{M}_1, x_1\leftleftarrows\mathfrak{M}_2, x_2$.

\end{definition}

Theorem \ref{thm:finite image implies bisimilar} shows that image-finite $\mathrm{LE}$-models have the Hennessy-Milner property; we now introduce a larger class of models which retains the property.

\begin{definition}\label{def:M-saturation}
    Let $\mathfrak{M}=(A, X, I, {R_\Box}, {R_{\Diamond}}, V)$ be an $\mathrm{LE}$-model, $A'\subseteq A$ and $X'\subseteq X$. Let $\Sigma$ be a set of $\mathcal{L}$-formulas. $\Sigma$ is {\em satisfiable} in $A'$ (resp.\ $X'$) if for any $\varphi\in\Sigma$, there exists a point $a$ (resp.\ $x$) such that, $\mathfrak{M}, a\Vdash\varphi$ (resp.\ $\mathfrak{M}, x\succ\varphi$). $\Sigma$ is {\em finitely satisfiable} in $A'$ (resp.\ $X'$) if any finite subset of $\Sigma$ is satisfiable in $A'$ (resp.\ $X'$).  $\mathfrak{M}$ is {\em modally-saturated} ($\mathrm{M}$-saturated for short) if for any set of formulas $\Sigma$, $a\in A$ and $x\in X$ the following conditions hold:
    
\!\!\!\!\!\!\!1.~If $\Sigma$ is finitely satisfiable in $\{x'\mid aI^c x'\}$, then $\Sigma$ is satisfiable in  $\{x'\mid aI^c x'\}$.

\!\!\!\!\!\!\!2.~If $\Sigma$ is finitely satisfiable in $\{a'\mid a'I^c x\}$, then $\Sigma$ is satisfiable in $\{a'\mid a'I^c x\}$.

\!\!\!\!\!\!\!3.~If $\Sigma$ is finitely satisfiable in $\{x'\mid aR^c_\Box x'\}$, then $\Sigma$ is satisfiable in  $\{x'\mid aR^c_\Box x'\}$.

\!\!\!\!\!\!\!4.~If $\Sigma$ is finitely satisfiable in $\{a'\mid xR^c_\Diamond a'\}$, then $\Sigma$ is satisfiable in $\{a'\mid xR^c_\Diamond a'\}$.
\end{definition}

\begin{theorem}\label{Thm:M-saturated is HM}
    The class of $\mathrm{M}$-saturated models has Hennessy-Milner property.
\end{theorem}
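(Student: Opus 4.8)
The plan is to mirror the proof of Theorem~\ref{thm:finite image implies bisimilar}, replacing every appeal to image-finiteness with an appeal to modal saturation. Fix $\mathrm{M}$-saturated models $\mathfrak{M}_1$ and $\mathfrak{M}_2$. Exactly as in the Hennessy-Milner theorem, I would claim that the pair $(\rightsquigarrow_A,\leftsquigarrow_X)$ — where $a\rightsquigarrow_A a'$ abbreviates $\mathfrak{M}_1, a\rightsquigarrow_A \mathfrak{M}_2, a'$ and $x\leftsquigarrow_X x'$ abbreviates $\mathfrak{M}_1, x\leftsquigarrow_X \mathfrak{M}_2, x'$ — is a simulation from $\mathfrak{M}_1$ to $\mathfrak{M}_2$ in the sense of Definition~\ref{def:simulation and bisimulation}. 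The first item of Definition~\ref{def: HMP} is then immediate: if $a_1\rightsquigarrow_A a_2$, then this very pair witnesses a simulation linking $a_1$ and $a_2$, so $\mathfrak{M}_1, a_1\rightrightarrows\mathfrak{M}_2, a_2$. The second item follows by applying the same claim with the roles of $\mathfrak{M}_1$ and $\mathfrak{M}_2$ interchanged: assuming $\mathfrak{M}_1, x_1\rightsquigarrow_X \mathfrak{M}_2, x_2$, i.e.\ $\mathfrak{M}_2, x_2\leftsquigarrow_X \mathfrak{M}_1, x_1$, the simulation from $\mathfrak{M}_2$ to $\mathfrak{M}_1$ links $x_2$ to $x_1$, giving $\mathfrak{M}_2, x_2\rightrightarrows\mathfrak{M}_1, x_1$, that is $\mathfrak{M}_1, x_1\leftleftarrows\mathfrak{M}_2, x_2$.

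Conditions~1 and~2 of Definition~\ref{def:simulation and bisimulation} hold by reading off the definitions of $\rightsquigarrow_A$ and $\leftsquigarrow_X$ on propositional variables, just as in the image-finite case. The substance lies in conditions~3--6, each verified by one common template differing only in which relation, which satisfaction clause, and which clause of Definition~\ref{def:M-saturation} is invoked. I would treat condition~3 in full and indicate the rest by duality. So assume $a_1\rightsquigarrow_A a_2$ and $a_2 I^c_2 x_2$; the goal is to produce $x_1$ with $a_1 I^c_1 x_1$ and $x_1\leftsquigarrow_X x_2$. Put $Y':=\{x\mid a_1 I^c_1 x\}$ and $\Sigma:=\{\varphi\mid \mathfrak{M}_2, x_2\succ\varphi\}$. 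Finding such an $x_1$ is precisely finding a point of $Y'$ at which all of $\Sigma$ holds (in the $\succ$ sense), i.e.\ showing $\Sigma$ is satisfiable in $Y'$; hence by clause~1 of Definition~\ref{def:M-saturation} applied to the $\mathrm{M}$-saturated $\mathfrak{M}_1$, it suffices to prove $\Sigma$ finitely satisfiable in $Y'$.

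For finite satisfiability, take $\psi_1,\dots,\psi_n\in\Sigma$ and suppose, toward a contradiction, that no single $x\in Y'$ satisfies all of them. Since $x\succ\psi_1\vee\cdots\vee\psi_n$ iff $x\succ\psi_i$ for every $i$, this says that every $x$ with $x\succ\psi_1\vee\cdots\vee\psi_n$ lies outside $Y'$, i.e.\ satisfies $a_1 I_1 x$; by the satisfaction clause for $\vee$ on the $A$-side this is exactly $\mathfrak{M}_1, a_1\Vdash\psi_1\vee\cdots\vee\psi_n$, whence $\mathfrak{M}_2, a_2\Vdash\psi_1\vee\cdots\vee\psi_n$ by $a_1\rightsquigarrow_A a_2$. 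On the other hand $\mathfrak{M}_2, x_2\succ\psi_i$ for each $i$ gives $\mathfrak{M}_2, x_2\succ\psi_1\vee\cdots\vee\psi_n$, and the $\Vdash$-clause for a disjunction then forces $a_2 I_2 x_2$, contradicting $a_2 I^c_2 x_2$ (taking $\psi_1=\bot$ in particular shows $Y'\neq\emptyset$). Thus $\Sigma$ is finitely satisfiable in $Y'$, and $\mathrm{M}$-saturation delivers the required $x_1$. Condition~5 runs identically with $R_\Box$ in place of $I$, the $\Box$-clause in place of the $\vee$-clause, and clause~3 of Definition~\ref{def:M-saturation} (again in $\mathfrak{M}_1$). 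The dual conditions~4 and~6 instead collect $\Sigma:=\{\varphi\mid \mathfrak{M}_1, a_1\Vdash\varphi\}$, replace disjunctions by conjunctions $\psi_1\wedge\cdots\wedge\psi_n$ (using the $\wedge$-clauses on both sorts), and apply clauses~2 and~4 of Definition~\ref{def:M-saturation} to $\mathfrak{M}_2$.

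The main obstacle is conceptual rather than computational: in the image-finite proof, finiteness of the successor set lets one form a single witnessing disjunction (or conjunction) over all successors and derive a contradiction directly, whereas here the successor set may be infinite. The key move is to re-package that Boolean argument so that it no longer yields a contradiction outright but instead establishes \emph{finite} satisfiability of one fixed (possibly infinite) type $\Sigma$, with $\mathrm{M}$-saturation supplying the single realizing point. Carrying this out correctly requires matching each of the four back-and-forth clauses with the right saturation clause, the right model ($\mathfrak{M}_1$ for the $A$-driven clauses~3,~5 and $\mathfrak{M}_2$ for the $X$-driven clauses~4,~6), and the right connective ($\vee$ on the $A$-side versus $\wedge$ on the $X$-side); this bookkeeping, together with verifying that the satisfaction clauses line up so that the final step produces the desired $I$-, $R_\Box$-, or $R_\Diamond$-contradiction, is the delicate part of the argument.
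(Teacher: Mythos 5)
Your proposal is correct and follows the paper's own proof essentially verbatim: both take $(\rightsquigarrow_A,\leftsquigarrow_X)$ as the candidate simulation, verify conditions~3 and~5 via finite disjunctions (placed under $\Box$ for condition~5) together with saturation clauses~1 and~3 applied to $\mathfrak{M}_1$, and handle conditions~4 and~6 dually with conjunctions and clauses~2 and~4 applied to $\mathfrak{M}_2$. The only difference is cosmetic: you establish finite satisfiability by contradiction, whereas the paper argues directly, using the contrapositive of $\rightsquigarrow_A$ to pass from $\mathfrak{M}_2, a_2'\nVdash\bigvee\Delta$ to $\mathfrak{M}_1, a_1'\nVdash\bigvee\Delta$ and read off the witness.
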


\begin{proof}
    See Appendix \ref{proof of M-saturated is HM}.
\end{proof}

\subsection{Filter-Ideal Extension of \texorpdfstring{$\mathrm{LE}$}{LE}-Models}
Let $\mathbf{A}=(\mathbf{L}, \Box, \Diamond)$ be any $\mathrm{LE}$-algebra (cf.~\cite[Definition 1]{conradie2018goldblatt}). $\mathbf{A}_+=(\mathsf{Fi}_\mathbf{A},\mathsf{Id}_\mathbf{A},I,R_\Box, R_\Diamond)$ is the {\em filter-ideal frame} of $\mathbf{A}$ (c.f.~\cite[Definition 29]{conradie2018goldblatt}), where $\mathsf{Fi}_\mathbf{A}$ (resp.~$\mathsf{Id}_\mathbf{A}$) is the set of all filters (resp.~ideals) of $\mathbf{L}$ such that for any $F \in \mathsf{Fi}_\mathbf{A}$ and $J \in \mathsf{Id}_\mathbf{A}$,

1.~$F I J$ if and only if $F \cap J \neq \emptyset$.

2.~$F R_\Box J$ if and only if there exists $a\in J$ such that $\Box a \in F$.

3.~$J R_\Diamond F$ if and only if there exists $a \in F$ such that $\Diamond a \in J$.

\begin{definition}
Let $\mathfrak{M}=(\mathfrak{F},V)$ be any $\mathrm{LE}$-model with $\mathfrak{F}=(A,X,I, R_\Box, R_\Diamond)$. The {\em filter-ideal extension} of $\mathfrak{M}$ is the tuple $\mathfrak{M}^{\mathsf{FI}}=(\mathfrak{F}^{\mathsf{FI}}, V^{\mathsf{FI}})$, where $\mathfrak{F}^{\mathsf{FI}} =(\mathfrak{F}^+)_+ $ and for any $p\in\atprop$, $V^{\mathsf{FI}}(p)=(\val{V^{\mathsf{FI}}(p)},\descr{V^{\mathsf{FI}}(p)})=(\{F \mid (\val{p},\descr{p}) \in F\}, \{J \mid (\val{p},\descr{p}) \in J\})$.
\end{definition}

\begin{remark}
In order to check that $\mathfrak{M}^{\mathsf{FI}}$ is well defined we need to check that for any $p\in\atprop$,  $(\val{V^{\mathsf{FI}}(p)},\descr{V^{\mathsf{FI}}(p)})$ indeed forms a concept of $\mathfrak{F}$. We only prove  $\val{V^{\mathsf{FI}}(p)}^\uparrow=\descr{V^{\mathsf{FI}}(p)}$ here. The proof for  $\descr{V^{\mathsf{FI}}(p)}^\downarrow=\val{V^{\mathsf{FI}}(p)}$ is similar. The right to left inclusion is trivial. As for inclusion in other direction, suppose there exists $J\in \val{V^{\mathsf{FI}}(p)}^\uparrow$ and $J\not\in \descr{V^{\mathsf{FI}}(p)}$. Thus, $(\val{p},\descr{p}) \not\in J$ and for any filter $F$ which contains $(\val{p},\descr{p})$, $F\cap J\neq \emptyset$. As $(\val{p},\descr{p}) \not\in J$,  by the filter-ideal theorem, there exists a filter $F'$ such that $(\val{p},\descr{p}) \in F'$ and $F'\cap J=\emptyset$. This  contradicts our assumption. So, the left to right inclusion holds. 
\end{remark}
The following lemma shows that the filter-ideal extensions of $\mathrm{LE}$-models preserve modal satisfaction.
\begin{lemma}\label{lem:Filter-ideal satisfaction}
Let $\mathfrak{M}=(\mathfrak{F},V)$ be any $\mathrm{LE}$-model where $\mathfrak{F}=(A,X,I, R_\Box, R_\Diamond)$, and $\mathfrak{M}^{\mathsf{FI}} =(\mathfrak{F}^{\mathsf{FI}},V^{\mathsf{FI}})$ is the filter-ideal extension of $\mathfrak{M}$. Let $F$ be a filter of $\mathfrak{F}^+$ and $J$ be an ideal of $\mathfrak{F}^+$. Then
\smallskip

{{\centering 
$\mathfrak{M}^{\mathsf{FI}}, F\Vdash\varphi$ iff $(\val{\varphi},\descr{\varphi})\in F$
\quad and \quad
$\mathfrak{M}^{\mathsf{FI}}, J\succ\varphi$ iff $(\val{\varphi},\descr{\varphi})\in J$.
\par}}
\end{lemma}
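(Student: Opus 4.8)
The plan is to prove both biconditionals simultaneously by induction on the structure of $\varphi$, reading $(\val{\varphi},\descr{\varphi})$ as the element of the complex algebra $\mathfrak{F}^+$ assigned to $\varphi$ by the homomorphic extension of $V$, so that $(\val{\psi\wedge\chi},\descr{\psi\wedge\chi})=(\val{\psi},\descr{\psi})\wedge(\val{\chi},\descr{\chi})$, $(\val{\Box\psi},\descr{\Box\psi})=\Box^{\mathfrak{P}^+}(\val{\psi},\descr{\psi})$, and similarly for the other connectives. Before starting, I would record two order-theoretic facts about the filter--ideal frame $(\mathfrak{F}^+)_+$ that discharge every \emph{cross-sort} clause (those in which satisfaction of a formula at one sort is defined through the relation $I$ and the satisfaction of the same formula at the other sort): for every $m\in\mathfrak{F}^+$, (a) $m\in J$ iff $F\cap J\neq\emptyset$ for all filters $F$ with $m\in F$; and (b) $m\in F$ iff $F\cap J\neq\emptyset$ for all ideals $J$ with $m\in J$. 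The nontrivial direction of (a) is obtained by instantiating the principal filter ${\uparrow}m=\{a\mid m\leq a\}$ and using downward closure of $J$; (b) is dual, via the principal ideal ${\downarrow}m=\{a\mid a\leq m\}$ and upward closure of $F$.

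For the base cases, $\varphi=p$ holds by the definition of $V^{\mathsf{FI}}$. For $\varphi=\top$, the clause $F\Vdash\top$ and the statement $\top\in F$ both always hold (filters contain $\top$), while $J\succ\top$, which unfolds to $F\,I\,J$ for every filter $F$, is equivalent by (a) to $\top\in J$; the case $\varphi=\bot$ is symmetric via (b). The lattice connectives split into a \emph{primary} clause (defined directly from the subformulas) and a \emph{derived} clause (defined from the same formula at the other sort). For $\wedge$, the primary clause $F\Vdash\psi\wedge\chi$ reduces by the induction hypothesis to $(\val{\psi},\descr{\psi})\in F$ and $(\val{\chi},\descr{\chi})\in F$, equivalently $(\val{\psi\wedge\chi},\descr{\psi\wedge\chi})\in F$, since filters are meet-closed and upward closed; the derived clause $J\succ\psi\wedge\chi$ then follows from (a). The case $\vee$ is dual, with $\succ$ primary (ideals being join-closed and downward closed) and $\Vdash$ derived from (b).

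The heart of the proof is the two modal cases, where the relational clauses of the filter--ideal frame must be matched with the algebraic operators. Writing $c=(\val{\psi},\descr{\psi})$, the primary clause $F\Vdash\Box\psi$ unfolds to: for every ideal $J$, if $c\in J$ (induction hypothesis for $J\succ\psi$) then $F\,R_\Box\,J$, i.e.\ there is $a\in J$ with $\Box^{\mathfrak{P}^+}a\in F$. I would show this is equivalent to $\Box^{\mathfrak{P}^+}c\in F$: the direction from $\Box^{\mathfrak{P}^+}c\in F$ takes the witness $a=c$ on any $J$ containing $c$, and the converse instantiates $J={\downarrow}c$, extracts some $a\leq c$ with $\Box^{\mathfrak{P}^+}a\in F$, and concludes $\Box^{\mathfrak{P}^+}c\in F$ using that $\Box^{\mathfrak{P}^+}$ is monotone (being meet-preserving) and $F$ is upward closed. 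The derived clause $J\succ\Box\psi$ then follows from (a). The case $\Diamond\psi$ is entirely dual: the primary clause $J\succ\Diamond\psi$ is matched with $\Diamond^{\mathfrak{P}^+}c\in J$, using the principal filter ${\uparrow}c$, monotonicity of $\Diamond^{\mathfrak{P}^+}$ (being join-preserving), and downward closure of $J$; the derived clause $F\Vdash\Diamond\psi$ follows from (b).

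The main obstacle is exactly these modal steps: one must ensure that the existential witness in $R_\Box$ (resp.\ $R_\Diamond$) can always be chosen to be the interpretant $c$ itself, and this is precisely where monotonicity of the operators, combined with the choice of a principal ideal (resp.\ filter), is used. The remaining care is bookkeeping for the simultaneous induction: in each step the primary clause appeals to the induction hypothesis on proper subformulas, while the derived clause appeals to the just-established primary clause of the same formula through (a) or (b), so no circularity arises. It is worth noting that, unlike the well-definedness remark preceding the lemma, the prime filter--ideal separation theorem is not needed here; principal filters and ideals together with monotonicity of the modal operators suffice.
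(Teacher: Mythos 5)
Your proof is correct: the simultaneous induction on formula complexity, with the cross-sort clauses discharged via principal filters/ideals and the modal cases handled through monotonicity of $\Box^{\mathfrak{P}^+}$ and $\Diamond^{\mathfrak{P}^+}$, all checks out. The paper itself dismisses this lemma as ``straight-forward by induction on the complexity of formulas,'' so your argument is exactly the intended proof, with the details (correctly) filled in.
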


\begin{proof}
It is straight-forward by induction on the complexity of formulas.
\end{proof}

\noindent For any $\mathrm{LE}$-frame $\mathfrak{F}$ and $\mathrm{K}\subseteq \mathfrak{F}^+$, let 
$\mathsf{Fi}(\mathrm{K})$ (resp.~$\mathsf{Id}(\mathrm{K})$) denote the filter (resp.~the ideal) generated by $\mathrm{K}$. When $\mathrm{K}=\{k\}\subseteq \mathfrak{F}^+$, let $\mathsf{Fi}(k)$ (resp.~$\mathsf{Id}(k)$) denote the principal filter (resp.\ principal ideal) $\mathsf{Fi}(\{k\})$ (resp.~$\mathsf{Id}(\{k\})$).

\begin{corollary}[truth lemma]\label{cor:truth lemma}
Let $\mathfrak{M}=(\mathfrak{F},V)$ be any $\mathrm{LE}$-model where $\mathfrak{F}=(A,X,I, R_\Box, R_\Diamond)$, and $\mathfrak{M}^{\mathsf{FI}}$ be the filter-ideal extension of $\mathfrak{M}$. For all $\varphi\in\mathcal{L}$,  

1.~For any $a \in A$, $\mathfrak{M}, a \Vdash \varphi$ if and only if $\mathfrak{M}^{\mathsf{FI}}, \mathsf{Fi}(\mathbf{a}) \Vdash \varphi$.

2.~For any $x \in X$, $\mathfrak{M}, x \succ \varphi$ if and only if $\mathfrak{M}^{\mathsf{FI}}, \mathsf{Id}(\mathbf{x}) \succ \varphi$.

\end{corollary}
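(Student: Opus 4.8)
The plan is to derive both items as immediate specializations of Lemma~\ref{lem:Filter-ideal satisfaction}: I would apply that lemma with the generic filter $F$ instantiated to the principal filter $\mathsf{Fi}(\mathbf{a})$ and the generic ideal $J$ instantiated to the principal ideal $\mathsf{Id}(\mathbf{x})$, and then translate membership in a principal filter (resp.~ideal) back into a statement about the point $a$ (resp.~$x$) using the order on $\mathfrak{F}^+$ and the Galois closure.

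For item~1, applying Lemma~\ref{lem:Filter-ideal satisfaction} to the filter $\mathsf{Fi}(\mathbf{a})$ gives that $\mathfrak{M}^{\mathsf{FI}}, \mathsf{Fi}(\mathbf{a}) \Vdash \varphi$ holds iff $(\val{\varphi},\descr{\varphi}) \in \mathsf{Fi}(\mathbf{a})$. Since $\mathsf{Fi}(\mathbf{a})$ is the principal filter generated by the concept $\mathbf{a}=(a^{\uparrow\downarrow}, a^\uparrow)$, this membership is equivalent to $\mathbf{a} \leq (\val{\varphi},\descr{\varphi})$ in the concept lattice, which by the definition of the order on concepts ($c\leq d$ iff $\val{c}\subseteq\val{d}$) unfolds to $a^{\uparrow\downarrow} = \val{\mathbf{a}} \subseteq \val{\varphi}$. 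The final step is the equivalence $a^{\uparrow\downarrow} \subseteq \val{\varphi}$ iff $a \in \val{\varphi}$, which is exactly $\mathfrak{M}, a \Vdash \varphi$: the right-to-left direction uses that $\val{\varphi}$ is Galois-stable (being $\val{\overline V(\varphi)}$), so $a\in\val{\varphi}$ yields $a^{\uparrow\downarrow}=\{a\}^{\uparrow\downarrow}\subseteq\val{\varphi}^{\uparrow\downarrow}=\val{\varphi}$ by monotonicity of the closure; the left-to-right direction uses the elementary fact $a\in\{a\}^{\uparrow\downarrow}=a^{\uparrow\downarrow}$.

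Item~2 is entirely dual. I would apply Lemma~\ref{lem:Filter-ideal satisfaction} to the principal ideal $\mathsf{Id}(\mathbf{x})$, rewrite $(\val{\varphi},\descr{\varphi}) \in \mathsf{Id}(\mathbf{x})$ as $(\val{\varphi},\descr{\varphi}) \leq \mathbf{x}$, and then use the \emph{contravariant} clause of the concept order ($c\leq d$ iff $\descr{d}\subseteq\descr{c}$) to pass to $x^{\downarrow\uparrow} = \descr{\mathbf{x}} \subseteq \descr{\varphi}$. The argument closes with the Galois-stability equivalence $x^{\downarrow\uparrow} \subseteq \descr{\varphi}$ iff $x \in \descr{\varphi}$, which is exactly $\mathfrak{M}, x \succ \varphi$.

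There is no genuine obstacle: the corollary is a bookkeeping consequence of Lemma~\ref{lem:Filter-ideal satisfaction}. The only points demanding care are keeping the two dualities aligned — filters are up-sets and extensions are covariant with the concept order, whereas ideals are down-sets and intensions are contravariant — and correctly invoking, for both sorts, the standard correspondence between $\mathfrak{M},a\Vdash\varphi$ (resp.~$\mathfrak{M},x\succ\varphi$) and membership of $a$ in $\val{\varphi}$ (resp.~$x$ in $\descr{\varphi}$), together with the fact that each point lies in its own Galois closure.
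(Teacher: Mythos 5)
Your proof is correct and follows essentially the same route as the paper's: both run the chain $\mathfrak{M}^{\mathsf{FI}}, \mathsf{Fi}(\mathbf{a}) \Vdash \varphi \Leftrightarrow (\val{\varphi},\descr{\varphi})\in\mathsf{Fi}(\mathbf{a}) \Leftrightarrow \mathbf{a}\leq(\val{\varphi},\descr{\varphi}) \Leftrightarrow \mathfrak{M},a\Vdash\varphi$ (and its dual for ideals), resting on Lemma~\ref{lem:Filter-ideal satisfaction}, the definition of principal filters/ideals, and the correspondence between satisfaction and the concept-lattice order. The only difference is cosmetic: you traverse the equivalences in the opposite direction and spell out, via Galois stability and $a\in a^{\uparrow\downarrow}$, the step that the paper compresses into ``the relationship between $\Vdash$, $\succ$ and the order on the complex algebra.''
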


\begin{proof}

\smallskip
{{
\centering
\begin{tabular}{ccccccc}
  $ \mathfrak{M}, a\Vdash\varphi $ & $\Leftrightarrow $ & $\mathbf{a}\leq (\val{\varphi}, \descr{\varphi})$ & \quad & $ \mathfrak{M},x\succ\varphi$ &$\Leftrightarrow $   &$(\val{\varphi}, \descr{\varphi})\leq \mathbf{x}$ \\
          & $\Leftrightarrow$ & $ (\val{\varphi}, \descr{\varphi})\in \mathsf{Fi}(\mathbf{a})$ & \quad &  & $\Leftrightarrow$ & $ (\val{\varphi}, \descr{\varphi})\in \mathsf{Id}(\mathbf{x})$ \\
        & $\Leftrightarrow$ & $\mathfrak{M}^{\mathsf{FI}}, \mathsf{Fi}(\mathbf{a}) \Vdash \varphi$ & \quad & & $\Leftrightarrow $ & $\mathfrak{M}^{\mathsf{FI}}, \mathsf{Id}(\mathbf{x}) \succ \varphi$ \\
\end{tabular}
\par
}}
\smallskip

\noindent The first equivalences follow from the relationship between $\Vdash$ and $\succ$ on a polarity-based model and order $\leq$ on its complex algebra. The second equivalence follows from the definitions of $\mathsf{Fi}(\mathbf{a})$ and $\mathsf{Id}(\mathbf{x})$, and the third equivalence follows from Lemma \ref{lem:Filter-ideal satisfaction}.
\end{proof}

The above lemma implies that for any $\mathrm{LE}$-model $\mathfrak{M}$ with filter-ideal extension $\mathfrak{M}^{\mathsf{FI}}$, for any $a\in A$, and $x\in X$, 
 $a \leftrightsquigarrow_A\mathsf{Fi}(\mathbf{a})$ and $x \leftrightsquigarrow_X\mathsf{Id}(\mathbf{x})$. 

\begin{lemma}\label{lem:extension_M-saturaed}
The filter-ideal extension of an $\mathrm{LE}$-model is $\mathrm{M}$-saturated.
\end{lemma}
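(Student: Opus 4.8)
The plan is to reduce everything to the membership characterisation of satisfaction in $\mathfrak{M}^{\mathsf{FI}}$ given by Lemma \ref{lem:Filter-ideal satisfaction}. Write $\mathbf{A} := \mathfrak{F}^+$ and, for $\varphi\in\mathcal{L}$, let $c_\varphi := (\val{\varphi},\descr{\varphi})\in\mathbf{A}$ be the element interpreting $\varphi$. By Lemma \ref{lem:Filter-ideal satisfaction}, for a filter $F$ and ideal $J$ of $\mathbf{A}$ we have $\mathfrak{M}^{\mathsf{FI}}, F\Vdash\varphi$ iff $c_\varphi\in F$, and $\mathfrak{M}^{\mathsf{FI}}, J\succ\varphi$ iff $c_\varphi\in J$. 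Hence ``$\Sigma$ is satisfiable in a target set of points'' translates into ``there is a filter (resp.\ ideal) in that set containing $\{c_\varphi\mid\varphi\in\Sigma\}$'', and I unfold the relations of the filter-ideal frame via their definitions: $F I^c J$ iff $F\cap J=\emptyset$; $F R^c_\Box J$ iff $\Box b\notin F$ for every $b\in J$; and $J R^c_\Diamond F$ iff $\Diamond b\notin J$ for every $b\in F$.

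For each of the four conditions I take as witness the filter or ideal generated by $\{c_\varphi\mid\varphi\in\Sigma\}$: for conditions 1 and 3 (where the sought point lies on the $X$-side) the ideal $J:=\mathsf{Id}(\{c_\varphi\mid\varphi\in\Sigma\})$, and for conditions 2 and 4 (point on the $A$-side) the filter $F:=\mathsf{Fi}(\{c_\varphi\mid\varphi\in\Sigma\})$. By construction the witness contains every $c_\varphi$, so by the truth correspondence it satisfies all of $\Sigma$ at once; the only thing to verify is that it lies in the relevant target set, and this is where finite satisfiability enters. Concretely, for condition 1 (the point $a$ is a filter $F_0$, target set $\{J'\mid F_0\cap J'=\emptyset\}$) I argue by contradiction: if $J\cap F_0\neq\emptyset$, pick $b$ in the intersection; since $b$ lies in the generated ideal, $b\le c_{\varphi_1}\vee\cdots\vee c_{\varphi_n}=:d$ for some $\varphi_1,\dots,\varphi_n\in\Sigma$, so $d\in F_0$ by upward closure. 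Applying finite satisfiability to $\{\varphi_1,\dots,\varphi_n\}$ yields an ideal $J'$ disjoint from $F_0$ with all $c_{\varphi_i}\in J'$, whence $d\in J'$ since ideals are closed under finite joins; thus $d\in F_0\cap J'$, contradicting $F_0\cap J'=\emptyset$. Condition 2 is dual, using that filters are closed under finite meets.

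The modal conditions 3 and 4 follow the same pattern but additionally invoke the monotonicity of the modal operators (recorded after the definition of the complex algebra: $\Box^{\mathfrak{P}^+}$ and $\Diamond^{\mathfrak{P}^+}$ are meet- resp.\ join-preserving, hence order-preserving). For condition 3 (point $F_0$, target $\{J'\mid F_0 R^c_\Box J'\}$), suppose the generated ideal $J$ is not in the target set, i.e.\ $\Box b\in F_0$ for some $b\in J$. Then $b\le d:=c_{\varphi_1}\vee\cdots\vee c_{\varphi_n}$ for suitable $\varphi_i\in\Sigma$, so $\Box b\le\Box d$ by monotonicity and $\Box d\in F_0$ by upward closure. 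Finite satisfiability on $\{\varphi_1,\dots,\varphi_n\}$ gives an ideal $J'$ with $F_0 R^c_\Box J'$ and all $c_{\varphi_i}\in J'$, hence $d\in J'$; but $F_0 R^c_\Box J'$ forces $\Box d\notin F_0$, a contradiction. Condition 4 is the dual statement for $\Diamond$, filters, and meets.

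I expect the main obstacle to be the bookkeeping in the modal cases: one must combine the existential shape of the definitions of $R_\Box$ and $R_\Diamond$ in the filter-ideal frame with the fact that a generic element of the generated ideal (resp.\ filter) is only bounded by a finite join (resp.\ meet) of the $c_\varphi$'s, and then align this finite join (resp.\ meet) with the single finite set to which finite satisfiability is applied. Monotonicity of $\Box$ and $\Diamond$ is exactly what bridges the gap between ``$\Box b\in F_0$ for some element $b$ of the ideal'' and the concrete element $\Box d$ on which the relational side-condition of the witness produced by finite satisfiability can be brought to bear.
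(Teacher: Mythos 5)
Your proof is correct and follows essentially the same route as the paper: your witness $\mathsf{Id}(\{c_\varphi\mid\varphi\in\Sigma\})$ (resp.\ $\mathsf{Fi}(\{c_\varphi\mid\varphi\in\Sigma\})$) is exactly the paper's directed union $\bigcup\{Id(\Delta)\mid\Delta\subseteq\Sigma\ \text{finite}\}$, and both arguments reduce satisfiability of all of $\Sigma$ to finite satisfiability via Lemma \ref{lem:Filter-ideal satisfaction} together with the fact that each element of the generated ideal (resp.\ filter) is controlled by a finite join (resp.\ meet) of the $c_\varphi$'s. The only cosmetic difference is that the paper verifies ideal-hood of the union directly and gets disjointness for free, whereas you argue disjointness by contradiction and invoke monotonicity of $\Box$ and $\Diamond$ in the modal cases (which one can even avoid by using $b\in Id(\Delta)\subseteq J'$ directly instead of the bound $b\le d$).
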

\begin{proof}
 See Appendix \ref{appendix:Proof filter-ideal}.
\end{proof}

This lemma gives us the following  theorem and corollary immediately, saying  filter-ideal extensions $\mathrm{LE}$-models have the Hennessy-Milner property.

\begin{theorem}
   For all $\mathrm{LE}$-models $\mathfrak{M}_1$ and $\mathfrak{M}_2$, $a_1, x_1$ in $\mathfrak{M}_1$, and $a_2, x_2$ in $\mathfrak{M}_2$,
   
1.~$\mathfrak{M}_1, a_1\rightsquigarrow_A \mathfrak{M}_2, a_2$ if and only if $\mathfrak{M}^\mathsf{Fi}_1, \mathsf{Fi}(\mathbf{a_1})\rightrightarrows\mathfrak{M}^\mathsf{Fi}_2, \mathsf{Fi}(\mathbf{a_2})$.

2.~$\mathfrak{M}_1, x_1\rightsquigarrow_X \mathfrak{M}_2, x_2$ if and only if $\mathfrak{M}^\mathsf{Id}_1, \mathsf{Id}(\mathbf{x_1})\leftleftarrows\mathfrak{M}^\mathsf{Id}_2, \mathsf{Id}(\mathbf{x_2})$.
\end{theorem}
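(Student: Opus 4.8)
The plan is to reduce both biconditionals to the Hennessy-Milner property of the filter-ideal extensions, using the truth lemma to transport modal relationships between a model and its extension. Two ingredients are already in hand: Lemma \ref{lem:extension_M-saturaed} tells us that $\mathfrak{M}^{\mathsf{FI}}_1$ and $\mathfrak{M}^{\mathsf{FI}}_2$ are $\mathrm{M}$-saturated, so by Theorem \ref{Thm:M-saturated is HM} they jointly lie in a Hennessy-Milner class; and the remark following Corollary \ref{cor:truth lemma} gives $a \leftrightsquigarrow_A \mathsf{Fi}(\mathbf{a})$ and $x \leftrightsquigarrow_X \mathsf{Id}(\mathbf{x})$ for every $a \in A$ and $x \in X$. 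I will also use freely that each of $\rightsquigarrow_A$ and $\rightsquigarrow_X$ is transitive, since they are defined by inclusion of the sets of satisfied formulas.

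For the left-to-right direction of item 1, I would assume $\mathfrak{M}_1, a_1 \rightsquigarrow_A \mathfrak{M}_2, a_2$. From $a_1 \leftrightsquigarrow_A \mathsf{Fi}(\mathbf{a_1})$ one extracts $\mathsf{Fi}(\mathbf{a_1}) \rightsquigarrow_A a_1$, and from $a_2 \leftrightsquigarrow_A \mathsf{Fi}(\mathbf{a_2})$ one extracts $a_2 \rightsquigarrow_A \mathsf{Fi}(\mathbf{a_2})$. Chaining $\mathsf{Fi}(\mathbf{a_1}) \rightsquigarrow_A a_1 \rightsquigarrow_A a_2 \rightsquigarrow_A \mathsf{Fi}(\mathbf{a_2})$ and using transitivity yields $\mathsf{Fi}(\mathbf{a_1}) \rightsquigarrow_A \mathsf{Fi}(\mathbf{a_2})$ inside the $\mathrm{M}$-saturated extensions. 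The Hennessy-Milner property then converts this modal inclusion into the existence of a simulation, i.e. $\mathfrak{M}^{\mathsf{FI}}_1, \mathsf{Fi}(\mathbf{a_1}) \rightrightarrows \mathfrak{M}^{\mathsf{FI}}_2, \mathsf{Fi}(\mathbf{a_2})$, as required. For the converse I would assume such a simulation exists; by simulation invariance (Theorem \ref{thm:simulation invariance}) it gives $\mathsf{Fi}(\mathbf{a_1}) \rightsquigarrow_A \mathsf{Fi}(\mathbf{a_2})$, and chaining with $a_1 \rightsquigarrow_A \mathsf{Fi}(\mathbf{a_1})$ and $\mathsf{Fi}(\mathbf{a_2}) \rightsquigarrow_A a_2$ (again from the truth lemma) returns $\mathfrak{M}_1, a_1 \rightsquigarrow_A \mathfrak{M}_2, a_2$.

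Item 2 follows the same template, with $\mathsf{Id}(\mathbf{x})$ replacing $\mathsf{Fi}(\mathbf{a})$ and the $X$-sort relations replacing the $A$-sort ones. The only point requiring care is that simulations reverse the direction of $\succ$-satisfaction on the $X$-sort: Theorem \ref{thm:simulation invariance} turns $\rightrightarrows$ into $\leftsquigarrow_X$ rather than $\rightsquigarrow_X$, and the Hennessy-Milner conclusion for $x$-points is phrased with $\leftleftarrows$. Concretely, from $x_1 \rightsquigarrow_X x_2$ together with $x_i \leftrightsquigarrow_X \mathsf{Id}(\mathbf{x_i})$ one derives $\mathsf{Id}(\mathbf{x_1}) \rightsquigarrow_X \mathsf{Id}(\mathbf{x_2})$ by transitivity, and the Hennessy-Milner property delivers $\mathsf{Id}(\mathbf{x_1}) \leftleftarrows \mathsf{Id}(\mathbf{x_2})$; for the converse, $\leftleftarrows$ unfolds to a simulation in the opposite direction whose invariance yields exactly $\mathsf{Id}(\mathbf{x_1}) \rightsquigarrow_X \mathsf{Id}(\mathbf{x_2})$, which transports back to $x_1 \rightsquigarrow_X x_2$. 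I expect no conceptual difficulty here; the main obstacle is purely the bookkeeping of keeping the orientation of the $X$-sort relations straight, since on that sort every arrow flips.
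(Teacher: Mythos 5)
Your proposal is correct and takes essentially the same route as the paper, whose proof is stated as ``immediate'' from exactly the three ingredients you use: the truth lemma (Corollary \ref{cor:truth lemma}), $\mathrm{M}$-saturation of filter-ideal extensions (Lemma \ref{lem:extension_M-saturaed}), and the Hennessy-Milner property of $\mathrm{M}$-saturated models (Theorem \ref{Thm:M-saturated is HM}). Your write-up merely makes explicit the transitivity chaining through $\mathsf{Fi}(\mathbf{a_i})$ and $\mathsf{Id}(\mathbf{x_i})$ and the appeal to simulation invariance (Theorem \ref{thm:simulation invariance}) in the right-to-left directions, details the paper leaves implicit.
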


\begin{proof}
Immediate by Corollary \ref{cor:truth lemma}, Lemma \ref{lem:extension_M-saturaed}, and Theorem \ref{Thm:M-saturated is HM}.
\end{proof}

\begin{corollary}
For all $\mathrm{LE}$-models $\mathfrak{M}_1$ and $\mathfrak{M}_2$, any $a_1, x_1$ in $\mathfrak{M}_1$ and any $a_2, x_2$ in $\mathfrak{M}_2$,  
  
1.~$\mathfrak{M}_1, a_1\leftrightsquigarrow_A \mathfrak{M}_2, a_2$ if and only if $\mathfrak{M}^\mathsf{Fi}_1, \mathsf{Fi}(\mathbf{a_1})\rightleftarrows\mathfrak{M}^\mathsf{Fi}_2, \mathsf{Fi}(\mathbf{a_2})$.

2.~$\mathfrak{M}_1, x_1\leftrightsquigarrow_X \mathfrak{M}_2, x_2$ if and only if $\mathfrak{M}^\mathsf{Id}_1, \mathsf{Id}(\mathbf{x_1})\rightleftarrows\mathfrak{M}^\mathsf{Id}_2, \mathsf{Id}(\mathbf{x_2})$.
\end{corollary}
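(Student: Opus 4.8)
The plan is to obtain this corollary almost immediately by unfolding the definitions of the two-sided relations $\leftrightsquigarrow$ and $\rightleftarrows$ into their one-directional components and then invoking the preceding Theorem (on filter-ideal extensions and the Hennessy-Milner property) once for each direction. Recall from Definition \ref{def: modally equivalent} that $\mathfrak{M}_1, a_1 \leftrightsquigarrow_A \mathfrak{M}_2, a_2$ abbreviates the conjunction of $\mathfrak{M}_1, a_1 \rightsquigarrow_A \mathfrak{M}_2, a_2$ and $\mathfrak{M}_1, a_1 \leftsquigarrow_A \mathfrak{M}_2, a_2$, where the latter is by definition $\mathfrak{M}_2, a_2 \rightsquigarrow_A \mathfrak{M}_1, a_1$. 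Dually, from Definition \ref{def:bisimilar}, $\mathsf{Fi}(\mathbf{a_1}) \rightleftarrows \mathsf{Fi}(\mathbf{a_2})$ abbreviates the conjunction of $\mathsf{Fi}(\mathbf{a_1}) \rightrightarrows \mathsf{Fi}(\mathbf{a_2})$ and $\mathsf{Fi}(\mathbf{a_1}) \leftleftarrows \mathsf{Fi}(\mathbf{a_2})$, the latter being by definition $\mathsf{Fi}(\mathbf{a_2}) \rightrightarrows \mathsf{Fi}(\mathbf{a_1})$. So both sides of the desired biconditional are conjunctions of two one-directional statements, and it suffices to match them up componentwise.

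For item 1, I would argue as follows. Applying item 1 of the preceding Theorem to the pointed models $\mathfrak{M}_1, a_1$ and $\mathfrak{M}_2, a_2$ gives that $\mathfrak{M}_1, a_1 \rightsquigarrow_A \mathfrak{M}_2, a_2$ holds if and only if $\mathsf{Fi}(\mathbf{a_1}) \rightrightarrows \mathsf{Fi}(\mathbf{a_2})$. Applying the same item of the Theorem again, but now with the roles of the two models interchanged, gives that $\mathfrak{M}_2, a_2 \rightsquigarrow_A \mathfrak{M}_1, a_1$ holds if and only if $\mathsf{Fi}(\mathbf{a_2}) \rightrightarrows \mathsf{Fi}(\mathbf{a_1})$, and the right-hand side is exactly $\mathsf{Fi}(\mathbf{a_1}) \leftleftarrows \mathsf{Fi}(\mathbf{a_2})$ by the definition of $\leftleftarrows$. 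Conjoining the two equivalences and then re-folding using Definition \ref{def: modally equivalent} on the left and Definition \ref{def:bisimilar} on the right yields $\mathfrak{M}_1, a_1 \leftrightsquigarrow_A \mathfrak{M}_2, a_2$ iff $\mathsf{Fi}(\mathbf{a_1}) \rightleftarrows \mathsf{Fi}(\mathbf{a_2})$, as required.

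Item 2 is handled in the same way, using item 2 of the preceding Theorem (which relates $\rightsquigarrow_X$ to $\leftleftarrows$ on the ideal extensions) applied first to $x_1, x_2$ and then to $x_2, x_1$, and then re-folding the resulting conjunction into $\mathsf{Id}(\mathbf{x_1}) \rightleftarrows \mathsf{Id}(\mathbf{x_2})$. I do not expect any genuine obstacle here, since everything reduces to the already-proven Theorem; the only point demanding care is the bookkeeping of orientations when the two models are swapped, namely recognizing that $\leftleftarrows$ and $\leftsquigarrow$ are precisely the converses of $\rightrightarrows$ and $\rightsquigarrow$ with the models interchanged, so that the symmetric application of the Theorem lands on exactly the second conjunct of the definition of bisimilarity. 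Keeping the two sorts ($A$-sort with $\mathsf{Fi}$ and $X$-sort with $\mathsf{Id}$) clearly separated throughout is the main thing to watch, but it introduces no real difficulty.
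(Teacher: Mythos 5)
Your proof is correct and is exactly the argument the paper intends: the corollary is stated without proof as an immediate consequence of the preceding theorem, obtained precisely by unfolding $\leftrightsquigarrow$ and $\rightleftarrows$ into their one-directional conjuncts and applying the theorem twice, once with the models swapped. Your handling of the orientation bookkeeping (in particular that item 2 of the theorem pairs $\rightsquigarrow_X$ with $\leftleftarrows$, so the two applications land on the two conjuncts of $\rightleftarrows$ in crossed order) is sound, since the conjunction is the same either way.
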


\section{Toward van Benthem Characterization Theorem}\label{sec:Toward van Benthem Characterization Theorem}

In this section, we firstly introduce the standard translation for non-distributive modal logic based on polarity-based semantics, and then define the ultrapower extension for $\mathrm{LE}$-models. Following  that, we introduce $\omega$-saturated $\mathrm{LE}$-models and show that $\omega$-saturated $\mathrm{LE}$-models are $\mathrm{M}$-saturated. Finally, we show that for any $\mathrm{LE}$-model,  there exists an ultrapower extension of it which is $\omega$-saturated.  We use this result to prove the van Benthem characterization theorem of non-distributive modal logic.


\subsection{Standard Translation}\label{subsec:Standard Translation}

The following definition of standard translation for non-distributive modal logic on polarity-based semantics is inspired from \cite[Section 2.1]{CONRADIE2019923}. 
\begin{definition}
Let $g\in G$ and $m\in M$ be two $\mathcal{L}_1$-variables and $\varphi\in\mathcal{L}$ (cf.\ Section \ref{Two Sorted First-Order Logic}). The {\em standard translations} $ST_g:\mathcal{L}\rightarrow\mathcal{L}^1$ and $ST_m: \mathcal{L}\rightarrow\mathcal{L}^1$ are defined as follows:

\smallskip
{{
\centering
\small{
\begin{tabular}{lcl}
  $ST_g(\bot) := \forall m( g I m)$   & \quad &  $ST_m(\bot) := m = m$ \\
   $ST_g(\top) := g=g$   & \quad &  $ST_m(\bot) :=  \forall g (g I m)$ \\
    $ST_g(p) := P_A(g)$   & \quad &  $ST_m(p) := P_X(m)$ \\
     $ST_g(\varphi \vee \psi ) :=\forall m (ST_m(\varphi) \wedge ST_m(\psi) \rightarrow g I m) $   & \quad &  $ST_m(\varphi \vee \psi ) :=ST_m(\varphi) \wedge ST_m(\psi)$ \\
     $ST_g(\varphi \wedge \psi ) :=ST_g(\varphi) \wedge ST_g(\psi)$   & \quad & $ST_m(\varphi \wedge \psi ) :=\forall g (ST_g(\varphi) \wedge ST_g(\psi) \rightarrow g I m) $\\
       $ST_g(\Diamond \varphi) := \forall m (ST_m(\Diamond \varphi)  \rightarrow g I m) $   & \quad &  $ST_m(\Diamond \varphi ) :=\forall g (ST_g(\varphi)  \rightarrow m R_\Diamond g)$ \\
       $ST_g(\Box \varphi ) :=\forall m (ST_m(\varphi)  \rightarrow g R_\Box  m)$  & \quad &  $ST_m(\Box \varphi) := \forall g (ST_g(\Box \varphi)  \rightarrow g I m) $\\
\end{tabular}
\par
}
}}
\end{definition}
The following lemma is immediate from the definition above. 
\begin{lemma}\label{lem:standard translation}
   Let  $\mathfrak{M}=(A, X, I, R_\Box, R_\Diamond, V)$ be an $\mathrm{LE}$-model, then for any $a\in A$ and $x \in X$, and any $\mathcal{L}_1$-variables $g\in G$ and $m\in M$:
  
1.~$\mathfrak{M}, a \Vdash \varphi$ iff $\mathfrak{M} \vDash ST_g(\varphi)[a]$ \quad and \quad $\mathfrak{M}, x \succ  \varphi$ iff $\mathfrak{M} \vDash ST_m(\varphi)[x]$.

2.~$\mathfrak{M}, a\Vdash \varphi$ iff $\mathfrak{M} \vDash \forall g ST_g(\varphi)$ \quad and \quad $\mathfrak{M}, x \succ \varphi$ iff $\mathfrak{M} \vDash \forall m ST_m(\varphi)$.
\end{lemma}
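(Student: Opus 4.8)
The plan is to prove item 1 by a single induction on the complexity of $\varphi$, establishing the two equivalences $\mathfrak{M}, a \Vdash \varphi \Leftrightarrow \mathfrak{M} \vDash ST_g(\varphi)[a]$ and $\mathfrak{M}, x \succ \varphi \Leftrightarrow \mathfrak{M} \vDash ST_m(\varphi)[x]$ \emph{simultaneously}. Item 2 then follows at once: $\mathfrak{M} \vDash \forall g\, ST_g(\varphi)$ holds iff $\mathfrak{M} \vDash ST_g(\varphi)[a]$ for every $a\in A$, which by item 1 is equivalent to $\mathfrak{M}, a \Vdash \varphi$ for every $a$, and dually for $\succ$, $m$ and $\forall m\, ST_m(\varphi)$.

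For the base cases I would simply unfold the definitions of $ST_g$, $ST_m$ and compare them with the matching clauses of the satisfaction relation: $ST_g(p)=P_A(g)$ and $ST_m(p)=P_X(m)$ correspond to $a\in\val{p}$ and $x\in\descr{p}$; the trivially true $ST_g(\top)=g{=}g$ and $ST_m(\bot)=m{=}m$ match the two "always" clauses; and $ST_g(\bot)=\forall m(gIm)$, $ST_m(\top)=\forall g(gIm)$ are precisely the conditions $(\forall x)\,aIx$ and $(\forall a)\,aIx$ defining $\mathfrak{M}, a \Vdash \bot$ and $\mathfrak{M}, x \succ \top$. The two propositional cases reduce directly to the induction hypothesis on the immediate subformulas: for $\wedge$ the $ST_g$-clause is a literal conjunction matching the $\Vdash$-clause, while the $ST_m$-clause quantifies over $A$ exactly as the $\succ$-clause for $\varphi\wedge\psi$; for $\vee$ the roles of $G$ and $M$ are interchanged. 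In each case only the IH for $\varphi$ and $\psi$ is needed.

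The one delicate point — the only place where the argument is more than mechanical — is the mutual recursion in the modal cases. Here $ST_g(\Diamond\varphi)$ is defined in terms of $ST_m(\Diamond\varphi)$, not of a proper subformula, and dually $ST_m(\Box\varphi)$ in terms of $ST_g(\Box\varphi)$; so these cases cannot be closed by the subformula IH alone. The remedy is to impose an internal ordering within each modal case. For $\Diamond$, I first establish $\mathfrak{M}, x \succ \Diamond\varphi \Leftrightarrow \mathfrak{M} \vDash ST_m(\Diamond\varphi)[x]$, which uses only the IH for $\varphi$ through $ST_g(\varphi)$ and mirrors the clause $\mathfrak{M}, x \succ \Diamond\varphi$ iff $(\forall a)(\mathfrak{M}, a \Vdash \varphi \Rightarrow xR_\Diamond a)$; then I derive the $ST_g(\Diamond\varphi)$ equivalence from the one just proved, matching $\mathfrak{M}, a \Vdash \Diamond\varphi$ iff $(\forall x)(\mathfrak{M}, x \succ \Diamond\varphi \Rightarrow aIx)$. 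For $\Box$ the order is reversed: I prove the $ST_g(\Box\varphi)$ equivalence first, via the IH on $ST_m(\varphi)$ and the clause $\mathfrak{M}, a \Vdash \Box\varphi$ iff $(\forall x)(\mathfrak{M}, x \succ \varphi \Rightarrow aR_\Box x)$, and then obtain the $ST_m(\Box\varphi)$ equivalence from it.

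Since both the standard translation and the satisfaction relation are laid out by exactly this same primary/derived pattern (each "hard" sort obtained from the "easy" one of the same formula via a universal quantifier guarded by $I$), the two definitions track each other clause by clause, and I expect no genuine obstacle beyond keeping this within-case ordering straight to guarantee the induction is well-founded.
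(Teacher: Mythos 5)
Your proposal is correct and follows essentially the same route as the paper, whose proof of this lemma is simply the one-line remark that it is straightforward by induction on the complexity of formulas. Your explicit treatment of the mutual recursion in the modal (and implicitly the $\wedge$/$\vee$) clauses --- proving the $ST_m(\Diamond\varphi)$ equivalence before $ST_g(\Diamond\varphi)$, and $ST_g(\Box\varphi)$ before $ST_m(\Box\varphi)$ --- is exactly the detail the paper leaves implicit, and it correctly ensures the induction is well-founded.
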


\begin{proof}
    It is straight-forward by induction on the complexity of formulas.
\end{proof}

\subsection{Ultrapower Extension and \texorpdfstring{$\omega$}{omega}-Saturated Model}\label{subsec:Ultrapower Extension and omega Saturated Model}

\begin{definition}
Let $\mathrm{K}$ be a non-empty index set, and $\mathfrak{M}=(\mathfrak{F}, V)$ be an $\mathrm{LE}$-model, where $\mathfrak{F}=(A, X, I, R_\Box, R_\Diamond)$ and for any $p\in\atprop$, $V(p)=(\val{p},\descr{p})$. The {\em $\mathrm{K}$-power} of $\mathfrak{M}$ is the $\mathrm{LE}$-model $\mathfrak{M}^{\mathrm{K}}=(A^{\mathrm{K}}, X^{\mathrm{K}}, I^{\mathrm{K}}, R^{\mathrm{K}}_\Box, R^{\mathrm{K}}_\Diamond, V^{\mathrm{K}})$, where

1.~$A^{\mathrm{K}}$ is the set of functions $s: \mathrm{K}\rightarrow A$.

2.~$X^{\mathrm{K}}$ is the set of functions from $t:\mathrm{K}\rightarrow X$.

3.~For any $s\in A^{\mathrm{K}}$ and $t\in X^{\mathrm{K}}$, $sI^{\mathrm{K}} t$ iff  for any $k\in\mathrm{K}$, $s(k)It(k)$.

 4.~For any $s\in A^{\mathrm{K}}$ and $t\in X^{\mathrm{K}}$, $sR^{\mathrm{K}}_\Box t$ iff for any $k\in\mathrm{K}$, $s(k)R_\Box t(k)$.
 
5.~For any $s\in A^{\mathrm{K}}$ and $t\in X^{\mathrm{K}}$, $tR^{\mathrm{K}}_\Diamond s$ iff for any $k\in\mathrm{K}$, $t(k)R_\Diamond s(k)$.

6.~For any $p\in\atprop, V^{\mathrm{K}}(p)=(\val{p}^{\mathrm{K}},\descr{p}^{\mathrm{K}})$ such that $s\in\val{p}^{\mathrm{K}}$ iff for any $k\in\mathrm{K}$, $s(k)\in\val{p}$, and $t\in\descr{p}^{\mathrm{K}}$ iff for any $k\in\mathrm{K}$, $t(k)\in\descr{p}$.

Let $\mathcal{U}\subseteq\mathcal{P}(\mathrm{K})$ be an ultrafilter over $\mathrm{K}$. We define equivalence relations $\sim_A$ and $\sim_X$ on $A^{\mathrm{K}}$ and $X^{\mathrm{K}}$,  respectively,  as follows:

1.~$s_1\sim_A s_2$ if and only if $\{k\in\mathrm{K}\mid s_1(k)=s_2(k)\}\in\mathcal{U}$.

2.~$t_1\sim_X t_2$ if and only if $\{k\in\mathrm{K}\mid t_1(k)=t_2(k)\}\in\mathcal{U}$.

The {\em ultrapower} of $\mathfrak{M}$ modulo $\mathrm{K}$ and $\mathcal{U}$ is a structure $\mathfrak{M}^{\mathrm{K}}_\mathcal{U}=(A^{\mathrm{K}}_\mathcal{U}, X^{\mathrm{K}}_\mathcal{U}, I^{\mathrm{K}}_\mathcal{U}, {R^{\mathrm{K}}_\Box}_\mathcal{U}, {R^{\mathrm{K}}_\Diamond}_\mathcal{U}, V^{\mathrm{K}}_\mathcal{U})$, where $A^{\mathrm{K}}_\mathcal{U}$ (resp.~$X^{\mathrm{K}}_\mathcal{U}$) is quotient of $A^{\mathrm{K}}$ (resp.~$X^{\mathrm{K}}$) over $\sim_A$ (resp.~$\sim_X$), and for any $[s]\in A^{\mathrm{K}}_\mathcal{U}$, $[t]\in X^{\mathrm{K}}_\mathcal{U}$,

1.~$[s]I^{\mathrm{K}}_\mathcal{U}[t]$ if and only if $\{k\in\mathrm{K}\mid s(k)It(k)\}\in\mathcal{U}$.

2.~$[s]{R^{\mathrm{K}}_\Box}_\mathcal{U}[t]$ if and only if $\{k\in\mathrm{K}\mid s(k)R_\Box t(k)\}\in\mathcal{U}$.

3.~$[t]{R^{\mathrm{K}}_\Diamond}_\mathcal{U}[s]$ if and only if $\{k\in\mathrm{K}\mid t(k){R_\Diamond}s(k)\}\in\mathcal{U}$.

4.~For any $p\in\atprop$, $V^{\mathrm{K}}_\mathcal{U}(p)=(\val{p}^{\mathrm{K}}_\mathcal{U}, \descr{p}^{\mathrm{K}}_\mathcal{U})$, where $[s]\in\val{p}^{\mathrm{K}}_\mathcal{U}$ if and only if $\{k\in\mathrm{K}\mid s(k)\in\val{p}\}\in\mathcal{U}$, and $[t]\in\descr{p}^{\mathrm{K}}_\mathcal{U}$ if and only if $\{k\in\mathrm{K}\mid t(k)\in\descr{p}\in\mathcal{U}\}$.
\end{definition}

The following theorem is an immediate consequence of \L\'{o}s's Theorem for first-order logic. 
\begin{theorem}[\L\'{o}s's Theorem]
Let $\mathrm{K}$ be a non-empty index set and $\mathcal{U}$ be an ultrafilter over $\mathrm{K}$. Let $\mathfrak{M}$ be an $\mathrm{LE}$-model and $\mathfrak{M}^\mathrm{K}_\mathcal{U}$ be its ultrapower modulo $\mathrm{K}$ and $\mathcal{U}$. For any two-sorted first-order formula $\varphi\in \mathcal{L}^1$ (cf.\ Section \ref{Two Sorted First-Order Logic}),

{{
\centering
 $\mathfrak{M}^\mathrm{K}_\mathcal{U}\vDash\varphi[s,t]$ \quad if and only if \quad  $\{k\in\mathrm{K}\mid \mathfrak{M}\vDash\varphi[s(k),t(k)]\}\in\mathcal{U}$.
 \par
}}

\end{theorem}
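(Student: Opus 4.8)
The plan is to observe that the construction $\mathfrak{M}^{\mathrm{K}}_{\mathcal{U}}$ is nothing but the usual (two-sorted) first-order ultrapower of $\mathfrak{M}$, viewed as an $\mathcal{L}^1$-structure in the sense of Section \ref{Two Sorted First-Order Logic}, and then to invoke the classical \L\'{o}s's Theorem. First I would check that each defining clause of $\mathfrak{M}^{\mathrm{K}}_{\mathcal{U}}$ matches the standard ultrapower interpretation sort-by-sort and symbol-by-symbol: the two carriers $A^{\mathrm{K}}_{\mathcal{U}}$ and $X^{\mathrm{K}}_{\mathcal{U}}$ are the $\mathcal{U}$-quotients of the products $A^{\mathrm{K}}$ and $X^{\mathrm{K}}$ by $\sim_A$ and $\sim_X$, while the binary predicates $I$, $R_\Box$, $R_\Diamond$ and the unary predicates $P_A$, $P_X$ are each interpreted by the rule ``holds on a $\mathcal{U}$-large set of coordinates.'' Since the statement concerns only first-order satisfaction, I would treat $\mathfrak{M}^{\mathrm{K}}_{\mathcal{U}}$ purely as an $\mathcal{L}^1$-structure here (whether it is moreover a genuine $\mathrm{LE}$-model is a separate matter). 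Before reading the clauses as interpretations on equivalence classes, I would record the routine well-definedness check: since $\mathcal{U}$ is a filter (upward closed and closed under finite intersection), if $s_1 \sim_A s_1'$ and $t_1 \sim_X t_1'$ then the coordinate sets witnessing $s_1 I^{\mathrm{K}} t_1$ and $s_1' I^{\mathrm{K}} t_1'$ agree on a set in $\mathcal{U}$, so membership in $\mathcal{U}$ is independent of representatives; likewise for $R_\Box$, $R_\Diamond$, $P_A$, $P_X$.

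Having identified $\mathfrak{M}^{\mathrm{K}}_{\mathcal{U}}$ with the first-order ultrapower, the result follows from the multi-sorted version of \L\'{o}s's Theorem. If one wants the argument in full, I would prove the biconditional by induction on the complexity of $\varphi \in \mathcal{L}^1$. The atomic cases (equality in each sort, the unary predicates $P_A(g)$ and $P_X(m)$, and the binary predicates $g I m$, $g R_\Box m$, $m R_\Diamond g$) are exactly the defining clauses above, combined with the well-definedness just noted. For the Boolean connectives, the conjunction and disjunction cases use that $\mathcal{U}$ is closed under intersection and is prime, while the crucial negation case uses that $\mathcal{U}$ is an \emph{ultrafilter}: a coordinate set lies in $\mathcal{U}$ if and only if its complement does not.

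The step I expect to be the main obstacle---the only genuinely non-bookkeeping part---is the existential (equivalently universal) quantifier case, where the two-sorted setting requires a little care. For an existential quantifier over the sort $A$, if $\mathfrak{M} \vDash \exists g\,\psi$ holds on a $\mathcal{U}$-large set $S$ of coordinates, I would use the axiom of choice to select a witness $a_k \in A$ for each $k \in S$, assemble a function $s \in A^{\mathrm{K}}$ with $s(k) = a_k$ on $S$ and an arbitrary value off $S$, and then apply the induction hypothesis to $[s]$; the converse direction is immediate from the induction hypothesis. The only delicacy is to quantify the witness in the correct sort ($A$ for a variable in $G$, $X$ for a variable in $M$), which the two-sorted framework of Section \ref{Two Sorted First-Order Logic} makes transparent. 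Everything else is the standard \L\'{o}s induction, so the two-sorted generalization is routine.
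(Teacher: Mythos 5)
Your proposal is correct and matches the paper's approach: the paper offers no detailed proof, simply noting that the statement is an immediate consequence of the classical (multi-sorted) \L\'{o}s's Theorem for first-order logic, which is exactly your reduction of $\mathfrak{M}^{\mathrm{K}}_{\mathcal{U}}$ to the standard first-order ultrapower of $\mathfrak{M}$ viewed as an $\mathcal{L}^1$-structure. Your additional sketch of the well-definedness check and the quantifier induction is sound but goes beyond what the paper records.
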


\begin{remark}
Any ultrapower $\mathfrak{M}^{\mathrm{K}}_\mathcal{U}$ of an $\mathrm{LE}$-model $\mathfrak{M}$ is indeed an $\mathrm{LE}$-model. In order to see this, we just need to check that relations ${R^{\mathrm{K}}_\Box}_\mathcal{U}$ and ${R^{\mathrm{K}}_\Diamond}_\mathcal{U}$ are $I$-compatible, and for any $p\in\atprop$, $\val{p}^{\mathrm{K}}_\mathcal{U}$ and $\descr{p}^{\mathrm{K}}_\mathcal{U}$ are Galois stable. This is true because all these conditions are first-order definable, and so they will be preserved in the ultrapower extension by \L\'{o}s's Theorem.
\end{remark}
We now define $\kappa$-saturated first-order models for any infinite cardinal $\kappa$.
\begin{definition}
    Let $\kappa$ be an infinite cardinal, and  $\mathfrak{M}$ be an $\mathrm{LE}$-model.  For any set  $S$ of elements  appearing in $\mathfrak{M}$, let $\mathcal{L}^1_{S}$ be the extension of $\mathcal{L}^1$ with constant symbols for every element of $S$.   $\mathfrak{M}$ is {\em $\kappa$-saturated} if for any set of elements $S$ appearing in $\mathfrak{M}$  with $|S| < \kappa$,  and  
    any set  $\Sigma$ of $\mathcal{L}^1_{S}$ formulas  with finite many free variables, if  $\Sigma$ is finitely satisfiable in $\mathfrak{M}$, then $\Sigma$ is satisfiable in $\mathfrak{M}$. 
\end{definition}

The following theorem follows from   \cite[Theorem 6.1.8]{chang1990model}. 

\begin{theorem}\label{thm:ultrapower extension is omega-saturated}
For any $\mathrm{LE}$-model $\mathfrak{M}$, there exist an non-empty index set $\mathrm{K}$ and an ultrafilter $\mathcal{U}$ over $\mathrm{K}$ such that $\mathfrak{M}^\mathrm{K}_\mathcal{U}$ is $\omega$-saturated.
\end{theorem}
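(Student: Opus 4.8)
The plan is to recognize that, once $\mathfrak{M}$ is read as the two-sorted $\mathcal{L}^1$-structure with sorts $A$ and $X$, binary predicates $I, R_\Box, R_\Diamond$, and the unary predicates $P_A, P_X$ (one for each $p\in\atprop$), the $\mathrm{K}$-power and ultrapower constructions given in the definition above coincide exactly with the ordinary two-sorted first-order ultrapower of $\mathfrak{M}$. Indeed, reading off the clauses, the quotients $A^\mathrm{K}_\mathcal{U}$ and $X^\mathrm{K}_\mathcal{U}$ are the reduced powers of $A$ and $X$ modulo $\mathcal{U}$ sort by sort, and $I^\mathrm{K}_\mathcal{U}, {R^\mathrm{K}_\Box}_\mathcal{U}, {R^\mathrm{K}_\Diamond}_\mathcal{U}$ and the $\val{p}^\mathrm{K}_\mathcal{U}, \descr{p}^\mathrm{K}_\mathcal{U}$ are interpreted by the standard ultrapower recipe; nothing beyond the Remark following \L\'{o}s's Theorem is needed to see that $\mathfrak{M}^\mathrm{K}_\mathcal{U}$ is precisely this first-order ultrapower. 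Consequently the $\omega$-saturation of $\mathfrak{M}^\mathrm{K}_\mathcal{U}$ in the sense of the definition above is literally $\omega$-saturation of a first-order structure, and the statement reduces to a classical fact about ultrapowers.

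Concretely, I would take $\mathrm{K} = \omega$ and let $\mathcal{U}$ be any nonprincipal ultrafilter over $\omega$. Such a $\mathcal{U}$ is countably incomplete (e.g.\ the cofinite sets $\{m : m \geq n\}$ lie in $\mathcal{U}$ while their intersection is empty), and the language $\mathcal{L}^1$ is countable since $\atprop$ is countable. By the cited result \cite[Theorem 6.1.8]{chang1990model}, a countably incomplete ultrapower of a structure in a countable language is $\omega_1$-saturated. Since any finite parameter set $S$ (those with $|S| < \omega$) in particular has $|S| < \omega_1$, $\omega_1$-saturation yields the $\omega$-saturation demanded by our definition: every set $\Sigma$ of $\mathcal{L}^1_S$-formulas in finitely many free variables that is finitely satisfiable in $\mathfrak{M}^\mathrm{K}_\mathcal{U}$ is realized there. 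This produces the required index set and ultrafilter. (Alternatively, one may simply invoke the cited theorem to obtain \emph{some} $\mathrm{K}$ and $\mathcal{U}$ with $\mathfrak{M}^\mathrm{K}_\mathcal{U}$ saturated to the desired degree, without committing to a specific $\mathcal{U}$.)

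Two points need care. First, the classical theorem is usually stated for single-sorted structures, whereas $\mathcal{L}^1$ carries the two sorts $A$ and $X$; the transfer to finitely many sorts is routine --- one may either rerun the standard type-realization argument sort by sort, or encode the two sorts into a single sort using the distinguishing unary predicates and apply the single-sorted theorem verbatim. Second, one must check that the paper's definition of $\omega$-saturated $\mathrm{LE}$-model, phrased with parameters drawn from the domains and types formed in the expanded language $\mathcal{L}^1_S$, coincides with the model-theoretic notion to which \cite{chang1990model} applies; this is essentially definitional. The main --- and only genuine --- obstacle is therefore the bookkeeping of this sort-by-sort correspondence together with the many-sorted reformulation of the saturation theorem; since neither presents real difficulty, the result follows as an immediate consequence of \cite{chang1990model}.
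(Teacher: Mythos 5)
Your proposal is correct and takes essentially the same route as the paper, whose entire proof is the citation of \cite[Theorem 6.1.8]{chang1990model}: you instantiate that citation with $\mathrm{K}=\omega$ and a nonprincipal (hence countably incomplete) ultrafilter, use the countability of $\mathcal{L}^1$, and note that $\omega_1$-saturation implies the $\omega$-saturation demanded by the paper's definition. The details you supply --- the identification of the paper's sort-by-sort ultrapower with the ordinary two-sorted first-order ultrapower, and the routine transfer of the saturation theorem to the two-sorted setting --- are exactly what the paper leaves implicit.
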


\begin{proposition}\label{pro: Any omega-saturated LE-model is M-saturated}
    Any $\omega$-saturated $\mathrm{LE}$-model is $\mathrm{M}$-saturated. 
\end{proposition}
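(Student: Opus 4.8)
The plan is to reduce each of the four saturation conditions in Definition~\ref{def:M-saturation} to a single application of $\omega$-saturation, using the standard translation of Lemma~\ref{lem:standard translation} to pass from $\mathcal{L}$-formulas to $\mathcal{L}^1$-formulas. In each case the relevant successor neighbourhood is carved out by exactly one first-order atom built from the point that indexes it, and this atom, together with the translations of the formulas of $\Sigma$, forms a first-order type in a \emph{single} free variable over a \emph{one-element} parameter set. I would then invoke $\omega$-saturation to realize the type, and read off the witness via the standard translation.

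I would treat condition~1 in detail; fix $a\in A$ and a set $\Sigma$ of $\mathcal{L}$-formulas that is finitely satisfiable in $\{x'\mid aI^c x'\}$. Working in the two-sorted structure $\mathfrak{M}$ with $a$ named by a constant, consider the set of $\mathcal{L}^1_{\{a\}}$-formulas in the single free variable $m$ of sort $X$
\[
\Gamma(m) := \{ST_m(\varphi)\mid \varphi\in\Sigma\}\cup\{\neg(a\,I\,m)\}.
\]
By Lemma~\ref{lem:standard translation}, for any $x'\in X$ we have $\mathfrak{M}\vDash \Gamma(m)[x']$ iff $aI^c x'$ and $\mathfrak{M}, x'\succ\varphi$ for every $\varphi\in\Sigma$; hence realizing $\Gamma$ is exactly the same as satisfying $\Sigma$ at a point of $\{x'\mid aI^c x'\}$. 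To see that $\Gamma$ is finitely satisfiable in $\mathfrak{M}$, take a finite $\Gamma_0\subseteq\Gamma$ and let $\Sigma_0\subseteq\Sigma$ collect the finitely many modal formulas whose translations occur in $\Gamma_0$. By hypothesis $\Sigma_0$ is satisfiable in $\{x'\mid aI^c x'\}$, so there is $x'$ with $aI^c x'$ and $\mathfrak{M}, x'\succ\psi$ for all $\psi\in\Sigma_0$; this same $x'$ satisfies $\Gamma_0$ (the atom $\neg(a\,I\,m)$, if present, holds because $aI^c x'$). Since $\{a\}$ has size $1<\omega$ and $\Gamma$ has a single free variable, $\omega$-saturation yields an $x'$ realizing all of $\Gamma$, which is the desired point of $\{x'\mid aI^c x'\}$ satisfying $\Sigma$.

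The remaining three conditions are established verbatim, changing only the sort of the free variable and the single relational atom: for condition~2 use $g$ of sort $A$ with atom $\neg(g\,I\,x)$ and parameter $x$; for condition~3 use $m$ of sort $X$ with atom $\neg(a\,R_\Box\,m)$ and parameter $a$; and for condition~4 use $g$ of sort $A$ with atom $\neg(x\,R_\Diamond\,g)$ and parameter $x$. In every case the translations $ST_g(\varphi)$ or $ST_m(\varphi)$ align, via Lemma~\ref{lem:standard translation}, with $\Vdash$ or $\succ$ as required by Definition~\ref{def:M-saturation}, and the parameter set is again a singleton, so $|S|<\omega$ holds trivially.

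The only point requiring care—the main obstacle, though it is mild—is the transfer of finite satisfiability: one must check that each finite fragment of the first-order type is witnessed by a point that actually lies in the prescribed relational neighbourhood, and this is precisely what finite satisfiability of $\Sigma$ in that neighbourhood delivers once the membership condition is encoded by the extra atom. I would also emphasise that $\Sigma$ may well be infinite; what makes $\omega$-saturation applicable is not the size of $\Sigma$ but the facts that the type has only one free variable and that it is taken over a finite (indeed singleton) set of parameters.
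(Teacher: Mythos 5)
Your proposal is correct and follows essentially the same route as the paper's proof: both form the first-order type consisting of the standard translations of $\Sigma$ together with the single negated relational atom (e.g.\ $\neg(a\,I\,m)$), note that it is a type in one free variable over the singleton parameter set, transfer finite satisfiability from the modal hypothesis, realize the type by $\omega$-saturation, and read off the witness via Lemma \ref{lem:standard translation}. If anything, your write-up is slightly more careful than the paper's about why the parameter set is admissible and why finite satisfiability transfers.
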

\begin{proof}
    Let $\mathfrak{M}= (\mathfrak{F},V)$ be an $\omega$-saturated $\mathrm{LE}$-model, where $\mathfrak{F}=(A,X,I,R_\Box, R_\Diamond)$. We only prove the fist item of Definition \ref{def:M-saturation}, and other conditions can be proved similarly. Let $\Sigma$ be a set of $\mathcal{L}$-formulas which are finitely satisfiable in set $\{x' \mid a I^c x'\}$. Define $\Sigma'$ to be the set 

{{
\centering
$ \Sigma'= \{a I^c x \} \cup ST_x(\Sigma)$.
\par
}}

    Clearly $\Sigma'$ contains only finite many free variables (only $x$)  and  every finite subset of  $\Sigma'$ is  satisfiable in $(\mathfrak{M},x)_{x \in X} $. Hence, by $\omega$-saturation $\Sigma'$ is  satisfiable in $(\mathfrak{M},x)_{x \in X} $ at some $x_0$ with $a I^c x_0$ . Then, by  Lemma \ref{lem:standard translation} we have $\mathfrak{M},x_0 \succ \Sigma $. Therefore, $\Sigma$ is satisfiable in  $\{x' \mid a I^c x'\}$.
\end{proof}
It follows immediately from the above proposition and Theorem \ref{Thm:M-saturated is HM} that the class of $\omega$-saturated LE-models satisfy Hennessy-Milner property. 

\subsection{The van Benthem Characterization Theorem}\label{subsec:The van Benthem Characterization Theorem}

We are now ready to prove the van Benthem characterization theorem for non-distributive modal logic based on the polarity-based semantics, which follows from the detour lemma below.

\begin{theorem}[Detour Lemma]\label{thm:Detour Lemma}
For all $\mathrm{LE}$-models $\mathfrak{M}_1$ and $\mathfrak{M}_2$, any $a_1, x_1$ in $\mathfrak{M}_1$ and $a_2, x_2$ in $\mathfrak{M}_2$,

1.~$\mathfrak{M}_1, a_1\rightsquigarrow_A \mathfrak{M}_2, a_2$ if and only if there exist two $\omega$-saturated $\mathrm{LE}$-models $\mathfrak{M}^\omega_1$ and $\mathfrak{M}^\omega_2$, and two elementary embeddings $\omega_1:\mathfrak{M}_1\rightarrow\mathfrak{M}^\omega_1$ and $\omega_2:\mathfrak{M}_2\rightarrow\mathfrak{M}^\omega_2$, such that $\mathfrak{M}^\omega_1, \omega_1(a_1)\rightrightarrows\mathfrak{M}^\omega_2, \omega_2(a_2)$.
     
 2.~$\mathfrak{M}_1, x_1\rightsquigarrow_X \mathfrak{M}_2, x_2$ if and only if there exist two $\omega$-saturated $\mathrm{LE}$-models $\mathfrak{M}^\omega_1$ and $\mathfrak{M}^\omega_2$, and two elementary embeddings $\omega_1:\mathfrak{M}_1\rightarrow\mathfrak{M}^\omega_1$ and $\omega_2:\mathfrak{M}_2\rightarrow\mathfrak{M}^\omega_2$, such that $\mathfrak{M}^\omega_1, \omega_1(x_1)\leftleftarrows\mathfrak{M}^\omega_2, \omega_2(x_2)$.
\end{theorem}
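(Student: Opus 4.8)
The plan is to assemble the machinery developed in the previous subsections, using $\omega$-saturated ultrapower extensions as the promised ``detour''. The linchpin is the observation that elementary embeddings preserve and reflect modal satisfaction. Indeed, by Lemma \ref{lem:standard translation} we have $\mathfrak{M}, a \Vdash \varphi$ iff $\mathfrak{M} \vDash ST_g(\varphi)[a]$ and $\mathfrak{M}, x \succ \varphi$ iff $\mathfrak{M} \vDash ST_m(\varphi)[x]$, where $ST_g(\varphi)$ and $ST_m(\varphi)$ are first-order $\mathcal{L}^1$-formulas with a single free variable. Since an elementary embedding $\omega: \mathfrak{M} \to \mathfrak{M}^\omega$ preserves the truth of every first-order formula together with its parameters, it follows that $\mathfrak{M}, a \Vdash \varphi$ iff $\mathfrak{M}^\omega, \omega(a) \Vdash \varphi$, and $\mathfrak{M}, x \succ \varphi$ iff $\mathfrak{M}^\omega, \omega(x) \succ \varphi$, for all $\varphi \in \mathcal{L}$. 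I would isolate this as an auxiliary fact first, since it is used in both directions and for both items.

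For the right-to-left implication of item 1, suppose we are given $\omega$-saturated extensions with elementary embeddings $\omega_1, \omega_2$ and a simulation witnessing $\mathfrak{M}^\omega_1, \omega_1(a_1) \rightrightarrows \mathfrak{M}^\omega_2, \omega_2(a_2)$. By Theorem \ref{thm:simulation invariance}(1) this yields $\mathfrak{M}^\omega_1, \omega_1(a_1) \rightsquigarrow_A \mathfrak{M}^\omega_2, \omega_2(a_2)$. Composing with the auxiliary fact on both sides --- so that $\mathfrak{M}_1, a_1 \Vdash \varphi$ transfers up to $\omega_1(a_1)$, across to $\omega_2(a_2)$ via $\rightsquigarrow_A$, and back down to $a_2$ --- gives $\mathfrak{M}_1, a_1 \rightsquigarrow_A \mathfrak{M}_2, a_2$. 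Item 2 is handled dually: from $\mathfrak{M}^\omega_1, \omega_1(x_1) \leftleftarrows \mathfrak{M}^\omega_2, \omega_2(x_2)$, i.e. $\mathfrak{M}^\omega_2, \omega_2(x_2) \rightrightarrows \mathfrak{M}^\omega_1, \omega_1(x_1)$, Theorem \ref{thm:simulation invariance}(2) yields $\mathfrak{M}^\omega_1, \omega_1(x_1) \rightsquigarrow_X \mathfrak{M}^\omega_2, \omega_2(x_2)$, and the auxiliary fact for $\succ$ transfers this down to $\mathfrak{M}_1, x_1 \rightsquigarrow_X \mathfrak{M}_2, x_2$. Note that the appearance of $\leftleftarrows$ rather than $\rightrightarrows$ on the $X$-side is forced by the fact that simulations reflect $\succ$-formulas while preserving $\Vdash$-formulas.

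For the left-to-right implication, I would first invoke Theorem \ref{thm:ultrapower extension is omega-saturated} to obtain, for each $i \in \{1,2\}$, an index set $\mathrm{K}_i$ and an ultrafilter $\mathcal{U}_i$ such that the ultrapower $\mathfrak{M}^\omega_i := (\mathfrak{M}_i)^{\mathrm{K}_i}_{\mathcal{U}_i}$ is $\omega$-saturated, taking $\omega_i$ to be the canonical diagonal embedding, which is elementary by \L\'{o}s's Theorem. Assuming $\mathfrak{M}_1, a_1 \rightsquigarrow_A \mathfrak{M}_2, a_2$, the auxiliary fact propagates this to $\mathfrak{M}^\omega_1, \omega_1(a_1) \rightsquigarrow_A \mathfrak{M}^\omega_2, \omega_2(a_2)$. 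By Proposition \ref{pro: Any omega-saturated LE-model is M-saturated} both $\mathfrak{M}^\omega_1$ and $\mathfrak{M}^\omega_2$ are $\mathrm{M}$-saturated, so by Theorem \ref{Thm:M-saturated is HM} they enjoy the Hennessy-Milner property (Definition \ref{def: HMP}); applying it delivers $\mathfrak{M}^\omega_1, \omega_1(a_1) \rightrightarrows \mathfrak{M}^\omega_2, \omega_2(a_2)$, as required. Item 2 is identical modulo replacing $\rightrightarrows$ by $\leftleftarrows$ and $\rightsquigarrow_A$ by $\rightsquigarrow_X$.

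The steps are individually routine given the cited results; the only point requiring genuine care is the auxiliary fact that elementary embeddings preserve modal satisfaction, since this is where the standard translation must be matched precisely against the first-order semantics --- in particular, checking that the single free variable of $ST_g(\varphi)$ (resp.\ $ST_m(\varphi)$) is exactly the element being embedded, so that elementarity applies with the correct parameter. I expect this bookkeeping, rather than any conceptual difficulty, to be the main thing to get right.
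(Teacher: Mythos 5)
Your proposal is correct and follows essentially the same route as the paper's proof: the standard translation (Lemma \ref{lem:standard translation}) plus elementarity to transfer modal equivalence between a model and its $\omega$-saturated ultrapower, Theorem \ref{thm:simulation invariance} for the right-to-left direction, and Theorem \ref{thm:ultrapower extension is omega-saturated}, Proposition \ref{pro: Any omega-saturated LE-model is M-saturated}, and Theorem \ref{Thm:M-saturated is HM} for the left-to-right direction. The only cosmetic differences are that you argue right-to-left directly rather than by contradiction, and you isolate the elementarity-transfer step as an explicit auxiliary fact, which the paper leaves implicit.
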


\begin{figure}
    \centering
        \includegraphics[scale=0.1]{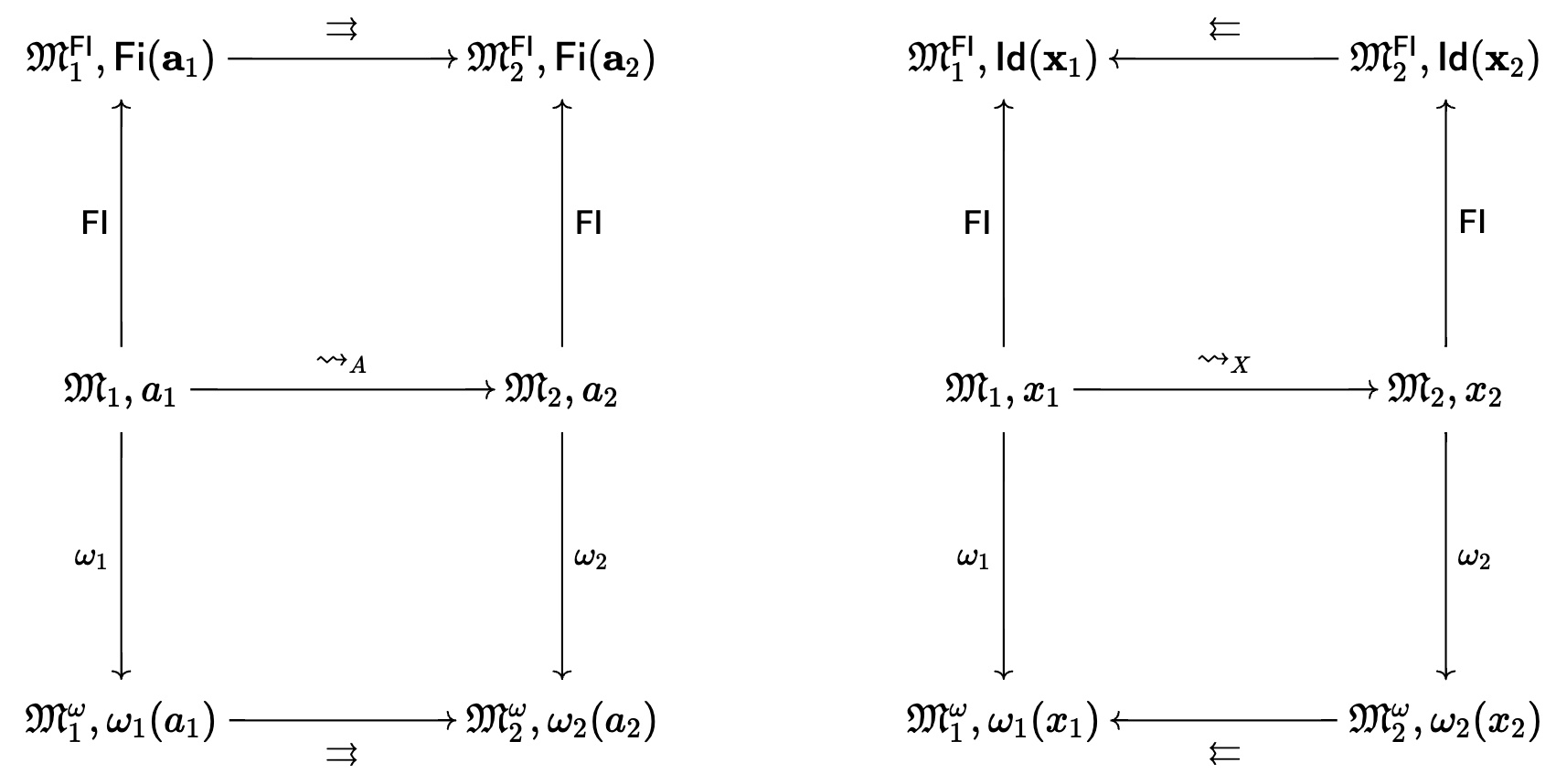}
        \vspace{-0.2cm}
    \caption{Detour Lemma}
    \vspace{-0.3cm}
    \label{fig:counter-example 3}
\end{figure}
\begin{proof}
    We only prove the item 1  here,  the proof of the item 2  is similar. The implication from  right to left proved by contradiction. Suppose   $\mathfrak{M}^\omega_1, \omega_1(a_1) \rightrightarrows\mathfrak{M}^\omega_2, \omega_2(a_2)$,  and $\omega_1:\mathfrak{M}_1\rightarrow\mathfrak{M}^\omega_1$,  $\omega_2:\mathfrak{M}_2\rightarrow\mathfrak{M}^\omega_2$ are two elementary embeddings. Suppose there exists $\varphi\in\mathcal{L}$, such that $\mathfrak{M}, a_1\Vdash\varphi$, and $\mathfrak{M}, a_2\nVdash\varphi$. Therefore, by Lemma \ref{lem:standard translation}, $\mathfrak{M}_1\vDash ST_g(\varphi)[a_1]$ and $\mathfrak{M}_2\nVdash ST_g(\varphi)[a_2]$, which implies that $\mathfrak{M}^\omega_1\vDash ST_g(\varphi)[\omega_1(a_1)]$ and $\mathfrak{M}^\omega_2\nvDash ST_g(\varphi)[\omega_2(a_2)]$. Therefore, by Lemma \ref{lem:standard translation} again, $\mathfrak{M}^\omega_1, \omega_1(a_1)\Vdash\varphi$ and $\mathfrak{M}^\omega_2, \omega_2(a_2)\nVdash\varphi$. However, this contradicts   $\mathfrak{M}^\omega_1, \omega_1(a_1) \rightrightarrows\mathfrak{M}^\omega_2, \omega_2(a_2)$ due to Theorem  \ref{thm:finite image implies bisimilar}. Hence proved. 
    
    For the implication from left to right, assume that $\mathfrak{M}_1, a_1\rightsquigarrow_A \mathfrak{M}_2, a_2$. We consider their respective ultrapower extensions ${\mathfrak{M}^\mathrm{K}_1}_\mathcal{U}$, ${\mathfrak{M}^\mathrm{K}_2}_\mathcal{U}$, and natural elementary embeddings $\omega_i:\mathfrak{M}_i\rightarrow{\mathfrak{M}^\mathrm{K}_i}_\mathcal{U}$ where $i\in\{1,2\}$
    such that for any $a_i\in A_i$, $\omega_i(a_i)=[s^i_{a_i}]$, where $s^i_{a_i}:\mathrm{K}\rightarrow A_i$ are the constant functions which send all elements in $\mathrm{K}$ to $a_i$. Therefore, ${\mathfrak{M}^\mathrm{K}_1}_\mathcal{U}, \omega_1(a_1)\rightsquigarrow_A {\mathfrak{M}^\mathrm{K}_2}_\mathcal{U}, \omega_2(a_2)$. By Theorem \ref{thm:ultrapower extension is omega-saturated}, ${\mathfrak{M}^\mathrm{K}_1}_\mathcal{U}$ and ${\mathfrak{M}^\mathrm{K}_2}_\mathcal{U}$ are  $\omega$-saturated $\mathrm{LE}$-models. Therefore, by Proposition \ref{pro: Any omega-saturated LE-model is M-saturated} and Theorem \ref{Thm:M-saturated is HM}, ${\mathfrak{M}^\mathrm{K}_1}_\mathcal{U}, \omega_1(a_1)\rightrightarrows{\mathfrak{M}^\mathrm{K}_2}_\mathcal{U}, \omega_2(a_2)$.
\end{proof}

\begin{theorem}[van Benthem Characterization Theorem]
Let $\varphi(g)$ (resp.~$\varphi(m)$) be any two sorted first-order formula in $\mathcal{L}^1$, then $\varphi(g)$ (resp.~$\varphi(m)$) is preserved (resp.~reflected) by  simulation if and only if there exist finite many $\mathcal{L}$-formulas $\psi_1$, $\psi_2$, $\cdots$, $\psi_n$ such that $\varphi(g)$ (resp.~$\varphi(m)$) is equivalent to $\bigvee_{1\leq i\leq n} ST_g(\psi_i)$ (resp.~$\bigvee_{1\leq i\leq n}  ST_m(\psi_i)$).
\end{theorem}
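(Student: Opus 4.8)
The plan is to prove both implications, treating the $g$-sort (preservation) statement in detail; the $m$-sort (reflection) statement is entirely analogous, using item~2 of Theorem~\ref{thm:simulation invariance} and item~2 of Theorem~\ref{thm:Detour Lemma} in place of the corresponding item~1's. For the easy direction, suppose $\varphi(g)$ is equivalent to $\bigvee_{1 \le i \le n} ST_g(\psi_i)$. Given any simulation $(S,T)$ from $\mathfrak{M}_1$ to $\mathfrak{M}_2$ with $a_1 S a_2$ and $\mathfrak{M}_1 \vDash \varphi(g)[a_1]$, Lemma~\ref{lem:standard translation} turns $\mathfrak{M}_1 \vDash ST_g(\psi_i)[a_1]$ into $\mathfrak{M}_1, a_1 \Vdash \psi_i$ for some $i$; Theorem~\ref{thm:simulation invariance} propagates this to $\mathfrak{M}_2, a_2 \Vdash \psi_i$, and Lemma~\ref{lem:standard translation} gives $\mathfrak{M}_2 \vDash \varphi(g)[a_2]$. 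Hence $\varphi(g)$ is preserved by simulations.

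For the hard direction, assume $\varphi(g)$ is preserved by simulations, and let $\Phi := \{ST_g(\psi) \mid \psi \in \mathcal{L},\ ST_g(\psi) \vDash \varphi(g)\}$ be the set of translations entailing $\varphi(g)$. I would first establish $\varphi(g) \vDash \bigvee \Phi$, i.e.\ that every pointed $\mathrm{LE}$-model satisfying $\varphi(g)$ satisfies some member of $\Phi$; then $\{\varphi(g)\} \cup \{\neg\sigma \mid \sigma \in \Phi\}$ is unsatisfiable, so by compactness of $\mathcal{L}^1$ there are finitely many $\sigma_1, \dots, \sigma_n \in \Phi$ with $\varphi(g) \vDash \sigma_1 \vee \cdots \vee \sigma_n$. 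Since each $\sigma_i \vDash \varphi(g)$ by construction, we obtain $\varphi(g) \equiv \bigvee_i \sigma_i = \bigvee_i ST_g(\psi_i)$, as required.

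The crux is therefore the entailment $\varphi(g) \vDash \bigvee \Phi$. Fix $(\mathfrak{M}, a) \vDash \varphi(g)$ and let $\Gamma := \{\psi \in \mathcal{L} \mid \mathfrak{M}, a \Vdash \psi\}$ be its modal $\Vdash$-type. I claim $\{ST_g(\psi) \mid \psi \in \Gamma\} \vDash \varphi(g)$: indeed, any $(\mathfrak{N}, b)$ satisfying every $ST_g(\psi)$ with $\psi \in \Gamma$ satisfies, by Lemma~\ref{lem:standard translation}, $\mathfrak{N}, b \Vdash \psi$ for all $\psi \in \Gamma$, i.e.\ $\mathfrak{M}, a \rightsquigarrow_A \mathfrak{N}, b$. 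The key step — and the main obstacle — is to upgrade ``preservation under genuine simulations'' to ``preservation along $\rightsquigarrow_A$'', which is precisely what Theorem~\ref{thm:Detour Lemma} delivers, since it produces $\omega$-saturated elementary extensions in which $\rightsquigarrow_A$ is witnessed by an actual simulation $\rightrightarrows$; applying elementarity, the preservation hypothesis, and elementarity again yields $(\mathfrak{N}, b) \vDash \varphi(g)$. Finally, compactness gives finitely many $\psi_1, \dots, \psi_k \in \Gamma$ with $\bigwedge_j ST_g(\psi_j) \vDash \varphi(g)$; since $ST_g$ commutes with $\wedge$ by definition and $\mathfrak{M}, a \Vdash \psi_1 \wedge \cdots \wedge \psi_k$, the formula $ST_g(\psi_1 \wedge \cdots \wedge \psi_k)$ is a member of $\Phi$ true at $(\mathfrak{M}, a)$. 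This establishes $\varphi(g) \vDash \bigvee \Phi$ and completes the argument.
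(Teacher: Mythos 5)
Your proposal is correct, but it decomposes the hard direction differently from the paper. The paper works with \emph{consequences} of $\varphi(g)$: it defines $\mathrm{JMOC}(\varphi)$, the set of finite disjunctions $\bigvee_i ST_g(\psi_i)$ entailed by $\varphi(g)$, and proves $\mathrm{JMOC}(\varphi)\vDash\varphi(g)$ by taking an arbitrary model $(\mathfrak{M},a)$ of $\mathrm{JMOC}(\varphi)$, using compactness to build an auxiliary model $(\mathfrak{N},a')$ of $\{\varphi(g)\}\cup\mathrm{NT}(g)$, where $\mathrm{NT}(g)$ consists of \emph{negated} translations of the formulas failing at $a$, so that $\mathfrak{N},a'\rightsquigarrow_A\mathfrak{M},a$; the Detour Lemma then transfers $\varphi$ from the constructed model into the given one, and a final compactness step plus distributing $\bigwedge\bigvee$ into $\bigvee\bigwedge$ (using that $ST_g$ commutes with $\wedge$) extracts the finite disjunction. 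You instead work with \emph{implicants}: your $\Phi$ collects single translations entailing $\varphi(g)$, and you show every model of $\varphi(g)$ satisfies some member of $\Phi$ by taking its positive modal type $\Gamma$, proving $ST_g(\Gamma)\vDash\varphi(g)$ via the same Detour-Lemma transfer --- with the simulation now running \emph{from} the given model of $\varphi$ \emph{into} the arbitrary model of $ST_g(\Gamma)$, which still matches the preservation hypothesis, so the step is sound --- and then compressing by compactness and $\wedge$-commutation to a single member of $\Phi$; the outer compactness argument yields the finite disjunction directly. The two routes use identical ingredients (compactness for the elementary class of $\mathrm{LE}$-models, Theorem~\ref{thm:Detour Lemma}, and commutation of $ST_g$ with $\wedge$), so neither is more general; yours avoids the negated-translation set $\mathrm{NT}(g)$ and the final distributivity manipulation, so the disjunctive normal form falls out of compactness for free, while the paper's version is the direct analogue of the classical modal-consequence proof of van Benthem's theorem. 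One shared caveat: both arguments implicitly read all entailments $\vDash$ relative to $\mathrm{LE}$-models, which is legitimate precisely because $I$-compatibility and Galois-stability are first-order conditions; it would be worth saying this explicitly.
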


\begin{proof}
We only provide the proof for $\varphi(g)$, the  the proof for $\varphi(m)$ is similar. The implication from right to left follows from Theorem  \ref{thm:simulation invariance}. For the implication from left to right, assume that $\varphi(g)$ is preserved by simulation. We define a set of two sorted first-order formulas $\mathrm{JMOC}(\varphi):=\{\bigvee_{1\leq i\leq n}ST_g(\psi_i)\mid\psi_i\in\mathcal{L}, n\in\omega, \varphi(g)\vDash\bigvee_{1\leq i\leq n}ST_g(\psi_i)\}$. We first check that the following is true: if $\mathrm{JMOC}(\varphi)\vDash\varphi(g)$, then $\varphi(g)$ is equivalent to $\bigvee_j ST_g(\psi_j)$ for some finite
set of $\mathcal{L}$-formulas $\{\psi_j \mid j \in \mathcal{J}\}$. Assume that $\mathrm{JMOC}(\varphi)\vDash\varphi(g)$. By compactness, there exists an finite set $\{\bigvee_{1\leq i\leq n}ST_g(\psi_{j,i})\mid j\in \mathcal{J}, n\in\omega\}\subseteq\mathrm{JMOC}(\varphi)$ such that $\vDash\bigwedge_{j\in\mathcal{J}}\bigvee_{1\leq i\leq n}ST_g(\psi_{j,i})\rightarrow\varphi(g)$. Therefore, we have $\vDash\varphi(g)\leftrightarrow\bigwedge_{j\in\mathcal{J}}\bigvee_{1\leq i\leq n}ST_g(\psi_{j,i})$.  Note that $\bigwedge_{j\in\mathcal{J}}\bigvee_{1\leq i\leq n}ST_g(\psi_{j,i})$ is equivalent to $\bigvee_{1\leq i\leq n}\bigwedge_{j\in\mathcal{J}}ST_g(\psi_{j,i})$, and the latter is equivalent to $\bigvee_{1\leq i\leq n}ST_g(\bigwedge_{j\in\mathcal{J}}\psi_{j,i})$.

Therefore, it suffices to show that $\mathrm{JMOC}(\varphi)\vDash\varphi(g)$. Let $\mathfrak{M}$ be an $\mathrm{LE}$-model such that $\mathfrak{M}\vDash\mathrm{JMOC}(\varphi)[a]$. Let $\mathrm{NT}(g):= \{\neg ST_g(\psi) \mid \psi\in\mathcal{L}, \mathfrak{M}\nvDash ST_g(\psi)[a]\}$. We want to show that $\mathrm{NT}(g) \cup \{\varphi(g)\}$ is consistent. Suppose not. Then, by compactness there exists a finite subset $\mathrm{NT}_0(g)=\{\neg ST_g(\psi_k)\mid k\in\mathcal{K}\}$ of $\mathrm{NT}(g)$ such that $\mathrm{NT}_0(g) \cup \{\varphi(g)\}$ is inconsistent. Therefore, $\varphi(g)\rightarrow \neg \bigwedge_{k\in\mathcal{K}}\neg ST_g(\psi_i) $ is valid, which means that $\varphi(g)\rightarrow \bigvee_{k\in\mathcal{K}}  ST_g(\psi_i)$ is valid. Therefore, $\bigvee_{k\in\mathcal{K}}  ST_g(\psi_i)\in\mathrm{JMOC}(\varphi)$, which implies $\mathfrak{M}\vDash \bigvee_{k\in\mathcal{K}}  ST_g(\psi_i)[a]$. However, $\mathfrak{M}\nvDash ST_g(\psi_i)[a]$ for any $k\in\mathcal{K}$, which implies $\mathfrak{M}\nvDash \bigvee_{k\in\mathcal{K}}  ST_g(\psi_i)[a]$, so there is a contradiction. Hence, $\mathrm{NT}(g) \cup \{\varphi(g)\}$ must be consistent. Therefore, there exists an $\mathcal{L}^1$-structure $\mathfrak{N}$ and $a'$ in $\mathfrak{N}$ such that $\mathfrak{N}\vDash\mathrm{NT}(g)\cup\{\varphi(g)\}[a']$. 

For any formula $\psi$, $\mathfrak{M}, a\nVdash\psi$ iff $\neg ST_g(\psi)\in NT(g)$, which implies $\mathfrak{N}, a'\nVdash\psi$. Therefore, $\mathfrak{N}, a'\rightsquigarrow_A \mathfrak{M}, a$. By Theorem \ref{thm:Detour Lemma} (Detour Lemma), there exist  two $\omega$-saturated $\mathrm{LE}$-models $\mathfrak{N}^\omega$ and $\mathfrak{M}^\omega$, and two elementary embeddings $\omega_n:\mathfrak{N}\rightarrow\mathfrak{N}^\omega$ and $\omega_m:\mathfrak{M}\rightarrow\mathfrak{M}^\omega$, such that $\mathfrak{N}^\omega, \omega_n(a')\rightrightarrows\mathfrak{M}^\omega, \omega_m(a)$. As $\omega_n$ is an elementary embedding and $\mathfrak{N}\vDash\varphi(g)[a']$, we have $\mathfrak{N}^\omega\vDash\varphi(g)[\omega_n(a')]$. Therefore, as $\varphi(g)$ is preserved by simulation, $\mathfrak{M}^\omega\vDash\varphi(g)[\omega_m(a)]$. As $\omega_m$ is an elementary embedding, we have $\mathfrak{M}\vDash\varphi(g)[a]$. This concludes the proof.
\end{proof}

\section{Conclusion}\label{sec:Conclusion}

In this paper, we introduced bisimulations on polarity-based semantics for non-distributive modal logic, demonstrating their application in proving the Hennessy-Milner and van Benthem theorems. These results show that the non-distributive modal logic is bisimulation invariant fragment (within a specific signature and axioms) of two-sorted first-order logic. This work suggests the following directions for future research.

\textbf{Characterizing theorem for lattice-based modal $\mu$-calculus:} Lattice based modal  $\mu$-calculus, i.e.~the extension of non-distributive modal logic with least and greatest fixed point operators, was defined and studied in \cite{ding2023game}. In \cite{ding2023game}, it was showed that the formulas of  modal $\mu$-calculus  are bisimulation-invariant. The classical modal $\mu$-calculus is characterized as the  bisimulation invariant fragment of monadic second-order logic. It would be interesting to see if a similar characterization holds in its lattice-based generalization.

\textbf{Computational properties of bisimulations:}  In classical modal logic, computational complexity of different bisimulation related problems like checking if two models are bisimlar, computing the largest bisimulation on a model have been studied extensively \cite{paige1987three}. It would be interesting to study similar complexity problems in a non-distributive setting and compare the results with the classical setting. 

\textbf{Bisimulations and network analysis:}  In Social Network Analysis (SNA), different notions of equivalences between nodes are defined  to  study  the structural   nodes in one or two social networks  \cite{lorrain1971structural,FAN201466}. On of the most prominent structural equivalences used in SNA is the notion of {\em regular equivalence}, which is analogous to  bisimulations on (multi-modal) Kripke frames \cite{WHITE1983193,MARX200351}. Enriched formal contexts (polarities) can be seen as bi-partite networks; so, it would be interesting to study if simulations and bisimulations defined in this paper can be useful in bi-partite network analysis.

\Appendix

\section{Proofs}

In this appendix, we show the proofs of the results stated throughout the paper.

\subsection{Proof of Theorem \ref{thm:simulation invariance}}\label{Proof of thm:simulation invariance}

\begin{proof}
    We give a proof  by induction on the complexity of the formula $\varphi$. Without loss of generality, let $(S, T)$ be a simulation from $\mathfrak{M}_1$ to $\mathfrak{M}_2$. The initial  step is when $\varphi\in\atprop$ is a propositional variable. For the first item, assume $a_1Sa_2$, then $a_1\rightsquigarrow_A a_2$ follows directly from the item 1 of Definition \ref{def:simulation and bisimulation}. For the second item, assume $x_1Ta_2$, then $x_2\rightsquigarrow_X x_1$ follows directly from the item 2 of Definition \ref{def:simulation and bisimulation}. Now, we consider induction step for all the connectives.

   (1) Suppose $\varphi$ is $\psi_1\wedge\psi_2$.
   
   To prove item 1, suppose  $a_1Sa_2$, and  $\mathfrak{M}_1, a_1\Vdash\psi_1\wedge\psi_2$. Then,   $\mathfrak{M}_1, a_1\Vdash\psi_1$ and $\mathfrak{M}_1, a_1\Vdash\psi_2$, which by induction  hypothesis implies $\mathfrak{M}_2, a_2\Vdash\psi_1$ and $\mathfrak{M}_2, a_2\Vdash\psi_2$, which implies $\mathfrak{M}, a_2\Vdash\psi_1\wedge\psi_2$. 
    
    To prove item 2, suppose  $x_1Tx_2$ and $\mathfrak{M}_1, x_1\nsucc\psi_1\wedge\psi_2$.  Then, there exists $a_1\in A_1$ such that $a_1I^c_1x_1$ and $\mathfrak{M}_1, a_1\Vdash\psi_1\wedge\psi_2$. Then, by item 4 of  Definition \ref{def:simulation and bisimulation}, there exists $a_2\in A_2$ such that $a_2I^c_2x_2$ and $a_1Sa_2$. Therefore, $\mathfrak{M}_2, a_2\Vdash\psi_1\wedge\psi_2$, which implies  $\mathfrak{M}_2, x_2\nsucc\psi_1\wedge\psi_2$.

(2)  Suppose  $\varphi$ is $\psi_1\vee\psi_2$. The proof is dual to the previous case. 

    (3)  Suppose  $\varphi=\Box\psi$.
    
    To prove item 1, assume $a_1Sa_2$, and $\mathfrak{M}_2, a_2\nVdash\Box\psi$. Then,  there exists $x_2\in X_2$ such that $\mathfrak{M}_2, x_2\succ\psi$ and $a_2R^c_{\Box_2}x_2$.  By item 5 Definition \ref{def:simulation and bisimulation}, there exists $x_1\in X_1$ such that $a_1R^c_{\Box_1}x_1$, and $x_1Tx_2$. Therefore,  by induction hypothesis $\mathfrak{M}_1, x_1\succ\psi$. Hence,  $\mathfrak{M}_1, a_1\nsucc\Box\psi$. 
    
    To prove item 2,  suppose  $x_1Tx_2$, and  $\mathfrak{M}_1, x_1\nsucc\Box\psi$. Then,  there exists $a_1\in A_1$ such that $a_1I^c_1 x_1$ and $\mathfrak{M}_1, a_1\Vdash\Box\psi$. By item 4 of  Definition \ref{def:simulation and bisimulation}, there exists $a_2\in A_2$ such that $a_2I^c_2 x_2$, and $a_1Sa_2$. Therefore, $\mathfrak{M}_2, a_2\Vdash\Box\psi$, which implies $\mathfrak{M}_2, x_2\nsucc\Box\psi$.

    (4)  Suppose  $\varphi$ is $\Diamond \psi$. In this case, the proof is dual to the previous case. 
    This concludes the proof.

    
\end{proof}

\subsection{Proof of Theorem \ref{Thm:M-saturated is HM}} \label{proof of M-saturated is HM}

\begin{proof}
    Let $\mathfrak{M}_1$ and $\mathfrak{M}_2$ be two $\mathrm{M}$-saturated models, and let $a_1, x_1 \in \mathfrak{M}_1$ and $a_2, x_2 \in \mathfrak{M}_2$. We only prove the first item of Definition \ref{def: HMP}, and the proof of the second item is similar. Assume that $a_1\rightsquigarrow_A a_2$, we claim that $(\rightsquigarrow_A, \leftsquigarrow_X)$ is a simulation from $\mathfrak{M}_1$ to $\mathfrak{M}_2$, where $(a, a')\in\rightsquigarrow_A$ (denoted as $a\rightsquigarrow_A a'$) iff $\mathfrak{M}_1, a\rightsquigarrow_A \mathfrak{M}_2, a'$ and $(x, x')\in\leftsquigarrow_X$ (denoted as $x\leftsquigarrow_X x'$) iff $\mathfrak{M}_1, x\leftsquigarrow_X \mathfrak{M}_2, x'$. Hence, it is sufficient to show that  $(\rightsquigarrow_A, \leftsquigarrow_X)$ satisfies item  1-6 of Definition \ref{def:simulation and bisimulation}. 
    
    Items  1 and 2 are easy to check. To prove item 3, assume that $a_1'\rightsquigarrow_A a_2'$ and $a_2'I^c_2 x_2'$. Let $\Sigma$ be the set of all the formulas satisfied  at $x_2'$. Therefore, for any finite subset  $\Delta$  of $\Sigma$,
    $\mathfrak{M}_2, x_2'\succ\bigvee\Delta$. Therefore, $\mathfrak{M}_2, a_2'\nVdash\bigvee\Delta$, which by $a_1'\rightsquigarrow_A a_2'$ implies $\mathfrak{M}_1, a_1'\nVdash\bigvee\Delta$. So, there exists $x_1'$, such that $\mathfrak{M}_1, x_1'\succ\bigvee\Delta$ and $a_1' I^c_1 x_1'$. Hence, $\Sigma$ is finitely satisfiable in $\{x_1'\mid a_1'I^c_1 x_1'\}$.  Therefore, by $\mathrm{M}$-saturation of $\mathfrak{M}_1$, $\Sigma$ is satisfiable in $\{x_1'\mid a_1'I^c_1 x_1'\}$. Therefore, there exists $x_\Sigma$ such that $a_1'I^c_1 x_\Sigma$ and all formulas in $\Sigma$ are satisfied at $x_\Sigma$ which means $x_\Sigma\leftsquigarrow_X x_2'$. The item  4 is proved similarly to item 3.

    
    
    For item 5, assume that $a_1'\rightsquigarrow_A a_2'$ and $a_2'R^c_{\Box_2} x_2'$. Let $\Sigma$ be the set of all the formulas satisfied at $x_2'$. Therefore, for any finite subset $\Delta$ of $\Sigma$, $\mathfrak{M}_2, x_2'\succ\bigvee\Delta$. Therefore $\mathfrak{M}_2, a_2'\nVdash\Box(\bigvee\Delta)$, which by $a_1'\rightsquigarrow_A a_2'$ implies $\mathfrak{M}_1, a_1'\nVdash\Box(\bigvee\Delta)$. So, there exists $x_1'$, such that $\mathfrak{M}_1, x_1'\succ\bigvee\Delta$ and $a_1' R^c_{\Box_1} x_1'$. Hence, $\Sigma$ is finitely satisfiable in $\{x_1'\mid a_1'R^c_{\Box_1} x_1'\}$. Therefore, by $\mathrm{M}$-saturation of $\mathfrak{M}_1$, $\Sigma$ is satisfiable in $\{x_1'\mid a_1'R^c_{\Box_1} x_1'\}$. Therefore, there exists $x_\Sigma$ such that $a_1'R^c_{\Box_1} x_\Sigma$ and all formulas in $\Sigma$ are satisfied at $x_\Sigma$, which means $x_\Sigma\leftsquigarrow_X x_2'$. The item 6 is proved similarly to item 5. This concludes the proof.
\end{proof}

\subsection{Proof of Lemma \ref{lem:extension_M-saturaed}} \label{appendix:Proof filter-ideal}
\begin{proof}
Let $\mathfrak{M}$ be an $\mathrm{LE}$-model and $\mathfrak{M}^{\mathsf{FI}}$ be its filter-ideal extension. We only prove items 1 and 3 of Definition \ref{def:M-saturation} hold for $\mathfrak{M}^{\mathsf{FI}}$. The proofs for items 2 and 4 can be obtained dually. 

1.~Let $F$ be any filter of $\mathfrak{F}^+$. Let $\Sigma$ be a set of $\mathcal{L}$-formulas which is finitely satisfiable in $\{J \mid F I^c J\}$.
By definition of filter-ideal extension this set is same as  $\{J \mid F \cap J= \emptyset\}$. By Lemma \ref{lem:Filter-ideal satisfaction},  for any formula $\varphi$,  $\mathfrak{M}^{\mathsf{FI}}, J \succ \varphi$ iff $(\val{\varphi},\descr{\varphi}) \in J$. Thus, $\Sigma$ is finitely satisfiable in
$\{J \mid F I^c J\}$ iff for any finite $\Delta \subseteq \Sigma$,  the ideal  generated by the set $\{(\val{\varphi}, \descr{\varphi}) \mid \varphi \in \Delta\}$  (denoted by $Id(\Delta)$) is disjoint with $F$. 
 Let 
\[Y=\bigcup \{ Id(\Delta) \mid 
\Delta \subseteq \Sigma, \, \Delta \,  \mbox{is finite} \}. 
\]
It is clear that $F \cap Y= \emptyset$ and $(\val{\varphi}, \descr{\varphi}) \in Y$ for all $\varphi \in \Sigma$. We claim that $Y$ is an ideal of $\mathfrak{F}^+$. Suppose $c \in Y$, and $d \leq c$. By definition of $Y$,  $c$ must be in $Id(\Delta)$ for some finite $\Delta \subseteq \Sigma$. Then, we must have $d \in Id(\Delta) \subseteq Y$. Finally, suppose $c_1,c_2 \in Y$. Then, we must have finite $\Delta_1,\Delta_2 \subseteq \Sigma$ such that,  $c_1 \in Id(\Delta_1)$ and $c_2 \in Id(\Delta_2)$. Therefore, $c_1,c_2 \in Id(\Delta_1 \cup \Delta_2)$ implying $c_1 \vee c_2 \in Id(\Delta_1 \cup \Delta_2)$. As $\Delta_1 \cup \Delta_2 \subseteq \Sigma$ is finite, we must have $Id(\Delta_1 \cup \Delta_2) \subseteq Y$. Thus, we have $c_1 \vee c_2 \in Y$. So, $Y$ is an ideal such that $F I^c Y$ and $\mathfrak{M}^{\mathsf{FI}}, Y\succ\Sigma$. Hence proved. 

3.~Let $\mathfrak{M}=(\mathfrak{F},V)$ and $F$ be any filter of $\mathfrak{F}^+$. Let $\Sigma$ be a set of formulas which is finitely satisfiable in $\{J \mid F R_\Box^c J\}$.
By definition of filter-ideal extension, this set is same as  $\{J \mid F \cap  \Box J = \emptyset\}$, where for any $C \subseteq \mathfrak{F}^+$, $\Box C= \{\Box c \mid c \in C\}$. By Lemma \ref{lem:Filter-ideal satisfaction},  for any formula $\varphi$,  $\mathfrak{M}^{\mathsf{FI}}, J\succ \varphi$ iff $(\val{\varphi},\descr{\varphi}) \in J$. Thus, $\Sigma$ is finitely satisfiable in
$\{J \mid F  R_\Box^c J\}$ iff for any finite $\Delta \subseteq \Sigma$, the ideal  generated by the set $\{(\val{\varphi}, \descr{\varphi}) \mid \varphi \in \Delta\}$  (denoted by $Id(\Delta)$) satisfies the condition $\Box Id(\Delta) \cap F =\emptyset $. Let 
\[Y=\bigcup \{ Id(\Delta) \mid 
\Delta \subseteq \Sigma, \, \Delta \,  \mbox{is finite} \}.
\]
It is clear that $F \cap \Box Y= \emptyset$ and $(\val{\varphi}, \descr{\varphi}) \in Y$ for all $\varphi \in \Sigma$. By same proof as in item 1, we have that $Y$ is an ideal. Therefore, $Y$ is an ideal such that $F R_\Box^c Y$ and $\mathfrak{M}^{\mathsf{FI}}, Y \succ \Sigma$. Hence proved.
\end{proof}
\bibliographystyle{aiml}
\bibliography{aiml}

\end{document}